\setlist[enumerate,1]{(a)}
\setlist{nosep}
\theoremstyle{definition}
\newtheorem{definition}{Definition}[section]
\newtheorem{notation}[definition]{Notation}
\newtheorem{construction}[definition]{Construction}
\theoremstyle{plain}
\newtheorem{prop}[definition]{Proposition}
\newtheorem{lemma}[definition]{Lemma}
\newtheorem{theorem}[definition]{Theorem}
\newtheorem{cor}[definition]{Corollary}
\theoremstyle{remark}
\newtheorem{remark}[definition]{Remark}
\newcommand{\Z}{\mathbb{Z}}
\newcommand{\cB}{\mathcal{B}}
\newcommand{\cC}{\mathcal{C}}
\newcommand{\Cat}{\mathcal{C}\mathnormal{at}}
\newcommand{\cD}{\mathcal{D}}
\newcommand{\Spec}{\mathrm{Spec}}
\newcommand{\End}{\mathrm{End}}
\newcommand{\Hom}{\mathrm{Hom}}
\newcommand{\id}{\mathrm{id}}
\newcommand{\co}{\mathrm{co}}
\newcommand{\adj}{\mathrm{adj}}
\newcommand{\cc}{\mathrm{cc}}
\newcommand{\spe}{\mathrm{sp}}
\newcommand{\ev}{\mathrm{ev}}
\newcommand{\coev}{\mathrm{coev}}
\newcommand{\ft}{{\mathrm{ft}}}
\newcommand{\cft}{{c\mathrm{ft}}}
\newcommand{\tr}{\mathrm{tr}}
\newcommand{\cHom}{\mathcal{H}\mathnormal{om}}
\newcommand{\vecl}[1]{\overleftarrow{#1}}
\newcommand{\vecr}[1]{\overrightarrow{#1}}
\newcommand{\atimes}{\stackbin{\leftarrow}{\times}}
\newcommand{\ttimes}{\mathbin{\bar{\times}}}
\newcommand{\aPsi}{\overleftarrow{\Psi}}
\numberwithin{equation}{section}
\begin{document}
\title{Categorical traces and a relative Lefschetz-Verdier formula}
\author{Qing Lu\thanks{School of Mathematical Sciences, Beijing Normal University, Beijing
100875, China; email: \texttt{qlu@bnu.edu.cn}.} \and Weizhe
Zheng\thanks{Morningside Center of Mathematics, Academy of Mathematics and
Systems Science, Chinese Academy of Sciences, Beijing 100190, China;
University of the Chinese Academy of Sciences, Beijing 100049, China; email:
\texttt{wzheng@math.ac.cn}.}}
\date{}
\maketitle

\begin{abstract}
We prove a relative Lefschetz-Verdier theorem for locally acyclic objects
over a Noetherian base scheme. This is done by studying duals and traces
in the symmetric monoidal $2$-category of cohomological correspondences.
We show that local acyclicity is equivalent to dualizability and deduce
that duality preserves local acyclicity. As another application of the
category of cohomological correspondences, we show that the nearby cycle
functor over a Henselian valuation ring preserves duals, generalizing a
theorem of Gabber.
\end{abstract}

\section*{Introduction}
The notions of dual and trace in symmetric monoidal categories were
introduced by Dold and Puppe \cite{DP}. They have been extended to higher
categories and have found important applications in algebraic geometry and
other contexts (see \cite{BN} by Ben-Zvi and Nadler and the references
therein).

The goal of the present paper is to record several applications of the
formalism of duals and traces to the symmetric monoidal $2$-category of
cohomological correspondences in \'etale cohomology. One of our main results
is the following relative Lefschetz-Verdier theorem.

\begin{theorem}\label{t.lv0}
Let $S$ be a Noetherian scheme and let $\Lambda$ be a Noetherian commutative
ring with $m\Lambda=0$ for some $m$ invertible on $S$. Let
\[\xymatrix{X\ar[d]_{f} & C\ar[l]_{\vecl{c}}\ar[d]^{p}\ar[r]^{\vecr{c}} & Y\ar[d]^{g} & D\ar[l]_{\vecl{d}}\ar[r]^{\vecr{d}}\ar[d]^{q} &
X\ar[d]^{f}\\
X' & C'\ar[l]\ar[r] & Y' & D'\ar[l]\ar[r] & X'}
\]
be a commutative diagram of schemes separated of finite type over $S$, with
$p$ and $D\to D'\times_{Y'} Y$ proper. Let $L\in D_{\cft}(X,\Lambda)$ such
that $L$ and $f_!L$ are locally acyclic over $S$. Let $M\in D(Y,\Lambda)$,
$u\colon \vecl{c}^*L\to \vecr{c}^! M$, $v\colon \vecl{d}^*M\to \vecr{d}^!
L$. Then $s\colon C\times_{X\times_S Y} D\to C'\times_{X'\times_S Y'} D'$ is
proper and
\[s_*\langle u,v\rangle = \langle (f,p,g)_!u,(g,q,f)_!v\rangle.\]
\end{theorem}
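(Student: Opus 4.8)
The plan is to recognize the Lefschetz--Verdier pairing $\langle u,v\rangle$ as a categorical trace in the symmetric monoidal $2$-category $\cC$ of cohomological correspondences over $S$, and then to deduce the formula from the functoriality of traces under proper pushforward.

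First I would work in $\cC$, whose objects are pairs $(X,L)$ with $X$ separated of finite type over $S$ and $L\in D(X,\Lambda)$, whose $1$-morphisms $(X,L)\to(Y,M)$ are spans $X\xleftarrow{\vecl{c}}C\xrightarrow{\vecr{c}}Y$ equipped with a cohomological correspondence $\vecl{c}^*L\to\vecr{c}^!M$, with composition given by fibre product of spans, and whose monoidal product is $\times_S$ with unit $\mathbf 1=(S,\Lambda)$. Here $u$ and $v$ are $1$-morphisms $(X,L)\to(Y,M)$ and $(Y,M)\to(X,L)$, and the composite $w\colonequals v\circ u$ is a $1$-endomorphism of $(X,L)$ carried by the span $C\times_Y D$. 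Since $L\in D_{\cft}(X,\Lambda)$ is locally acyclic over $S$, the object $(X,L)$ is dualizable by the equivalence between local acyclicity and dualizability, with evaluation and coevaluation $1$-morphisms $\ev,\coev$ supported on the diagonal of $X$. Its trace $\tr(w)\in\End_{\cC}(\mathbf 1)$, the usual composite of $w$ with $\ev$ and $\coev$, is therefore well defined; unwinding the diagonal fibre products shows that it is a cohomological correspondence of the point supported on $C\times_{X\times_S Y}D$, and that it coincides, up to the standard identifications, with $\langle u,v\rangle$. The same discussion downstairs identifies $\langle (f,p,g)_!u,(g,q,f)_!v\rangle$ with $\tr(w')$, where $w'\colonequals(g,q,f)_!v\circ(f,p,g)_!u$ is a $1$-endomorphism of $(X',f_!L)$; this object is dualizable because $f_!L$ is locally acyclic over $S$ by hypothesis.

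Next I would assemble the vertical maps into a single pushforward functor and record its compatibility with the monoidal and duality structures. The data $f,g,p,q$ together with the commutativity of the diagram induce a pushforward carrying $w$ to $w'$, given by $f_!,g_!$ on objects and by proper/arbitrary pushforward of cohomological correspondences on $1$-morphisms. By the Künneth formula $(f\times g)_!(L\boxtimes M)\simeq f_!L\boxtimes g_!M$ this pushforward is symmetric monoidal for $\times_S$. The crucial point is that it transports the duality datum of $(X,L)$ to that of $(X',f_!L)$: since both $L$ and $f_!L$ are locally acyclic, duality is compatible with pushforward (this uses that duality preserves local acyclicity), so $\ev$ and $\coev$ upstairs push forward to the corresponding $1$-morphisms downstairs. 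This is exactly where the properness hypotheses enter, through proper base change and the identifications $f_!=f_*$ on the relevant proper loci; together with the properness of $p$ and of $D\to D'\times_{Y'}Y$ they also show that the induced map $s\colon C\times_{X\times_S Y}D\to C'\times_{X'\times_S Y'}D'$ is proper, so that $s_*=s_!$.

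Finally, I would invoke the standard fact that a symmetric monoidal functor preserving the duality data preserves traces: applied to the pushforward and to the endomorphism $w$, it yields $s_!\tr(w)=\tr(w')$, which under the identifications above is precisely
\[s_*\langle u,v\rangle=\langle (f,p,g)_!u,(g,q,f)_!v\rangle.\]
The main obstacle is the middle step, namely verifying at the $2$-categorical level that proper pushforward is compatible with the evaluation and coevaluation $1$-morphisms, equivalently that the coherence $2$-cells comparing $f_!$ with the diagonal-supported duality data are isomorphisms. This rests on proper base change around the fibre squares appearing in the diagram and on the interaction between $f_!$, Verdier duality, and local acyclicity, and it is precisely here that each properness condition is used; once this compatibility is established, the identity of traces is formal.
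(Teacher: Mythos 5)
The overall shape of your argument---interpret $\langle u,v\rangle$ as $\tr(v\circ u)$ for the dualizable object $(X,L)$ of $\cC$, use the equivalence of dualizability with local acyclicity for $L$ and $f_!L$ (Theorem \ref{t.dual}), and conclude by functoriality of traces---is exactly the paper's strategy. But there is a genuine gap at the step you yourself flag as the ``main obstacle,'' and the mechanism you propose to close it does not work. First, there is no symmetric monoidal endofunctor of $\cC$ encoded by the vertical maps $f,g,p,q$: these are data attached to one particular diagram, not a natural transformation of the whole formalism, so ``a symmetric monoidal functor preserving duality data preserves traces'' is not the applicable principle. The applicable principle is the functoriality of the trace with respect to squares in $\cC$ whose vertical arrows are \emph{right adjointable} (Construction \ref{c.fun}, the category $\End(\cC)$ of \cite{HSS}); and in $\cC$ the morphism $f_\natural\colon(X,L)\to(X',f_!L)$ is right adjointable precisely when $f$ is proper, since the counit of the would-be adjunction is the map $f\colon X\to X'\times_{X'}X'=X'$, which must be proper to be a $2$-morphism of $\cC$ (Lemma \ref{l.adj}). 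The theorem does not assume $f$ proper, so your plan to push forward $\ev$ and $\coev$ using ``proper base change and $f_!=f_*$ on the relevant proper loci'' has nothing to bite on: $f_!\neq f_*$, and the coevaluation of $(X,L)$, supported on the diagonal of $X$, does not map along $f_\natural$ to the coevaluation of $(X',f_!L)$ in any canonical way.

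The paper closes this gap by a decomposition your proposal is missing. The left half of the diagram is factored vertically into a square with vertical arrows $(f,\id_C,\id_Y)$---legitimate for the $!$-pushforward of Construction \ref{c.push} because $C\to X\times_{X'}C$ is a closed immersion, hence proper, even though $f$ itself is not---followed by a square with vertical arrows $(\id_{X'},p,g)$, where only the properness of $p$ is used. This produces a \emph{down-square equipped with a splitting}: a diagonal morphism $(e,w)\colon(X',f_!L)\to(Y,M)$ with $w=(f,\id_C,\id_Y)_!u$ together with two $2$-cells. The functoriality of pairings is then extended to such split squares (Construction \ref{c.fun2}) by exploiting the symmetry $\langle u,v\rangle\simeq\langle v,u\rangle$ to shuffle the non-adjointable pushforward into a horizontal composition, so that only the identity-vertical-arrow case of Construction \ref{c.fun} is ever invoked. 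Without this device (or an equivalent one), your argument proves the theorem only under the additional hypothesis that $f$ is proper. A secondary point: you invoke ``duality preserves local acyclicity'' as an input, but in the paper this is Corollary \ref{c.la}, itself a consequence of the dualizability criterion; it is not needed for, and should not be assumed in, the proof.
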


Here $D_\cft(X,\Lambda)\subseteq D(X,\Lambda)$ denotes the full subcategory
spanned by objects of finite tor-dimension and of constructible cohomology
sheaves, and $\langle u,v\rangle$ is the relative Lefschetz-Verdier pairing.

\begin{remark}
In the case where $S$ is the spectrum of a field, local acyclicity is
trivial and the theorem generalizes \cite[III Corollaire 4.5]{SGA5} and (the
scheme case of) \cite[Proposition 1.2.5]{Var}. For $S$ smooth over a perfect
field and under additional assumptions of smoothness and transversality,
Theorem \ref{t.lv0} was proved by Yang and Zhao \cite[Corollary 3.10]{YZ}.
The original proof in \cite{SGA5} and its adaptation in \cite{YZ} require
the verification of a large amount of commutative diagrams. The categorical
interpretation we adopt makes our proof arguably more conceptual.
\end{remark}

It was observed by Lurie that Grothendieck's cohomological operations can be
encoded by a (pseudo) functor $\cB\to \Cat$, where $\cB$ denotes the
category of correspondences and $\Cat$ denotes the $2$-category of
categories. Contrary to the situation of \cite[Definition 2.15]{BN}, in the
context of \'etale cohomology, the functor has a \emph{right-lax} symmetric
monoidal structure that is not expected to be symmetric monoidal even after
enhancement to higher categories. Instead, we apply the formalism of traces
to the corresponding cofibered category produced by the Grothendieck
construction, which is the category $\cC$ of cohomological correspondences.
The relative Lefschetz-Verdier formula follows from the functoriality of
traces for dualizable objects $(X,L)$ of $\cC$.

To complete the proof, we show that under the assumption $L\in
D_{\cft}(X,\Lambda)$, dualizability is equivalent to local acyclicity
(Theorem \ref{t.dual}). As a byproduct of this equivalence, we deduce
immediately that local acyclicity is preserved by duality (Corollary
\ref{c.la}). Note that this last statement does not involve cohomological
correspondences.

We also give applications to the nearby cycle functor $\Psi$ over a
Henselian valuation ring. The functor $\Psi$ extends the usual nearby cycle
functor over a Henselian \emph{discrete} valuation ring and was studied by
Huber \cite[Section 4.2]{Huber}. By studying specialization of cohomological
correspondences, we generalize Gabber's theorem that $\Psi$ preserves duals
and a fixed point theorem of Vidal to Henselian valuation rings (Corollaries
\ref{c.Gabber} and \ref{c.Vidal}). We hope that the latter can be used to
study ramification over higher-dimensional bases.

Scholze remarked that our arguments also apply in the \'etale cohomology of
diamonds and imply the equivalence between dualizability and universal local
acyclicity in this situation. This fact and applications are discussed in
his work with Fargues on the geometrization of the Langlands correspondence
\cite{FS}.

Let us briefly mention some other categorical approaches to Lefschetz type
theorems. In \cite[Section~4]{DP}, the Lefschetz fixed point theorem is
deduced from the functoriality of traces by passing to suspension spectra.
In \cite{Petit}, a categorical framework is set up for Lefschetz-Lunts type
formulas. In May 2019, as a first draft of this paper was being written,
Varshavsky informed us that he had a different strategy to deduce the
Lefschetz-Verdier formula, using categorical traces in
$(\infty,2)$-categories.

This paper is organized as follows. In Section \ref{s.1}, we review duals
and traces in symmetric monoidal $2$-categories and the Grothendieck
construction. In Section \ref{s.2}, we define the symmetric monoidal
$2$-category of cohomological correspondences and prove the relative
Lefschetz-Verdier theorem. In Section \ref{s.3}, we discuss applications to
the nearby cycle functor over a Henselian valuation ring.

\subsection*{Acknowledgments}
We learned categorical traces from lectures by Xinwen Zhu and David Nadler.
We would like to thank Ning Guo, Luc Illusie, Yifeng Liu, Fran\c cois Petit,
Takeshi Saito, Peter Scholze, Yakov Varshavsky, Cong Xue, Enlin Yang, and
Xinwen Zhu for discussions and comments. We are grateful for the support of
Princeton University, where part of this work was done during a visit.

\section{Pairings in symmetric monoidal $2$-categories}\label{s.1}
We review duals, traces, and pairings in symmetric monoidal $2$-categories.
We give the definitions in Subsection \ref{ss.1} and discuss the
functoriality of pairings in Subsection \ref{ss.2}. These two subsections
are mostly standard (see \cite{BN} and \cite{HSS} for generalizations to
higher categories). In Subsection \ref{ss.3} we review the Grothendieck
construction in the symmetric monoidal context, which will be used to
interpret the category of cohomological correspondences later.

By a $2$-category, we mean a weak $2$-category (also known as a bicategory
in the literature).

\subsection{Pairings}\label{ss.1}
Let $(\cC,\otimes,1_{\cC})$ be a symmetric monoidal $2$-category.

\begin{definition}[dual]\label{d.dual}
An object $X$ of $\cC$ is \emph{dualizable} if there exist an object
$X\spcheck$ of~$\cC$, called the \emph{dual} of $X$, and morphisms
$\ev_X\colon X\spcheck\otimes X\to 1_\cC$, $\coev_X\colon 1_\cC\to X\otimes
X\spcheck$, called evaluation and coevaluation, respectively, such that the
composites
\[X\xrightarrow{\coev_X\otimes \id_X} X\otimes X\spcheck \otimes X \xrightarrow{\id_X\otimes \ev_X} X,\quad X\spcheck\xrightarrow{\id_{X\spcheck}\otimes \coev_X} X\spcheck \otimes X\otimes X\spcheck \xrightarrow{\ev_X\otimes \id_{X\spcheck}} X\spcheck\]
are isomorphic to identities.
\end{definition}

\begin{remark}\label{r.dual}
For $X$ dualizable, $X\spcheck$ is dualizable of dual $X$. For $X$ and $Y$
dualizable, $X\otimes Y$ is dualizable of dual $X\spcheck\otimes Y\spcheck$.
\end{remark}

For $X$ and $Y$ in $\cC$, we let $\cHom(X,Y)$ denote the internal mapping
object if it exists.

\begin{remark}\label{r.dual2}
Assume that $X$ is dualizable of dual $X\spcheck$.
\begin{enumerate}
\item The morphisms $\coev_X$ and $\ev_X$ exhibit $-\otimes X\spcheck$ as
    right (and left) adjoint to $-\otimes X$. Thus, for every object $Y$,
    $\cHom(X,Y)$ exists and is equivalent to $Y\otimes X\spcheck$. In
    particular, $\cHom(X,1_\cC)$ exists and is equivalent to $X\spcheck$.

\item If, moreover, $\cHom(Y,1_\cC)$ exists, then we have equivalences
\begin{gather*}
\cHom(X\otimes
Y,1_\cC)\simeq \cHom(X,\cHom(Y,1_\cC))\stackrel{\text{(a)}}{\simeq}
\cHom(Y,1_\cC)\otimes \cHom(X,1_\cC),\\
\cHom(Y,X)\simeq \cHom(Y,\cHom(X\spcheck,1_\cC))\simeq \cHom(X\spcheck\otimes Y,1_\cC)\qquad\\
\qquad\simeq \cHom(Y,1_\cC)\otimes\cHom(X\spcheck,1_\cC)\simeq \cHom(Y,1_\cC)\otimes X.
\end{gather*}
\end{enumerate}
\end{remark}

\begin{lemma}\label{l.Hom0}
An object $X$ is dualizable if and only if $\cHom(X,1_\cC)$ and $\cHom(X,X)$
exist and the morphism $m\colon X\otimes \cHom(X,1_\cC)\to \cHom(X,X)$
adjoint to
\[
X\otimes \cHom(X,1_\cC)\otimes X\xrightarrow{\id_X\otimes \ev_X} X
\]
is a split epimorphism. Here $\ev_X\colon \cHom(X,1_\cC)\otimes X\to 1_\cC$
denotes the counit.
\end{lemma}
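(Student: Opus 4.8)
The plan is to treat the two implications separately; the forward one is formal and the real content lies in the converse. For the forward direction, assume $X$ is dualizable with dual $X\spcheck$. By Remark \ref{r.dual2} the objects $\cHom(X,1_\cC)\simeq X\spcheck$ and $\cHom(X,X)\simeq X\otimes X\spcheck$ exist, and under the second equivalence the counit $\epsilon\colon\cHom(X,X)\otimes X\to X$ is identified with $\id_X\otimes\ev_X$. Since $m$ is characterised by $\epsilon\circ(m\otimes\id_X)\simeq\id_X\otimes\ev_X$ and $\id_{X\otimes X\spcheck}$ enjoys the same property, the uniqueness part of the universal property of $\cHom(X,X)$ forces $m\simeq\id_{X\otimes X\spcheck}$; in particular $m$ is a split epimorphism.

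For the converse I would write $X\spcheck:=\cHom(X,1_\cC)$ and let $\epsilon\colon\cHom(X,X)\otimes X\to X$ be the other counit. Let $\eta\colon1_\cC\to\cHom(X,X)$ be the morphism adjoint to $\id_X$, so that $\epsilon\circ(\eta\otimes\id_X)\simeq\id_X$, and recall that $m$ is adjoint to $\id_X\otimes\ev_X$, i.e.\ $\epsilon\circ(m\otimes\id_X)\simeq\id_X\otimes\ev_X$. Fixing a section $s$ of $m$ with $m\circ s\simeq\id$, I would set $\coev_X:=s\circ\eta\colon1_\cC\to X\otimes X\spcheck$ and verify the two triangle identities of Definition \ref{d.dual}. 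The first is a one-line computation: $(\id_X\otimes\ev_X)\circ(\coev_X\otimes\id_X)\simeq\epsilon\circ\bigl((m\circ s\circ\eta)\otimes\id_X\bigr)\simeq\epsilon\circ(\eta\otimes\id_X)\simeq\id_X$, where the middle step uses $m\circ s\simeq\id$.

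The crux, and the step I expect to be the main obstacle, is the second identity $e:=(\ev_X\otimes\id_{X\spcheck})\circ(\id_{X\spcheck}\otimes\coev_X)\simeq\id_{X\spcheck}$, since a single triangle identity does not in general imply the other. The idea is to leverage the universal property of $X\spcheck=\cHom(X,1_\cC)$: the functor $\psi\mapsto\ev_X\circ(\psi\otimes\id_X)$ from $\Hom(X\spcheck,X\spcheck)$ to $\Hom(X\spcheck\otimes X,1_\cC)$ is fully faithful, so it suffices to produce an isomorphism $\ev_X\circ(e\otimes\id_X)\simeq\ev_X$. Expanding $e\otimes\id_X$ via the interchange law presents $\ev_X\circ(e\otimes\id_X)$ as a zig-zag built from one copy of $\coev_X$ and two copies of $\ev_X$; applying the already-proved first triangle identity to $\coev_X$ together with the appropriate cap straightens the diagram down to the single remaining $\ev_X$, as desired. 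Conceptually this is the familiar fact that a counit plus one triangle identity determines the unit and forces the second identity; the only points requiring care are that we may invoke the universal property of the single representing object $\cHom(X,1_\cC)$ but not that of a full internal-hom functor, so the reduction must be run through that object, and that throughout every equality is to be understood up to the coherence isomorphisms of $\cC$.
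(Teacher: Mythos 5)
Your proposal is correct and follows the paper's own route: the forward direction is read off from Remark \ref{r.dual2}, and for the converse you define $\coev_X$ as the composite of the morphism $1_\cC\to\cHom(X,X)$ corresponding to $\id_X$ with a section of $m$, exactly as the paper does. The only difference is that the paper dismisses the verification of the triangle identities as ``easy to see,'' whereas you carry it out --- correctly: the first identity follows from $m\circ s\simeq\id$ and the adjunction defining $m$, and the second is deduced from the first via the interchange law and the full faithfulness of $\psi\mapsto\ev_X\circ(\psi\otimes\id_X)$ coming from the universal property of $\cHom(X,1_\cC)$.
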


\begin{proof}
The ``only if'' part is a special case of Remark \ref{r.dual2}. For the
``if'' part, we define $\coev_X\colon 1_\cC\to X\otimes \cHom(X,1_\cC)$ to
be the composite of a section of $m$ and the morphism $1_\cC\to \cHom(X,X)$
corresponding to $\id_X$. It is easy to see that $\ev_X$ and $\coev_X$
exhibit $\cHom(X,1_\cC)$ as a dual of $X$.
\end{proof}

For $X$ and $Y$ dualizable, the dual of a morphism $u\colon X\to Y$ is the
composite
\[u\spcheck\colon Y\spcheck\xrightarrow{\id_{Y\spcheck}\otimes \coev_X} Y\spcheck \otimes X\otimes X\spcheck \xrightarrow{\id_{Y\spcheck}\otimes u\otimes \id_{X\spcheck}} Y\spcheck \otimes Y\otimes X\spcheck \xrightarrow{\ev_Y\otimes \id_{X\spcheck}} X\spcheck.\]
This construction gives rise to a functor $\Hom_{\cC}(X,Y)\to
\Hom_{\cC}(Y\spcheck,X\spcheck)$. We have commutative squares with
invertible $2$-morphisms
\begin{equation}\label{e.ucheck}
\xymatrix{1_\cC\ar[r]^{\coev_X}\ar[d]_{\coev_Y} & X\otimes X\spcheck \ar[d]^{u\otimes \id}
& X\otimes Y\spcheck\ar[r]^{u\otimes \id}\ar[d]_{\id\otimes u\spcheck} & Y\otimes Y\spcheck\ar[d]^{\ev_Y}\\
Y\otimes Y\spcheck \ar[r]^{\id\otimes u\spcheck} & Y\otimes X\spcheck & X\otimes X\spcheck\ar[r]^{\ev_X} & 1_\cC.}
\end{equation}
Moreover, for $X\xrightarrow{u} Y\xrightarrow{v} Z$ with $X$, $Y$, $Z$
dualizable, we have $(vu)\spcheck\simeq u\spcheck v\spcheck$.

\begin{notation}
We let $\Omega \cC$ denote the category $\End(1_\cC)$.
\end{notation}

\begin{construction}[dimension, trace, and pairing]\label{c.pair}
Let $X$ be a dualizable object of $\cC$ and let $e\colon X\to X$ be an
endomorphism.  We define the trace $\tr(e)$ to be the object of $\Omega \cC$
given by the composite
\[1_\cC\xrightarrow{\coev_X} X\otimes X\spcheck \xrightarrow{e\otimes \id_{X\spcheck}} X\otimes X\spcheck \xrightarrow{\ev_X} 1_\cC,\]
where in the last arrow we used the commutativity constraint.

Let $u\colon X\to Y$ and $v\colon Y\to X$ be morphisms with $X$ dualizable.
We define the pairing by $\langle u,v\rangle=\tr(v\circ u)$.

We define the \emph{dimension} of a dualizable object $X$ to be
$\dim(X)\colonequals \langle \id_X,\id_X\rangle$, which is the composite
$1_\cC\xrightarrow{\coev_X} X\otimes X\spcheck \xrightarrow{\ev_X} 1_\cC$.
\end{construction}

If $X$ and $Y$ are both dualizable, then $\langle u,v\rangle$ is isomorphic
to the composite
\[1_\cC\xrightarrow{\coev_X} X\otimes X\spcheck \xrightarrow{u\otimes v\spcheck} Y\otimes Y\spcheck \xrightarrow{\ev_Y} 1_\cC.\]
In this case, we have an isomorphism $\langle u,v\rangle\simeq \langle
v,u\rangle$. In fact, by \eqref{e.ucheck}, we have commutative squares with
invertible $2$-morphisms
\[\xymatrix{&X\otimes X\spcheck \ar[rd]^{u\otimes \id}&& Y\otimes Y\spcheck \ar[rd]^{\ev_Y}\\
1_\cC\ar[ur]^{\coev_X}\ar[dr]_{\coev_Y} && Y\otimes X\spcheck\ar[rd]_{v\otimes \id}\ar[ru]^{\id\otimes v\spcheck} && 1_\cC.\\
& Y\otimes Y\spcheck\ar[ru]_{\id\otimes u\spcheck} && X\otimes X\spcheck\ar[ru]_{\ev_X}}
\]

The definition and construction above holds in particular for symmetric
monoidal $1$-categories. In the next subsection, $2$-morphisms will play an
important role.

\subsection{Functoriality of pairings}\label{ss.2}
A morphism $f\colon X\to X'$ in a $2$-category is said to be \emph{right
adjointable} if there exist a morphism $f^!\colon X'\to X$, called the right
adjoint of $f$, and $2$-morphisms $\eta\colon \id_X\to f^!\circ f$ and
$\epsilon\colon f\circ f^!\to \id_{X'}$ such that the composites
\[f\xrightarrow{\id \circ \eta} f\circ f^!\circ f \xrightarrow{\epsilon\circ \id} f,\quad f^!\xrightarrow{\eta\circ \id} f^!\circ f\circ f^!\xrightarrow{\id\circ \epsilon} f^!\]
are identities.

Let $(\cC,\otimes,1_{\cC})$ be a symmetric monoidal $2$-category.

\begin{construction}\label{c.fun}
Consider a diagram in $\cC$
\begin{equation}\label{e.twosquare}
\xymatrix{X\ar[r]^u\ar[d]_f & Y\ar[r]^v \ar[d]^g & X\ar[d]^f\\
X'\ar[r]^{u'} & Y'\ar[r]^{v'}\ultwocell\omit{^\alpha} & X'\ultwocell\omit{^\beta}}
\end{equation}
with $X$ and $X'$ dualizable and $f$ right adjointable. We will construct a
morphism $\langle u,v\rangle\to \langle u',v'\rangle$ in $\Omega \cC$.

In the case where $Y$ and $Y'$ are also dualizable and $g$ is also right
adjointable, we define $\langle u,v\rangle\to \langle u',v'\rangle$ by the
diagram
\[\xymatrix{1_\cC\ar[r]^{\coev_X}\ar[rd]_{\coev_{X'}}\drtwocell\omit{<-1>} & X\otimes X\spcheck \ar[rr]^{u\otimes v\spcheck}\ar[d]^{f\otimes {f^!}\spcheck}\drrtwocell\omit{}\ar@{}[rrd]|(.4){\alpha\otimes {\beta^!}\spcheck} && Y\otimes Y\spcheck\ar[rd]^{\ev_Y}\ar[d]_{g\otimes {g^!}\spcheck}\drtwocell\omit{<1>}\\
& X'\otimes {X'}\spcheck \ar[rr]^{u'\otimes {v'}\spcheck} && Y'\otimes {Y'}\spcheck \ar[r]^{\ev_{Y'}} &1_\cC}
\]
where $\beta^!$ is the composite
\[v\circ g^!\xrightarrow{\eta_f} f^!\circ
f\circ v\circ g^!\xrightarrow{\id\circ \beta\circ \id} f^!\circ v'\circ
g\circ g^!\xrightarrow{\epsilon_g} f^!\circ v',
\]
and the $2$-morphisms in the triangles are
\begin{gather}
\label{e.barepsilon}(f\otimes {f^!}\spcheck )\circ \coev_X \simeq ((f\circ f^!)\otimes \id)\circ \coev_{X'} \xrightarrow{\epsilon_f}\coev_{X'},\\
\label{e.bareta}\ev_Y\xrightarrow{\eta_g} \ev_Y\circ ((g^!\circ g)\otimes \id)\simeq \ev_{Y'} \circ (g\otimes {g^!}\spcheck).
\end{gather}
In particular, a morphism $\tr(e)\to \tr(e')$ is defined for every diagram
in $\cC$ of the form
\begin{equation}\label{e.trace2}
\xymatrix{X\ar[r]^e\ar[d]_f & X\ar[d]^f\\
X'\ar[r]^{e'} & X'\ultwocell\omit{^{}}}
\end{equation}
with $X$ and $X'$ dualizable and $f$ right adjointable.

In general, we define $\langle u,v\rangle\to \langle u',v'\rangle$ as the
morphism $\tr(v\circ u)\to \tr(v'\circ u')$ associated to the composite
down-square of \eqref{e.twosquare}.
\end{construction}

Trace can be made into a functor $\End(\cC)\to \Omega\cC$, where $\End(\cC)$
is a $(2,1)$-category whose objects are pairs $(X,e\colon X\to X)$ with $X$
dualizable and morphisms are diagrams \eqref{e.trace2} with $f$ right
adjointable \cite[Section 2.1]{HSS}. Composition in $\End(\cC)$ is given by
vertical composition of diagrams.

For the case of Theorem \ref{t.lv0} where $f$ is not proper, we will need to
relax the adjointability condition in Construction \ref{c.fun} as follows.
In a $2$-category, a \emph{down-square equipped with a splitting} is a
diagram
\begin{equation}\label{e.ds}
\xymatrix{X\ar[r]^u_{\Downarrow}\ar[d]_{f} & Y\ar[d]^{g}\\
X'\ar[r]_{u'}^{\Downarrow}\ar[ru]|w & Y'.}
\end{equation}
Note that the composition of \eqref{e.ds} with a down-square on the left or
on the right is a down-square equipped with a splitting. Moreover, a
down-square with one vertical arrow $f$ right adjointable is equipped with a
splitting induced by the diagram
\[\xymatrix{X\ar[d]_f\ar@{=}[r]_{\eta\Downarrow} & X\ar[d]^f \\
X'\ar@{=}[r]^{\Downarrow\epsilon}\ar[ru]|{f^!} & X'.}
\]

\begin{construction}\label{c.fun2}
Consider a diagram in $\cC$
\begin{equation}\label{e.fun2}
\xymatrix{X\ar[r]^u_{\gamma\Downarrow}\ar[d]_f & Y\ar[r]^v \ar[d]^g & X\ar[d]^f\\
X'\ar[r]_{u'}^{\Downarrow\delta}\ar[ru]|w & Y'\ar[r]_{v'} & X'\ultwocell\omit{^\beta}}
\end{equation}
with $X$ and $X'$ dualizable. We will construct a
morphism $\langle u,v\rangle\to \langle u',v'\rangle$ in $\Omega \cC$.

In the case where $Y$ is also dualizable, we decompose \eqref{e.fun2} into
\[\xymatrix{X\ar[r]^u\ar[d]_f & Y\ar[r]^v\ar@{=}[d]  & X\ar[d]^f\\
X'\ar[r]^w\ar@{=}[d] & Y\ar[d]^g\ar[r]^{fv}\ultwocell\omit{^\gamma} & X' \ar@{=}[d]\ar@{}[ul]|=\\
X'\ar[r]_{u'} & Y'\ar[r]_{v'}\ultwocell\omit{^\delta} & X'\ultwocell\omit{^\beta}}
\]
and take the composite
\[\langle u,v\rangle\simeq \langle v,u\rangle \to \langle fv,w\rangle \simeq \langle w,fv\rangle \to \langle u',v'\rangle.\]
Here the two arrows are given by the case $f=\id$ of Construction
\ref{c.fun}. In particular, a morphism $\tr(e)\to \tr(e')$ is defined for
every diagram in $\cC$ of the form
\[
\xymatrix{X\ar[r]^e_{\Downarrow}\ar[d]_{f} & X\ar[d]^{f}\\
X'\ar[r]_{e'}^{\Downarrow}\ar[ru] & X'}
\]
with $X$ and $X'$ dualizable.

In general, we define $\langle u,v\rangle \to \langle u',v'\rangle$ as the
morphism $\tr(v\circ u)\to \tr(v'\circ u')$ associated to the horizontal
composition of \eqref{e.fun2}.
\end{construction}

\begin{remark}\label{r.functor}
Let $\cC$ and $\cD$ be symmetric monoidal $2$-categories and let $F\colon
\cC\to \cD$ be a symmetric monoidal functor. Then $F$ preserves duals,
pairings, and functoriality of pairings.
\end{remark}

\subsection{The Grothendieck construction}\label{ss.3}
Given a category $B$ and a (pseudo) functor $F\colon B\to \Cat$,
Grothendieck constructed a category cofibered over $B$, whose strict fiber
at an object $X$ of $B$ is $F(X)$ \cite[VI]{SGA1}. We review Grothendieck's
construction in the context of symmetric monoidal $2$-categories. Our
convention on $2$-morphisms is made with applications to categorical
correspondences in mind.

Let $(\cB,\otimes,1_\cB)$ be a symmetric monoidal $2$-category. We consider
the symmetric monoidal $2$-category $(\Cat^\co,\times,*)$, where $\Cat^\co$
denotes the $2$-category obtained from the 2-category $\Cat$ of categories
by reversing the $2$-morphisms, $\times$ denotes the strict product, and $*$
denotes the category with a unique object and a unique morphism.

\begin{construction}\label{c.Groth}
Let $F\colon (\cB,\otimes,1_\cB)\to (\Cat^\co,\times,*)$ be a right-lax
symmetric monoidal functor. We have an object $e_{F}$ of $F(1_\cB)$ and
functors $F(X)\times F(X')\xrightarrow{\boxtimes} F(X\otimes X')$ for
objects $X$ and $X'$ of $\cB$. Given morphisms $c\colon X\to Y$ and
$c'\colon X'\to Y'$ in $\cB$, we have a natural transformation
\begin{equation}\label{e.lax}
\xymatrix{F(X)\times F(X')\ar[r]^\boxtimes \ar[d]_{F(c)\times F(c')}\drtwocell\omit{}\ar@{}[rd]|(0.35){F_{c,c'}} & F(X\otimes X')\ar[d]^{F(c\otimes c')}\\
F(Y)\times F(Y')\ar[r]^\boxtimes & F(Y\otimes Y').}
\end{equation}
The Grothendieck construction provides a symmetric monoidal $2$-category
$(\cC,\otimes,1_\cC)$ as follows.

An object of $\cC=\cC_F$ is a pair $(X,L)$, where $X\in \cB$ and $L\in
F(X)$. A morphism $(X,L)\to (Y,M)$ in $\cC$ is a pair $(c,u)$, where
$c\colon X\to Y$ is a morphism in $\cB$ and $u\colon F(c)(L)\to M$ is a
morphism in $F(Y)$. A $2$-morphism $(c,u)\to (d,v)$ is a $2$-morphism
$p\colon c\to d$ such that the following diagram commutes:
\[\xymatrix{F(c)(L)\ar[r]^u &M.\\
F(d)(L)\ar[u]^{F(p)(L)}\ar[ru]_v}
\]
We take $1_\cC=(1_\cB,e_{F})$. We put $(X,L)\otimes (X',L')\colonequals
(X\otimes X',L\boxtimes L')$. For morphisms $(c,u)\colon (X,L)\to (Y,M)$ and
$(c',u')\colon (X',L')\to (Y',M')$, we put $(c,u)\otimes (c',u')\colonequals
(c\otimes c',v)$, where
\[v\colon F(c\otimes c')(L\boxtimes L')\xrightarrow{F_{c,c'}}
F(c)L\boxtimes F(c')L'\xrightarrow{u\boxtimes u'} M\boxtimes M'.\]
\end{construction}

In applications in later sections, $F_{c,c'}$ will be a natural isomorphism.

Given a morphism $f\colon X\to X'$ in $\cB$ and an object $L$ of $F(X)$, we
write $f_\natural=(f,\id_{F(f)L})\colon (X,L)\to (X',F(f)L)$.

\begin{lemma}\label{l.push}
Given a $2$-morphism
\begin{equation}\label{e.square}
\xymatrix{X\ar[r]^{c}\ar[d]_{f} & Y \ar[d]^{g} \\
X'\ar[r]^{c'} & Y'\ultwocell\omit{^{p}}}
\end{equation}
in $\cB$ and a morphism $(c,u)\colon (X,L)\to (Y,M)$ in $\cC$ above $c$,
there exists a unique morphism $(c',u')\colon (X',F(f)L)\to (Y',F(g)M)$ in
$\cC$ above $c'$ such that $p$ defines a $2$-morphism in $\cC$:
\[\xymatrix{(X,L)\ar[r]^{(c,u)}\ar[d]_{f_\natural} & (Y,M) \ar[d]^{g_\natural} \\
(X',F(f)L)\ar[r]^{(c',u')} & (Y',F(g)M).\ultwocell\omit{^{p}}}
\]
\end{lemma}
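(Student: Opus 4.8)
The plan is to unwind the definitions of composition and of $2$-morphisms in $\cC=\cC_F$ from Construction~\ref{c.Groth} and to solve for the fiber component of the sought morphism. Note first that the underlying $\cB$-morphism $c'$ is prescribed, since $(c',u')$ is required to lie above $c'$; the only unknown is the fiber component $u'\colon F(c')(F(f)L)\to F(g)M$. I would then compute the two composites of $\cC$ that the asserted square compares, both going from $(X,L)$ to $(Y',F(g)M)$. Using the composition law of the Grothendieck construction together with the composition constraints of the pseudofunctor $F$, the composite $g_\natural\circ(c,u)$ lies above $g\circ c$ with fiber component
\[w_1\colon F(g\circ c)(L)\xrightarrow{\ \sim\ }F(g)(F(c)(L))\xrightarrow{F(g)(u)}F(g)M,\]
while $(c',u')\circ f_\natural$ lies above $c'\circ f$ with fiber component
\[w_2\colon F(c'\circ f)(L)\xrightarrow{\ \sim\ }F(c')(F(f)L)\xrightarrow{u'}F(g)M,\]
the unlabeled arrows being the relevant constraint isomorphisms of $F$ (the maps $f_\natural$ and $g_\natural$ contribute only identities, by the definition of $(-)_\natural$). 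The key observation is that $w_1$ is entirely determined by the given datum $u$, whereas $w_2$ is the unknown $u'$ precomposed with an \emph{isomorphism}.

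Next I would apply the definition of a $2$-morphism in $\cC$. In $\cB$ the cell $p$ of \eqref{e.square} is oriented as $g\circ c\Rightarrow c'\circ f$, and since $F$ takes values in $\Cat^\co$ it yields a natural transformation $F(p)\colon F(c'\circ f)\Rightarrow F(g\circ c)$ with component $F(p)(L)\colon F(c'\circ f)(L)\to F(g\circ c)(L)$. By the defining diagram of $2$-morphisms in $\cC$, asking that $p$ promote to a $2$-morphism $g_\natural\circ(c,u)\Rightarrow(c',u')\circ f_\natural$ amounts to the single equation $w_2=w_1\circ F(p)(L)$ in $F(Y')$. Since $w_2$ equals $u'$ precomposed with an isomorphism, this equation has a unique solution $u'$, namely $w_1\circ F(p)(L)$ precomposed with the inverse of that constraint; this $u'$ both satisfies the equation and is forced by it, yielding existence and uniqueness at once.

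The step needing care is the orientation of $p$: the equation must present $u'$ precomposed with an isomorphism rather than postcomposed with the component $F(p)(L)$, which need not be invertible. This is exactly guaranteed because the vertical arrows are the structural maps $f_\natural$, $g_\natural$ carrying identity fiber components, so the unknown $u'$ sits on the target side of the defining equation; were $p$ oriented the other way, or were the vertical maps not of the form $(-)_\natural$, no solution would exist in general. Conceptually the lemma records the cocartesian (cofibered) property of $f_\natural$ in the Grothendieck construction, and beyond this observation the only remaining work is routine bookkeeping of the composition constraints of $F$.
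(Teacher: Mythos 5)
Your argument is correct and is essentially the paper's own proof: the authors simply record the forced formula $u'\colon F(c')F(f)L\simeq F(c'f)L\xrightarrow{F(p)}F(gc)L\simeq F(g)F(c)L\xrightarrow{F(g)u} F(g)M$, which is exactly what your equation $w_2=w_1\circ F(p)(L)$ yields after cancelling the constraint isomorphism. Your added remarks on the orientation of $p$ in $\Cat^\co$ and on why the unknown $u'$ sits on the invertible side are accurate and just make explicit what the paper leaves implicit.
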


\begin{proof}
By definition, $u'$ is the morphism $F(c')F(f)L\simeq
F(c'f)L\xrightarrow{F(p)}F(gc)L\simeq F(g)F(c)L\xrightarrow{u} F(g)M$.
\end{proof}

\begin{remark}\label{r.adjoint}
Let $f\colon X\to X'$ be a morphism in $\cB$ admitting a right adjoint
$f^!\colon X'\to X$. Let $\eta\colon \id_X\to f^!\circ f$ and
$\epsilon\colon f\circ f^!\to \id_{X'}$ denote the unit and the counit. Let
$L$ be an object of $F(X)$.
\begin{enumerate}
\item $f_\natural\colon (X,L)\to (X',F(f)L)$ admits the right adjoint
\[f^\natural=(f^!,F(\eta)(L))\colon (X',F(f)L)\to (X,L),\]
with unit and counit given by $\eta$ and $\epsilon$.
\item Assume that $F_{c,c'}$ is an isomorphism for all $c$ and $c'$,
    $(X,L)$ is dualizable in $\cC$ of dual $(X\spcheck, L\spcheck)$, and
    $X'$ is dualizable in $\cB$ of dual ${X'}\spcheck$. Then
    $(X',F(f)(L))$ is dualizable in $\cC$ of dual
    $({X'}\spcheck,F({f^!}\spcheck)(L\spcheck))$. The coevaluation and
    evaluation are given by
\begin{multline*}
F(\coev_{X'})(e_F)\xrightarrow{F(\bar\epsilon)} F(f\otimes {f^!}\spcheck)F(\coev_{X})(e_F)\xrightarrow{\coev_L} F(f\otimes {f^!}\spcheck)(L\boxtimes L\spcheck)\\
\shoveright{\xrightarrow{F_{f,{f^!}\spcheck}}F(f)(L)\boxtimes F({f^!}\spcheck)(L\spcheck),}\\
\shoveleft{F(\ev_{X'})(F({f^!}\spcheck)(L\spcheck)\boxtimes F(f)(L))\xrightarrow{F_{{f^!}\spcheck,f}^{-1}} F(\ev_{X'})F( {f^!}\spcheck\otimes f)(L\spcheck\boxtimes L)}\\
\xrightarrow{F(\bar\eta)} F(\ev_X)(L\spcheck\boxtimes L) \xrightarrow{\ev_L} e_F,
\end{multline*}
where $\bar \epsilon$ is \eqref{e.barepsilon}, $\bar\eta$ is
\eqref{e.bareta} (with $g=f$), and $\coev_L$ and $\ev_L$ denote the second
components of $\coev_{(X,L)}$ and $\ev_{(X,L)}$, respectively.
\end{enumerate}
\end{remark}

\begin{construction}\label{c.Groth2}
Let $F,G\colon (\cB,\otimes,1_\cB)\to (\Cat^\co,\times,*)$ be right-lax
symmetric monoidal functors. Let $\alpha\colon F\to G$ be a right-lax
symmetric monoidal natural transformation, which consists of the following
data:
\begin{itemize}
\item For every object $X$ of $\cB$, a functor $\alpha_X\colon F(X)\to
    G(X)$;

\item For every morphism $c\colon X\to Y$, a natural transformation
\[\xymatrix{F(X)\ar[r]^{F(c)}\ar[d]_{\alpha_X}\drtwocell\omit{^{\alpha_c}} &F(Y)\ar[d]^{\alpha_Y}\\ G(X)\ar[r]^{G(c)} & G(Y);}\]

\item A morphism $e_\alpha\colon e_{G}\to \alpha_{1_\cB}(e_{F})$ in
    $F(1_\cB)$;

\item For objects $X$ and $X'$ of $\cB$, a natural transformation
\[\xymatrix{F(X)\times F(X')\ar[r]^\boxtimes \ar[d]_{\alpha_X\times \alpha_{X'}}\drtwocell\omit{^{}}\ar@{}[rd]|(0.4){\alpha_{X,X'}} & F(X\otimes X')\ar[d]^{\alpha_{X\otimes X'}}\\
G(X)\times G(X')\ar[r]^\boxtimes & G(X\otimes X')}
\]
\end{itemize}
subject to various compatibilities. We construct a right-lax symmetric
monoidal functor $\psi\colon (\cC_F,\otimes,1)\to (\cC_G,\otimes,1)$ as
follows.

We take $\psi(X,L)=(X,\alpha_X(L))$ and $\psi(c,u)=(c,\psi u)$, where
\[\psi u\colon G(c)(\alpha_X(L))\xrightarrow{\alpha_c} \alpha_Y(F(c)
L)\xrightarrow{u}\alpha_Y(M)
\]
for $(c,u)\colon (X,L)\to (Y,M)$. We let $\psi$ send every $2$-morphism $p$
to $p$. The right-lax symmetric monoidal structure on $\psi$ is given by
\begin{gather*}
(\id,e_\alpha)\colon
(1_\cB,e_{G})\to (1_\cB,\alpha_{1_\cB}(e_F))=\psi(1_\cB,e_F),\\
\psi(X,L)\otimes \psi(X',L')=(X\otimes X',\alpha_X(L)\boxtimes \alpha_{X'}(L'))\qquad\\
\qquad\xrightarrow{(\id,\alpha_{X,X'})}(X\otimes X',\alpha_{X\otimes X'}(L\boxtimes L'))=\psi((X,L)\otimes (X',L')),\\
\xymatrix{\psi(X,L)\otimes \psi(X',L')\ar[r]^{(\id,\alpha_{X,X'})} \ar[d]_{\psi(c,u)\otimes \psi(c',u')}\ar@{}[rd]|= & \psi((X,L)\otimes (X',L'))\ar[d]^{\psi((c,u)\otimes (c',u'))}\\
\psi(Y,M)\otimes \psi(Y',M')\ar[r]^{(\id,\alpha_{Y,Y'})} & \psi((Y,M)\otimes (Y',M')).}
\end{gather*}
This is a symmetric monoidal structure if $e_\alpha$ and $\alpha_{X,X'}$ are
isomorphisms (which will be the case in our applications).
\end{construction}

\begin{lemma}\label{l.push2}
Consider a $2$-morphism \eqref{e.square} in $\cB$ and a morphism
$(c,u)\colon (X,L)\to (Y,M)$ in $\cC$ above $c$. Let $(c',u')\colon
(X',F(f)L)\to (Y',G(g)M)$ be the morphism associated to $(c,u)$ and let
$(c',(\psi u)')\colon (X',G(f)\alpha_X L)\to (Y',G(g)\alpha_Y M)$ be the
morphism associated to $(c,\psi u)$. Then the following square commutes:
\[\xymatrix{G(c')G(f)\alpha_X L\ar[d]_{\alpha_f}\ar[r]^{(\psi u)'} & G(g)\alpha_Y M\ar[d]^{\alpha_g}\\
G(c')\alpha_{X'}F(f) L\ar[r]^{\psi u'} & \alpha_Y F(g) M.}
\]
\end{lemma}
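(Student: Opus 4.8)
My plan is to read the square as the assertion that $\psi$ intertwines the pushforward of Lemma \ref{l.push} for $F$ and for $G$, the comparison between the two being supplied by the natural transformations $\alpha_f$ and $\alpha_g$. Concretely, I would first expand both ways around the square into explicit chains of morphisms in $G(Y')$. By Lemma \ref{l.push} applied in $\cC_G$ to $(c,\psi u)$, the top arrow $(\psi u)'$ is
\[G(c')G(f)\alpha_X L\simeq G(c'f)\alpha_X L\xrightarrow{G(p)}G(gc)\alpha_X L\simeq G(g)G(c)\alpha_X L\xrightarrow{G(g)(\psi u)}G(g)\alpha_Y M,\]
with $\psi u=u\circ\alpha_c$ by Construction \ref{c.Groth2}; postcomposing with $\alpha_g(M)$ gives the upper-right composite. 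The bottom arrow $\psi u'$ equals $\alpha_{Y'}(u')\circ\alpha_{c'}$, where $u'$ is the composite of Lemma \ref{l.push} for $F$; precomposing with $G(c')(\alpha_f(L))$ gives the lower-left composite. The lemma thus reduces to an identity between two chains in which only the associativity constraints of $F$ and $G$, the transformations $\alpha_{(-)}$, and the morphisms $F(p)$, $G(p)$, $u$ appear. Note that the monoidal data $e_\alpha$ and $\alpha_{X,X'}$ never enter, so it suffices to use that $\alpha$ is a pseudonatural transformation of the pseudofunctors underlying $F,G\colon \cB\to\Cat^\co$.

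To match the two chains I would insert the objects $\alpha_{Y'}F(c'f)L$, $\alpha_{Y'}F(gc)L$, and $\alpha_{Y'}F(g)F(c)L$ and factor the rectangle into four regions, each commuting for a structural reason. The compatibility of $\alpha$ with the composition $c'\circ f$ identifies $G(c')(\alpha_f(L))$ followed by $\alpha_{c'}$ and the constraint $F(c')F(f)\simeq F(c'f)$ with the constraint $G(c')G(f)\simeq G(c'f)$ followed by $\alpha_{c'f}$. The compatibility of $\alpha$ with the $2$-morphism $p$ relates $\alpha_{c'f}$ followed by $\alpha_{Y'}(F(p))$ to $G(p)$ followed by $\alpha_{gc}$. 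The compatibility of $\alpha$ with the composition $g\circ c$ identifies $\alpha_{gc}$, read through the $F$- and $G$-associativity constraints, with $G(g)(\alpha_c)$ followed by $\alpha_g$ evaluated at $F(c)L$. Finally, naturality of $\alpha_g$ applied to $u\colon F(c)L\to M$ turns $\alpha_g(F(c)L)$ followed by $\alpha_{Y'}(F(g)(u))$ into $G(g)(\alpha_Y(u))$ followed by $\alpha_g(M)$. Chaining these four identities carries the lower-left composite onto the upper-right one.

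The argument is conceptually routine; the only real labor is bookkeeping. The delicate points are writing the two composition compatibilities of $\alpha$ in exactly the form dictated by the coherence isomorphisms $F(c')F(f)\simeq F(c'f)$ and $G(gc)\simeq G(g)G(c)$, and checking that the associativity constraints thereby introduced are precisely those used to define $u'$ and $(\psi u)'$ in Lemma \ref{l.push}. Since dualizability and the symmetric monoidal structure play no part, the same computation proves the statement for any pseudonatural transformation between the underlying pseudofunctors.
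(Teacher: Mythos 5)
Your proposal is correct and follows essentially the same route as the paper: the paper likewise pastes the square together from inner cells that commute by the compatibilities of $\alpha$ with compositions and with the $2$-morphism $p$, plus naturality of $\alpha_g$ applied to $u$ (your four regions are just a finer subdivision of the paper's left-hand cell). Your closing observation that only the pseudonaturality of $\alpha$, and none of the monoidal data, is used is also accurate.
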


\begin{proof}
The square decomposes into
\[\xymatrix{G(c')G(f)\alpha_X L\ar[r]^{G(p)}\ar[d]_{\alpha_{f}} & G(g)G(c)\alpha_X L\ar[d]^{\alpha_c}\\
G(c')\alpha_{X'}F(f)L\ar[d]_{\alpha_{c'}} & G(g)\alpha_Y F(c)L\ar[r]^{u}\ar[d]^{\alpha_g} &G(g)\alpha_Y M\ar[d]^{\alpha_g}\\
\alpha_{Y'}F(c')F(f)L\ar[r]^{F(p)} & \alpha_{Y'}F(g)F(c)L\ar[r]^{u} & \alpha_Y F(g) M}
\]
where the inner cells commute.
\end{proof}

\begin{construction}
Let $(\cB,\otimes,1_\cB)\xrightarrow{H}
(\cB',\otimes,1_{\cB'})\xrightarrow{G}(\Cat^\co,\times,*)$ be right-lax
symmetric monoidal functors. Then we have an obvious right-lax symmetric
monoidal functor $\cC_{GH}\to \cC_G$ sending $(X,L)$ to $(HX,L)$, $(c,u)$ to
$(Hc,u)$, and every $2$-morphism $p$ to $Hp$. This is a symmetric monoidal
functor if $H$ is.
\end{construction}

\begin{construction}\label{c.tri}
Let
\[\xymatrix{(\cB,\otimes,1_\cB)\ar[dr]^{F}\ar[d]_{H}\drtwocell\omit{<1>_{\alpha}}\\
(\cB',\otimes,1_{\cB'})\ar[r]_G &(\Cat^\co,\times,*)}
\]
be a diagram of right-lax symmetric monoidal functors and right-lax
symmetric monoidal transformation. Combining the two preceding
constructions, we obtain right-lax symmetric monoidal functors $\cC_{F}\to
\cC_{GH}\to \cC_G$.
\end{construction}

\section{A relative Lefschetz-Verdier formula}\label{s.2}
We apply the formalism of duals and pairings to the symmetric monoidal
$2$-category of cohomological correspondences, which we define in Subsection
\ref{s.22}. We prove relative K\"unneth formulas in Subsection \ref{s.Kunn}
and use them to show the equivalence of dualizability and local acyclicity
(Theorem \ref{t.dual}) in Subsection \ref{s.23}. We prove the relative
Lefschetz-Verdier theorem for dualizable objects (Theorem \ref{t.lv}) in
Subsection \ref{s.24}. Together, the two theorems imply Theorem \ref{t.lv0}.
In Subsection \ref{s.25}, we prove that base change preserves duals
(Proposition \ref{p.pull}).

We will often drop the letters $L$ and $R$ from the notation of derived
functors.

\subsection{Relative K\"unneth formulas}\label{s.Kunn}
We extend some K\"unneth formulas over fields \cite[III 1.6, Proposition
1.7.4, (3.1.1)]{SGA5} to Noetherian base schemes under the assumption of
universal local acyclicity. Some special cases over a smooth scheme over a
perfect field were previously known \cite[Corollary 3.3, Proposition
3.5]{YZ}.

Let $S$ be a coherent scheme and let $\Lambda$ be a torsion commutative
ring. Let $X$ be a scheme over $S$. We let $D(X,\Lambda)$ denote the
unbounded derived category of the category of \'etale sheaves of
$\Lambda$-modules on~$X$. Following \cite[Th. finitude, D\'efinition
2.12]{SGA4d}, we say that $L\in D(X,\Lambda)$ is \emph{locally acyclic} over
$S$ if the canonical map $L_x\to R\Gamma(X_{(x)t},L)$ is an isomorphism for
every geometric point $x\to X$ and every algebraic geometric point $t\to
S_{(x)}$. Here $X_{(x)t}\colonequals X_{(x)}\times_{S_{(x)}} t$ denotes the
Milnor fiber. For $X$ of finite type over $S$, local acyclicity coincides
with strong local acyclicity \cite[Lemma 4.7]{LZdual}.

\begin{notation}
For $a_X\colon X\to S$ separated of finite type, we write
$K_{X/S}=a_X^!\Lambda_S$ and $D_{X/S}=R\cHom(-,K_X)$. Note that
$K_{S/S}=\Lambda_S$ is in general not an (absolute) dualizing complex.
\end{notation}

Assume in the rest of Subsection \ref{s.Kunn} that $S$ and $\Lambda$ are
Noetherian. We let $D_{\ft}(X,\Lambda)$ denote the full subcategory of
$D(X,\Lambda)$ consisting of complexes of finite tor-amplitude.

\begin{prop}\label{p.la}
Let $X',X,Y$ be schemes of finite type over $S$ and let $f\colon X\to X'$ be
a morphism over~$S$. Let $M\in D_{\ft}(Y,\Lambda)$ universally locally
acyclic over $S$, $L\in D^+(X,\Lambda)$. Then the canonical morphism
$f_*L\boxtimes_S M\to (f\times_S \id_Y)_*(L\boxtimes_S M)$ is an
isomorphism.
\end{prop}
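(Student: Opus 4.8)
The plan is to test the canonical morphism on stalks at the geometric points of $X'\times_S Y$ and to reduce the resulting comparison to the defining Milnor-fibre property of local acyclicity of $M$. Set $Z=X'\times_S Y$, write $\bar f=f\times_S\id_Y\colon X\times_S Y\to Z$ for the base change of $f$ along $p\colon Z\to X'$, and $q\colon X\times_S Y\to X$ for the other projection. Unwinding definitions, the morphism in question is the base-change map $p^*f_*L\to \bar f_*q^*L$, tensored with $\mathrm{pr}_Y^*M$, followed by the projection-formula map $\bar f_*q^*L\otimes \mathrm{pr}_Y^*M\to \bar f_*(q^*L\otimes \mathrm{pr}_Y^*M)$, where one uses $\bar f^*\mathrm{pr}_Y^*M\simeq \mathrm{pr}_Y^*M$ because $\mathrm{pr}_Y\circ\bar f=\mathrm{pr}_Y$. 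Since the family of stalk functors at geometric points is conservative, it suffices to fix a geometric point $z=(x',y)$ of $Z$ over some $s\to S$ and show the induced map of stalks is an isomorphism.

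First I would compute the two stalks by the limit description of the stalk of a finite-type pushforward. As $f$ is of finite type and $L\in D^+$, the source has stalk $(f_*L)_{x'}\otimes M_y\simeq R\Gamma(X_{(x')},L)\otimes M_y$, where $X_{(x')}=X\times_{X'}(X')_{(x')}$. Since $\bar f$ is the base change of $f$ along $Z_{(z)}\to X'$, the target has stalk $R\Gamma\big(X\times_{X'}Z_{(z)},\,q^*L\otimes \mathrm{pr}_Y^*M\big)$. Writing $\theta\colon Z_{(z)}\to (X')_{(x')}$ for the map induced on strict henselizations and $\rho\colon X\times_{X'}Z_{(z)}\to X_{(x')}$ for its base change, and applying Leray along $\rho$, the target stalk becomes $R\Gamma\big(X_{(x')},\,\rho_*(\rho^*L\otimes \mathrm{pr}_Y^*M)\big)$.

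The heart of the argument is to identify $\rho_*(\rho^*L\otimes \mathrm{pr}_Y^*M)$ with $L\otimes_\Lambda M_y$ on $X_{(x')}$, compatibly with the canonical map; since $M\in D_\ft$, the complex $-\otimes_\Lambda M_y$ has bounded tor-amplitude, so this matches $R\Gamma(X_{(x')},L)\otimes M_y$ as required. Testing on a geometric point $\xi\to X_{(x')}$ with image $\eta\to Y$, the complex $\rho^*L$ is constant of value $L_\xi$ on the strict localization, so the comparison reduces to showing that the cohomology of $\mathrm{pr}_Y^*M$ along the fibre of $\theta$ over $\xi$ equals $M_y$ with the cospecialization maps isomorphisms. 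This is exactly local acyclicity of $M$ over $S$, applied after base changing $Y/S$ along $(X')_{(x')}\to S$; it is for this base change that I need \emph{universal} local acyclicity rather than mere local acyclicity, and I use $M\in D_\ft$ together with $L\in D^+$ to justify the projection-formula manipulations and the convergence of the stalk computations in the unbounded setting.

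I expect the main obstacle to be precisely this last reduction: matching the strict henselization $Z_{(z)}=(X'\times_S Y)_{(z)}$ of the fibre product, which is \emph{not} a product of strict henselizations, with the Milnor fibres $Y_{(\eta)}\times_{S_{(\eta)}}t$ occurring in the definition of local acyclicity, and verifying that the identification is induced by the canonical base-change and projection-formula maps rather than being merely an abstract isomorphism. The bookkeeping for $L$ unbounded above is a secondary technical point, controlled by the finite tor-amplitude of $M$.
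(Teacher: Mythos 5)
Your strategy (test on stalks and reduce to the Milnor-fibre property) is not the one the paper uses, and as written it has a genuine gap exactly where you predict one. The paper reduces, by cohomological descent for a Zariski cover and Nagata compactification, to two cases: $f$ proper, settled by proper base change, and $f$ an open immersion, settled by citing Deligne's result [SGA~4$\tfrac12$, Th.\ finitude, App., Prop.\ 2.10]. That citation is not a formality: it is precisely the statement that local acyclicity controls $j_*$ along an open immersion in the base, and its proof is a nontrivial d\'evissage. Your plan in effect tries to reprove this from the pointwise definition of local acyclicity, and the step that fails is the assertion that the required identification ``is exactly local acyclicity of $M$ over $S$, applied after base changing along $(X')_{(x')}\to S$.'' Local acyclicity (even universal) gives $M_y\xrightarrow{\sim}R\Gamma\bigl(Z_{(z)}\times_{X'_{(x')}}t,\,M\bigr)$ for a \emph{geometric point} $t$ of $X'_{(x')}$, i.e.\ it computes the cohomology of Milnor fibres. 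What your stalk computation needs is the cohomology of $Z_{(z)}\times_{X'_{(x')}}V$ for $V$ a whole strictly local scheme (your $X_{(\xi)}$, or the strict localization of $X'$ at the image of $\xi$), i.e.\ of a \emph{tube}. The passage from Milnor fibres over points to tubes over strictly local subschemes is not a definition-chase; it is the actual mathematical content of Deligne's Prop.\ 2.10 (and of the equivalence with strong local acyclicity cited in the paper from [LZdual, Lemma 4.7]), and your proposal acknowledges this obstacle but does not overcome it.

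There is also a concrete false step: you claim that ``$\rho^*L$ is constant of value $L_\xi$ on the strict localization.'' It is not; the restriction of $L$ to $X_{(\xi)}$ has stalk $L_\xi$ only at the closed point. So you cannot simply factor $L_\xi$ out of the cohomology of the fibre of $\rho$; what you need is a projection-formula isomorphism $L'\otimes\rho_*N\xrightarrow{\sim}\rho_*(\rho^*L'\otimes N)$ for the non-proper, non-finite-type morphism $\rho$ and an arbitrary $L'\in D^+$, which is again not automatic and is itself part of what local acyclicity must be used to establish. If you want to salvage a direct argument, I would first perform the paper's reduction (Zariski descent plus Nagata), so that only the open-immersion case remains, and then either invoke [Th.\ finitude, App., 2.10] or reprove it; in the latter case the tube-versus-Milnor-fibre comparison is the point you must actually prove.
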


This follows from \cite[Theorem 7.6.9]{Fu}. We recall the proof for
completeness.

\begin{proof}
By cohomological descent for a Zariski open cover, we may assume $f$
separated. By Nagata compactification, we are reduced to two cases: either
$f$ is proper, in which case we apply proper base change, or $f$ is an open
immersion, in which case we apply \cite[Th. finitude, App., Proposition
2.10]{SGA4d} (with $i=\id_{X'}$).
\end{proof}

In the rest of Subsection \ref{s.Kunn}, assume that $m\Lambda=0$ for some
integer $m$ invertible on $S$.

\begin{prop}\label{p.lau}
Let $X',X,Y$ be schemes of finite type over $S$ and let $f\colon X\to X'$ be
a separated morphism over $S$. Let $M\in D_{\ft}(Y,\Lambda)$ universally
locally acyclic over $S$, $L\in D^+(X',\Lambda)$. Then the canonical
morphism $f^!L\boxtimes_S M\to (f\times_S \id_Y)^!(L\boxtimes_S M)$ is an
isomorphism.
\end{prop}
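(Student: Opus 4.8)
The plan is to deduce Proposition \ref{p.lau} from Proposition \ref{p.la} by adjunction. The morphism $f$ is separated of finite type, so by Nagata compactification we may factor $f = \bar f \circ j$ with $j$ an open immersion and $\bar f$ proper. Since both $f^!$ and the Künneth morphism are compatible with composition of such factorizations, it suffices to treat the two extreme cases separately. The proper case should follow from the open case of Proposition \ref{p.la} together with the proper case of Proposition \ref{p.lau}, but the cleanest strategy is to handle each of the two building blocks directly.

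First I would treat the case where $f$ is proper. Here $f^! $ is right adjoint to $f_! = f_*$, and $(f \times_S \id_Y)^!$ is right adjoint to $(f\times_S\id_Y)_! = (f\times_S\id_Y)_*$. The claim is that the natural base-change map between the two right adjoints, evaluated on $L\boxtimes_S M$, is an isomorphism. I would obtain this by passing to left adjoints: the transpose of the map in Proposition \ref{p.lau} is, up to the projection formula for the external product, exactly the isomorphism furnished by the proper case of Proposition \ref{p.la}. Concretely, for $N \in D(X,\Lambda)$ one has
\[
\Hom(N, f^!L \boxtimes_S M) \simeq \Hom(N, (f\times_S\id_Y)^!(L\boxtimes_S M)),
\]
and writing both sides through the $(f_*, f^!)$ and $((f\times_S\id_Y)_*,(f\times_S\id_Y)^!)$ adjunctions reduces the comparison to the pushforward Künneth isomorphism of Proposition \ref{p.la}. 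The main point to verify carefully is that the canonical map in the statement is indeed the adjoint of the one in Proposition \ref{p.la}; this is a diagram chase with the unit and counit of the two adjunctions and the external-product projection formula, which I expect to be routine but the place where signs of effort concentrate.

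The other case is where $f$ is an open immersion, and this is where the argument is most transparent. For an open immersion, $f^! = f^*$, and likewise $(f\times_S\id_Y)^! = (f\times_S\id_Y)^*$. The external product $-\boxtimes_S M$ visibly commutes with $*$-restriction along the open immersion, since $*$-pullback is monoidal and $f\times_S\id_Y$ is the base change of $f$ along the projection $X\times_S Y \to X$. Thus in the open-immersion case the map is an isomorphism essentially by construction, with no hypotheses beyond functoriality. The universal local acyclicity of $M$ is needed only in the proper case, exactly as the assumptions in Proposition \ref{p.la} suggest.

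The hard part will be the first reduction: confirming that the Künneth morphism in Proposition \ref{p.lau} is compatible with the Nagata factorization $f = \bar f \circ j$, so that an isomorphism in each of the two cases yields an isomorphism in general. This requires checking that the comparison map for a composite agrees with the composite of the comparison maps, which is a compatibility of base-change morphisms for $f^!$ under composition together with the corresponding compatibility of the external-product morphisms. Once this is in place, the proof is assembled by combining the open and proper cases. I would also remark that, as with Proposition \ref{p.la}, one may alternatively cite \cite[Theorem 7.6.9]{Fu} for this statement, but the adjunction argument above keeps the proof self-contained.
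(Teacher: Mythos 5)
Your open-immersion case is fine (there $f^!=f^*$ and the statement is formal, with no use of local acyclicity), and the reduction of the general case to building blocks via compatibility of the comparison maps with composition is legitimate. But the proper case, which is where all the content lies, has a genuine gap. The map in question is the adjoint of $f_{Y!}(f^!L\boxtimes_S M)\simeq f_!f^!L\boxtimes_S M\to L\boxtimes_S M$; to show it is an isomorphism you would need to identify $\Hom(N,f^!L\boxtimes_S M)$ with $\Hom(f_{Y!}N,L\boxtimes_S M)$ for all $N$, and there is no adjunction available to compute the left-hand side, because $f^!L\boxtimes_S M$ is not presented as $f_Y^!$ of anything. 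More decisively: the proper case of Proposition \ref{p.la} is just proper base change and holds with no hypothesis on $M$, so if your adjunction argument worked it would prove Proposition \ref{p.lau} for proper $f$ without universal local acyclicity. That is false. For instance, with $S$ a smooth curve, $f=i\colon\{s_0\}\hookrightarrow S=X'$, $L=\Lambda_S$, $Y=S$ and $M$ a skyscraper at $s_0$, one side is $M_{s_0}(-1)[-2]$ and the other is $M_{s_0}$.

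The missing idea is how to get a handle on $f^!$ when $f$ is not étale. The paper reduces (Zariski-locally) to $f$ smooth or a closed immersion rather than using Nagata; the smooth case is Poincar\'e duality $f^!\simeq f^*(d)[2d]$, and for a closed immersion $i$ with complementary open immersion $j$ it uses the localization triangle $i^!\to i^*\to i^*j_*j^*$. Comparing this triangle before and after $-\boxtimes_S M$ reduces everything to the commutation of $j_*$ with $-\boxtimes_S M$, which is exactly the open-immersion case of Proposition \ref{p.la} and is the one place where universal local acyclicity of $M$ is used. Your write-up never touches this mechanism, so the proof does not go through as proposed.
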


The morphism is adjoint to
\[(f\times_S \id_Y)_! (f^!L\boxtimes_S M)\simeq f_!f^!L\boxtimes_S
M\xrightarrow{\adj \boxtimes_S \id_M} L\boxtimes_S M,
\]
where $\adj\colon f_!f^!L\to L$ denotes the adjunction.

\begin{proof}
We may assume that $f$ is smooth or a closed immersion. For $f$ smooth of
dimension $d$, $f^*(d)[2d]\simeq f^!$ and the assertion is clear. Assume
that $f$ is a closed immersion, and let $j$ be the complementary open
immersion. Let $f_Y=f\times_S \id_Y$ and $j_Y=j\times_S \id_Y$. Then we have
a morphism of distinguished triangles
\[\xymatrix{f^!L\boxtimes_S M \ar[r]\ar[d]_{\alpha}& f^*L\boxtimes_S M\ar[r]\ar[d]^{\simeq} & f^*j_*j^*L\boxtimes_S
M\ar[d]^{\beta}\ar[r]&\\
f_Y^!(L\boxtimes_S M)\ar[r] & f_Y^*(L\boxtimes_S M)\ar[r] & f_Y^*j_{Y*}j_Y^*(L\boxtimes_S M)\ar[r]&,}
\]
where $\beta$ is an isomorphism by Proposition \ref{p.la}. It follows that
$\alpha$ is an isomorphism.
\end{proof}

The following is a variant of \cite[Corollary 8.10]{Saito} and \cite[Theorem
6.8]{LZdual}. Here we do not require smoothness or regularity.

\begin{cor}\label{c.DKunn}
Let $X$ and $Y$ be schemes of finite type over $S$, with $X$ separated over
$S$. Let $M\in D_{\ft}(Y,\Lambda)$ universally locally acyclic over $S$.
Then the canonical morphism $K_{X/S}\boxtimes_S M \to p_Y^! M$ is an
isomorphism, where $p_Y\colon X\times_S Y\to Y$ is the projection.
\end{cor}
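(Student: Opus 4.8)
The plan is to recognize the statement as the special case of Proposition~\ref{p.lau} in which the target scheme $X'$ is the base $S$ itself. Concretely, I would apply Proposition~\ref{p.lau} with $X'=S$, with $f=a_X\colon X\to S$ the structure morphism---which is separated precisely because $X$ is separated over $S$---and with $L=\Lambda_S\in D^+(S,\Lambda)$. All the hypotheses are then in force: $S$ is trivially of finite type over itself, $X$ and $Y$ are of finite type over $S$, the complex $M$ is universally locally acyclic over $S$, and $\Lambda_S$ lies in $D^+(S,\Lambda)$.

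It then remains to match the two sides of the resulting isomorphism with the two sides of the corollary. On the source, $f^!L=a_X^!\Lambda_S=K_{X/S}$ by definition, so $f^!L\boxtimes_S M=K_{X/S}\boxtimes_S M$. On the target, the canonical identification $S\times_S Y=Y$ carries $\Lambda_S\boxtimes_S M$ to $M$ and, more importantly, carries the base-changed morphism $f\times_S\id_Y\colon X\times_S Y\to S\times_S Y$ to the projection $p_Y\colon X\times_S Y\to Y$; hence $(f\times_S\id_Y)^!(L\boxtimes_S M)=p_Y^!M$.

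The only thing requiring attention is that the morphism produced by Proposition~\ref{p.lau} agrees, under these identifications, with the canonical morphism $K_{X/S}\boxtimes_S M\to p_Y^!M$ of the statement. This is a routine comparison of adjoints: the morphism of Proposition~\ref{p.lau} is the one adjoint to $p_{Y!}(K_{X/S}\boxtimes_S M)\simeq a_{X!}K_{X/S}\boxtimes_S M\xrightarrow{\adj\boxtimes_S\id_M}\Lambda_S\boxtimes_S M=M$, where $\adj\colon a_{X!}a_X^!\Lambda_S\to\Lambda_S$ is the adjunction counit. I expect this bookkeeping to be the only obstacle, and an entirely formal one; there is no geometric input beyond Proposition~\ref{p.lau} itself, which is precisely where the universal local acyclicity of $M$ enters.
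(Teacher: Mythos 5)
Your proposal is correct and coincides with the paper's own proof, which is exactly the specialization of Proposition~\ref{p.lau} to $X'=S$ and $L=\Lambda_S$. The additional bookkeeping you carry out (identifying $f^!L$ with $K_{X/S}$ and matching the adjoint descriptions of the two canonical morphisms) is routine and consistent with how the paper defines both maps.
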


\begin{proof}
This is Proposition \ref{p.lau} applied to $X'=S$ and $L=\Lambda_S$.
\end{proof}

\begin{prop}\label{p.D}
Let $X$ and $Y$ be schemes of finite type over $S$, with $X$ separated over
$S$. Let $M\in D_{\ft}(Y,\Lambda)$ universally locally acyclic over $S$,
$L\in D^-_c(X,\Lambda)$. Then the canonical morphism $D_{X/S}L\boxtimes_S
M\to R\cHom(p_X^*L,p_Y^!M)$ is an isomorphism. Here $p_X\colon X\times_S
Y\to X$ and $p_Y\colon X\times_S Y\to Y$ are the projections.
\end{prop}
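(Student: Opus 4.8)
The plan is to reduce the asserted isomorphism to the Künneth formula for $!$-pullback already established in Corollary \ref{c.DKunn}, by unwinding the definitions of the internal $\cHom$ and the dualizing operation $D_{X/S}$. The target morphism
\[D_{X/S}L\boxtimes_S M\to R\cHom(p_X^*L,p_Y^!M)\]
should be constructed as the adjoint of a natural evaluation-type map, so the first step is to pin down exactly which canonical morphism is meant and to record it in a form amenable to adjunction. By definition $D_{X/S}L=R\cHom(L,K_{X/S})$, so that $D_{X/S}L\boxtimes_S M=R\cHom(L,K_{X/S})\boxtimes_S M$; the natural map in question is obtained by tensoring the evaluation $L\otimes R\cHom(L,K_{X/S})\to K_{X/S}$ with $M$ along the external product and then using the adjunction $(\otimes,R\cHom)$ together with $K_{X/S}\boxtimes_S M\simeq p_Y^!M$.

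The key technical input is Corollary \ref{c.DKunn}, which supplies the isomorphism $K_{X/S}\boxtimes_S M\simeq p_Y^!M$ under the standing universal local acyclicity hypothesis on $M$. First I would rewrite the right-hand side using the general adjunction identity
\[R\cHom(p_X^*L,p_Y^!M)\simeq p_X^* R\cHom(L,-)\text{-type formula},\]
but since $L$ need not be dualizable I would instead proceed by the standard projection/Künneth manipulation: one checks that for $A\in D(X,\Lambda)$ and $B\in D(Y,\Lambda)$ there is a canonical map $R\cHom(A,K_{X/S})\boxtimes_S B\to R\cHom(p_X^*A,K_{X/S}\boxtimes_S B)$, and then composes with the identification $K_{X/S}\boxtimes_S B\simeq p_Y^!B$ from Corollary \ref{c.DKunn}. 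Taking $A=L$ and $B=M$ yields precisely the morphism to be shown invertible.

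To prove it is an isomorphism I would argue by dévissage on $L$, exactly as in the proof of Proposition \ref{p.lau}. Both sides are triangulated functors in $L\in D^-_c(X,\Lambda)$ sending distinguished triangles to distinguished triangles, and the formation of the map is compatible with these triangles, so by the way-out lemma it suffices to treat $L=j_!\Lambda_U$ for $j\colon U\to X$ an étale (or more generally locally closed) morphism, reducing to the constant sheaf case where $R\cHom(p_X^*L,-)$ becomes an ordinary $!$-pushforward that is controlled by Proposition \ref{p.lau} and Corollary \ref{c.DKunn}. The main obstacle I anticipate is not any single step but the careful verification that the canonical morphism one writes down really is the standard base-change/Künneth map and is compatible with the dévissage triangles; once that bookkeeping is in place, the universal local acyclicity of $M$ enters only through Corollary \ref{c.DKunn}, and the constructibility and boundedness of $L$ guarantee the dévissage terminates.
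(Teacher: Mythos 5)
Your proposal follows essentially the same route as the paper: the canonical map is the adjoint of the evaluation $(D_{X/S}L\otimes L)\boxtimes_S M\to K_{X/S}\boxtimes_S M\to p_Y^!M$, and one reduces by d\'evissage (the paper cites SGA~4, IX~2.7) to $L=j_!\Lambda_U$ with $j\colon U\to X$ \'etale and $U$ affine, where the claim follows from the K\"unneth results. The only small correction is that the pushforward step in this final reduction, namely $j_*D_{U/S}\Lambda_U\boxtimes_S M\simeq j_{Y*}(D_{U/S}\Lambda_U\boxtimes_S M)$, is an instance of Proposition \ref{p.la} (the $*$-pushforward K\"unneth formula) rather than Proposition \ref{p.lau}, after which Corollary \ref{c.DKunn} and adjunction for $j_{Y!}$ conclude.
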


The morphism is adjoint to $(D_{X/S}L\otimes L)\boxtimes_S M\to
K_{X/S}\boxtimes_S M\to p_Y^!M$.

\begin{proof}
By \cite[IX Proposition 2.7]{SGA4}, we may assume $L=j_!\Lambda$ for
$j\colon U\to X$ \'etale with $U$ affine. Then the morphism can be
identified with
\[j_*D_{U/S}
\Lambda_U\boxtimes_S M\to j_{Y*}(D_{U/S}\Lambda_U\boxtimes M)\to j_{Y*}R\cHom(\Lambda_{U\times_S Y},j_Y^!
p_Y^!M)\simeq R\cHom(j_{Y!}\Lambda_{U\times_S Y},p_Y^!M),
\]
where $j_Y=j\times_S \id_Y\colon U\times_S Y\to X\times_S Y$. The first
arrow is an isomorphism by Proposition \ref{p.la}. The second arrow is an
isomorphism by Corollary \ref{c.DKunn}.
\end{proof}

\subsection{The category of cohomological correspondences}\label{s.22}
Let $S$ be a coherent scheme and let $\Lambda$ be a torsion commutative
ring.

\begin{construction}\label{c.C}
We define the $2$-category of cohomological correspondences
$\cC=\cC_{S,\Lambda}$ as follows. An object of $\cC$ is a pair $(X,L)$,
where $X$ is a scheme separated of finite type over $S$ and $L\in
D(X,\Lambda)$. A correspondence over $S$ is a pair of morphisms
$X\xleftarrow{\vecl{c}} C\xrightarrow{\vecr{c}} Y$ of schemes over $S$,
where $X$, $Y$, and $C$ are separated and of finite type over $S$. A
morphism $(X,L)\to (Y,M)$ in $\cC$ is a cohomological correspondence over
$S$, namely a pair $(c,u)$, where $c=(\vecl{c},\vecr{c})$ is a
correspondence over $S$ and $u\colon \vecl{c}^*L\to \vecr{c}^!M$ is a
morphism in $D(C,\Lambda)$. Given cohomological correspondences
$(X,L)\xrightarrow{(c,u)} (Y,M)\xrightarrow{(d,v)}(Z,N)$, the composite is
$(e,w)$, where $e$ is the composite correspondence given by the diagram
\begin{equation}\label{e.comp}
\xymatrix{&& C\times_Y D\ar[rd]^{\vecr{c}'}\ar[ld]_{\vecl{d}'}\\
&C\ar[ld]_{\vecl{c}}\ar[rd]^{\vecr{c}} && D\ar[ld]_{\vecl{d}}\ar[rd]^{\vecr{d}}\\
X&&Y&&Z,}
\end{equation}
and $w$ is given by the composite
\[\vecl{d}'^*\vecl{c}^*L\xrightarrow{u}\vecl{d}'^*\vecr{c}^!M\xrightarrow{\alpha}\vecr{c}'^!\vecl{d}^*M\xrightarrow{v}\vecr{c}'^!\vecr{d}^!N,\]
where $\alpha$ is adjoint to the base change isomorphism
$\vecr{c}'_!\vecl{d}'^*\simeq \vecl{d}^*\vecr{c}_!$. Given $(c,u)$ and
$(d,v)$ from $(X,L)$ to $(Y,M)$, a $2$-morphism $(c,u)\to (d,v)$ is a
\emph{proper} morphism of schemes $p\colon C\to D$ satisfying
$\vecl{d}p=\vecl{c}$ and $\vecr{d}p=\vecr{c}$ and such that $v$ is equal to
\[\vecl{d}^*L\xrightarrow{\adj} p_*p^*\vecl{d}^*L\simeq
p_!\vecl{c}^*L
\xrightarrow{u}p_!\vecr{c}^!M\simeq p_!p^!\vecr{d}^!M\xrightarrow{\adj} \vecr{d}^!M.
\]
Here we used the canonical isomorphism $p_!\simeq p_*$. Composition of
$2$-morphisms is given by composition of morphisms of schemes.

The $2$-category admits a symmetric monoidal structure. We put $(X,L)\otimes
(X',L')\colonequals (X\times_S X',L\boxtimes_S L')$. Given $(c,u)\colon
(X,L)\to (Y,M)$ and $(c',u')\colon (X',L')\to (Y',M')$, we define
$(c,u)\otimes (c',u')$ to be $(d,v)$, where $d=(\vecl{c}\times_S
\vecl{c'},\vecr{c}\times_S \vecr{c'})$ and $v$ is the composite
\[\vecl{d}^* (L\boxtimes_S L')\simeq
\vecl{c}^*L\boxtimes_S\vecl{c'}^*L'\xrightarrow{u\boxtimes_S
u'}\vecr{c}^!M\boxtimes_S\vecr{c'}^!M'\xrightarrow{\alpha}
\vecr{d}^!(M\boxtimes_S M'),
\]
where $\alpha$ is adjoint to the K\"unneth formula $\vecr{d}_!(-\boxtimes_S
-)\simeq \vecr{c}_!-\boxtimes_S \vecr{c'}_!-$. Tensor product of
$2$-morphisms is given by product of morphisms of schemes over $S$. The
monoidal unit of $\cC$ is $(S,\Lambda_S)$.
\end{construction}

\begin{remark}
Let $\cB_S$ be the symmetric monoidal $2$-category of correspondences
obtained by omitting $L$ from the above construction. The symmetric monoidal
structure on $\cB_S$ is given by fiber product of schemes over $S$ (which is
not the product in $\cB_S$ for $S$ nonempty). Consider the functor $F\colon
\cB_S\to \Cat^\co$ carrying $X$ to $D(X,\Lambda)$ and
$c=(\vecl{c},\vecr{c})$ to $\vecr{c}_!\vecl{c}^*$, and a $2$-morphism
$p\colon c\to d$ to the natural transformation
$\vecr{d}_!\vecl{d}^*\xrightarrow{\adj} \vecr{d}_!p_*p^*\vecl{d}^*\simeq
\vecr{c}_!\vecl{c}^*$. The compatibility of $F$ with composition
\eqref{e.comp} is given by the base change isomorphism
$\vecl{d}^*\vecr{c}_!\simeq\vecr{c}'_!\vecl{d}'^*$. The functor $F$ admits a
right-lax symmetric monoidal structure given by $e_F=\Lambda_S$ and
$\boxtimes_S$, with K\"unneth formula for $!$-pushforward providing a
natural isomorphism $F_{c,c'}$ \eqref{e.lax}. The Grothendieck construction
(Construction \ref{c.Groth}) then produces $\cC_{S,\Lambda}$.
\end{remark}

The category $\Omega\cC$ consists of pairs $(X,\alpha)$, where $X$ is a
scheme separated of finite type over $S$ and $\alpha\in H^0(X,K_{X/S})$. A
morphism $(X,\alpha)\to (Y,\beta)$ is  a proper morphism $X\to Y$ of schemes
over $S$ such that $\beta=p_*\alpha$, where
\begin{equation}\label{e.pp}
p_*\colon H^0(X,K_{X/S})\to H^0(Y,K_{Y/S})
\end{equation}
is given by adjunction $p_*p^!\simeq p_!p^!\to \id$.

\begin{lemma}\label{l.Hom}
The symmetric monoidal structure $\otimes$ on $\cC$ is closed, with internal
mapping object $\cHom((X,L),(Y,M))=(X\times_S Y,R\cHom(p_X^*L,p_Y^!M))$.
\end{lemma}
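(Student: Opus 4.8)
The plan is to establish, for every object $(Z,N)$ of $\cC$, a natural equivalence of hom-categories
\[
\Hom_\cC((Z,N)\otimes (X,L),(Y,M))\simeq \Hom_\cC\bigl((Z,N),(X\times_S Y,R\cHom(p_X^*L,p_Y^!M))\bigr),
\]
natural in $(Z,N)$; this is exactly the statement that the displayed object is the internal mapping object. First I would match the two sides on underlying correspondences. A $1$-morphism on the left is a correspondence $Z\times_S X\xleftarrow{\vecl e}C\xrightarrow{\vecr e}Y$; writing $z,x,y$ for the three legs $C\to Z$, $C\to X$, $C\to Y$ (so that $\vecl e=(z,x)$ and $\vecr e=y$), this is precisely the data of the correspondence $Z\xleftarrow{z}C\xrightarrow{(x,y)}X\times_S Y$ underlying a $1$-morphism on the right. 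Since a morphism over $S$ between schemes separated of finite type over $S$ is again separated of finite type, the map $(x,y)$ lies in the six-functor formalism and satisfies $p_X\circ(x,y)=x$, $p_Y\circ(x,y)=y$.

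It remains to match the sheaf-level data over this common correspondence. Using that the external product over $S$ pulls back to the ordinary tensor product, $(z,x)^*(N\boxtimes_S L)\simeq z^*N\otimes x^*L$, so the left-hand datum is a map $w\colon z^*N\otimes x^*L\to y^!M$ in $D(C,\Lambda)$, while the right-hand datum is a map $u\colon z^*N\to (x,y)^!R\cHom(p_X^*L,p_Y^!M)$. These correspond bijectively via the tensor-hom adjunction together with the six-functor identity
\[
(x,y)^!R\cHom(p_X^*L,p_Y^!M)\simeq R\cHom\bigl((x,y)^*p_X^*L,(x,y)^!p_Y^!M\bigr)=R\cHom(x^*L,y^!M).
\]
The identity $f^!R\cHom(A,B)\simeq R\cHom(f^*A,f^!B)$ used here holds in this generality and follows from the projection formula: for a test object $T$ one has $\Hom(T,f^!R\cHom(A,B))=\Hom(f_!T,R\cHom(A,B))=\Hom(f_!T\otimes A,B)=\Hom(f_!(T\otimes f^*A),B)=\Hom(T\otimes f^*A,f^!B)=\Hom(T,R\cHom(f^*A,f^!B))$. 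No finiteness on $L$, $M$, $N$ is needed.

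Finally I would upgrade this object-level bijection to an isomorphism of categories and check naturality. A $2$-morphism between two left-hand $1$-morphisms is a proper map $p\colon C\to C'$ compatible with the legs to $Z\times_S X$ and $Y$ and satisfying the sheaf-level condition of Construction \ref{c.C}; over $S$ this is exactly a proper map compatible with the legs to $Z$ and $X\times_S Y$, i.e. a $2$-morphism between the corresponding right-hand $1$-morphisms. The two sheaf-level conditions — each assembled from $\adj$, the isomorphism $p_!\simeq p_*$, and the counit $p_!p^!\to\id$ — correspond under the tensor-hom adjunction and the identity above by naturality of all these isomorphisms in $p$; verifying that this diagram commutes is the main technical point, though it is purely formal. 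This gives an isomorphism of hom-categories compatible with vertical composition. Naturality in $(Z,N)$ then reduces to compatibility of the bijection with composition of cohomological correspondences, which unwinds to the base-change isomorphisms defining composition in Construction \ref{c.C} and is routine. Together these establish that $\otimes$ is closed with the asserted internal mapping object.
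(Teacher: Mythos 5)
Your proof is correct and follows essentially the same route as the paper: both identify a morphism out of the tensor product with a pair consisting of a correspondence $C\to Z\times_S X\times_S Y$ (read with the appropriate legs grouped) and a global section of $c^!$ applied to one side of the tensor-hom adjunction, then transport the sheaf-level datum across that adjunction using $f^!R\cHom(A,B)\simeq R\cHom(f^*A,f^!B)$, with $2$-morphisms sent to themselves. The paper states this more tersely (it does not spell out the projection-formula argument or the compatibility check for $2$-morphisms), but the underlying argument is the same.
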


\begin{proof}
We construct an isomorphism of categories
\[F\colon \Hom((X,L)\otimes (Y,M),(Z,N))\simeq \Hom((X,L),\cHom((Y,M),(Z,N)))\]
as follows. An object of the source (resp.\ target) is a pair
$(C\xrightarrow{c} X\times_S Y\times_S Z,u)$, where $u$ belongs to
$H^0(C,c^!-)$ applied to left-hand (resp.\ right-hand) side of the
isomorphism
\[\alpha\colon R\cHom(p_X^*L\otimes p_Y^*M,p_Z^!N)\simeq R\cHom(p_X^*L, R\cHom(p_Y^*M,p_Z^!N)).\]
Here $p_X,p_Y,p_Z$ denote the projections from $X\times_S Y\times_S Z$. We
define $F$ by $F(c,u)=(c,u')$, where $u'$ is the image of $u$ under the map
induced by $\alpha$, and $F(p)=p$ for every morphism $p$ in the source of
$F$.
\end{proof}

For an object $(X,L)$ of $\cC$ and a morphism $f\colon X\to X'$ of schemes
separated of finite type over $S$, we let
\[f_\natural=(\id_X,f)_\natural=((\id_X,f),L\xrightarrow{\adj} f^!f_!L)\colon
(X,L)\to (X',f_!L).\]

\begin{lemma}\label{l.adj}
Let $(X,L)$ be an object of $\cC$ and let $f\colon X\to X'$ be a proper
morphism of schemes separated of finite type over $S$. Then
$f_\natural\colon (X,L)\to (X',f_*L)$ admits the right adjoint
$f^\natural=((f,\id_{X}),f^*f_*L\xrightarrow{\adj} L)\colon (X',f_*L)\to
(X,L)$.
\end{lemma}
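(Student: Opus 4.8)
The plan is to deduce the lemma from Remark \ref{r.adjoint}(a). Recall that $\cC=\cC_{S,\Lambda}$ is the Grothendieck construction $\cC_F$ of the functor $F\colon \cB_S\to \Cat^\co$ sending $X$ to $D(X,\Lambda)$ and a correspondence $c=(\vecl c,\vecr c)$ to $\vecr c_!\vecl c^*$. Under this identification, $f_\natural$ is precisely the morphism attached by the Grothendieck construction to the correspondence $\bar f=(\id_X,f)\colon X\to X'$ in $\cB_S$, because $F(\bar f)(L)=f_!L\simeq f_*L$ and the identity $\id_{f_*L}$ corresponds, under the adjunction $\vecr c_!\dashv\vecr c^!$, to the unit $L\to f^!f_*L$ appearing in the definition of $f_\natural$. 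Hence it suffices, by Remark \ref{r.adjoint}(a), to produce a right adjoint of $\bar f$ in $\cB_S$ and to check that transporting it along $F$ recovers $f^\natural$.

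I would take as candidate right adjoint the transposed correspondence $\bar f^!=(f,\id_X)\colon X'\to X$, with unit $\eta\colon\id_X\to\bar f^!\circ\bar f$ the diagonal $\Delta_f\colon X\to X\times_{X'}X$ and counit $\epsilon\colon\bar f\circ\bar f^!\to\id_{X'}$ the map $f\colon X\to X'$. Here I use that $\bar f^!\circ\bar f$ has underlying correspondence $X\xleftarrow{\mathrm{pr}_1}X\times_{X'}X\xrightarrow{\mathrm{pr}_2}X$ and that $\bar f\circ\bar f^!$ has underlying correspondence $X'\xleftarrow{f}X\xrightarrow{f}X'$, both computed from \eqref{e.comp}. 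The maps $\eta$ and $\epsilon$ are legitimate $2$-morphisms of $\cB_S$ because the relevant morphisms of schemes are proper: $\Delta_f$ is a closed immersion, since $f$ is separated (being a morphism between schemes separated over $S$), and $f$ is proper by hypothesis. This is the standard adjunction between a span of the form $(\id,f)$ and its transpose in a bicategory of correspondences; the only constraint particular to $\cB_S$ is the properness of the structure $2$-morphisms, which holds here.

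Granting the adjunction $\bar f\dashv\bar f^!$, Remark \ref{r.adjoint}(a) produces the right adjoint $(\bar f^!,F(\eta)(L))$ of $f_\natural$. Unwinding $F$ identifies $\bar f^!$ with the correspondence $(f,\id_X)$ and computes $F(\eta)(L)\colon f^*f_*L\to L$ to be the counit of the adjunction $(f^*,f_*)$, which is exactly the second component of the $f^\natural$ in the statement, so the two descriptions agree. The one point requiring genuine care is the verification of the two triangle identities for $\eta$ and $\epsilon$ in $\cB_S$: after whiskering they become equalities of proper morphisms that reduce, on underlying schemes, to elementary compatibilities among $\Delta_f$, the projections $X\times_{X'}X\to X$, and $f$. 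I expect this bookkeeping, which uses the base-change isomorphisms $\vecl d^*\vecr c_!\simeq\vecr c'_!\vecl d'^*$ implicit in the composition of correspondences, to be the main (though routine) obstacle; no geometric input beyond the properness of $\Delta_f$ and $f$ enters.
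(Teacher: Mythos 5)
Your proposal is correct and matches the paper's own proof, which likewise exhibits the adjunction via Remark \ref{r.adjoint}(a) with unit given by the diagonal $X\to X\times_{X'}X$ and counit given by $f$ itself. The extra bookkeeping you flag (triangle identities, properness of the diagonal, identification of $F(\eta)(L)$ with the counit $f^*f_*L\to L$) is exactly the routine verification the paper leaves implicit.
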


\begin{proof}
The counit $f_\natural f^\natural\to \id_{(X',f_*L)}$ is given by $f$ and
the unit $\id_{(X,L)}\to f^\natural f_\natural$ is given by the diagonal
$X\to X\times_{X'} X$. (This is an example of Remark \ref{r.adjoint} (a).)
\end{proof}

\begin{construction}[$!$-pushforward]\label{c.push}
Consider a commutative diagram of schemes separated of finite type over $S$
\begin{equation}\label{e.diag}
\xymatrix{X\ar[d]_{f} & C\ar[l]_{\vecl{c}}\ar[d]^{p}\ar[r]^{\vecr{c}} & Y\ar[d]^{g} \\
X' & C'\ar[l]_{\vecl{c'}}\ar[r]^{\vecr{c'}} & Y' }
\end{equation}
such that $q\colon C\to X\times_{X'} C'$ is proper. Let $(c,u)\colon
(X,L)\to (Y,M)$ be a cohomological correspondence above $c$. Let
$p^\sharp=(f,p,g)$. By Lemma \ref{l.push}, we have a unique cohomological
correspondence $(c',p^\sharp_!u)\colon (X',f_!L')\to (Y',g_!M')$ above $c'$
such that $q$ defines a $2$-morphism in $\cC$:
\[\xymatrix{(X,L)\ar[r]^{(c,u)}\ar[d]_{f_\natural} & (Y,M) \ar[d]^{g_\natural} \\
(X',f_!L)\ar[r]^{(c',p^\sharp_!u)} & (Y',g_!M).\ultwocell\omit{^{q}}}
\]
For a more explicit construction of $p^\sharp_!u$, see \cite[Construction
7.16]{six}. We will often be interested in the case where $f$, $g$, and $p$
are proper. In this case we write $p^\sharp_*u$ for $p^\sharp_!u$.

This construction is compatible with horizontal and vertical compositions.
\end{construction}

\subsection{Dualizable objects}\label{s.23}
Let $S$ and $\Lambda$ be as in Subsection \ref{s.22}. Next we study
dualizable objects of $\cC=\cC_{S,\Lambda}$.

\begin{prop}\label{p.dualizable}
Let $(X,L)$ be a dualizable object of $\cC$.
\begin{enumerate}
\item The dual of $(X,L)$ is $(X,D_{X/S} L)$ and the biduality morphism
    $L\to D_{X/S}D_{X/S} L$ is an isomorphism. Moreover, for any object
    $(Y,M)$ of $\cC$, the canonical morphisms
\begin{gather}
\label{e.DL} D_{X/S}L\boxtimes_S M\to R\cHom(p_X^*L,p_Y^!M),\\
\notag L\boxtimes_S D_{Y/S}M\to R\cHom(p_Y^*M,p_X^!L),\\
\notag D_{X/S} L\boxtimes_S D_{Y/S} M\to D_{X\times_S Y/S}(L\boxtimes_S M)
\end{gather}
are isomorphisms. Here $p_X\colon X\times_X Y\to X$ and $p_Y\colon
X\times_S Y\to Y$ are the projections.
\item For every morphism of schemes $g\colon Y\to Y'$ separated of finite
    type over $S$ and all $M\in D(Y,\Lambda)$, $M'\in D(Y',\Lambda)$, the
    canonical morphisms
    \begin{gather*}
    L\boxtimes_S g_*M\to (\id_X\times_S g)_* (L\boxtimes_S M),\\
    L\boxtimes_S g^! M'\to (\id_X\times_S g)^! (L\boxtimes_S M')
    \end{gather*}
    are isomorphisms. Moreover, for morphisms of schemes $f\colon X\to X'$
and $f'\colon X''\to X$ separated of finite type over $S$ such that
$(X',f_!D_{X/S}L)$ and $(X'',f'^*D_{X/S}L)$ are dualizable, and $M\in
D(Y,\Lambda)$, the
    canonical morphisms
    \begin{gather*}
    f_*L\boxtimes_S M\to (f\times_S \id_Y)_* (L\boxtimes_S M),\\
    f'^!L\boxtimes_S M\to (f'\times_S \id_Y)^! (L\boxtimes_S M)
    \end{gather*}
    are isomorphisms.
\item If $L\in D^+(X,\Lambda)$, then $L$ is locally acyclic over $S$.
\item If $R\Delta^!$ commutes with small direct sums and $U$ has finite
    $\Lambda$-cohomological dimension for every affine scheme $U$ \'etale
    over $X$, then $L$ is $c$-perfect. Here $\Delta\colon X\to X\times_S
    X$ is the diagonal.
\end{enumerate}
\end{prop}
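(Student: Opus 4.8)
The plan is to extract part~(a) from the abstract theory of Subsection~\ref{ss.1} together with the computation of internal mapping objects in Lemma~\ref{l.Hom}, and then to read off (b)--(d) as its geometric shadows. For (a), take $(Y,M)=1_\cC=(S,\Lambda_S)$ in Lemma~\ref{l.Hom}; using $X\times_S S=X$ with second projection $a_X$, this gives $\cHom((X,L),1_\cC)=(X,R\cHom(L,a_X^!\Lambda_S))=(X,D_{X/S}L)$, which by Remark~\ref{r.dual2}(a) is the dual. The three maps of \eqref{e.DL} are then the second components of the equivalences of Remark~\ref{r.dual2}: the first from $\cHom((X,L),(Y,M))\simeq(Y,M)\otimes(X,L)\spcheck$; the second from $\cHom((Y,M),(X,L))\simeq\cHom((Y,M),1_\cC)\otimes(X,L)$, legitimate because $\cHom((Y,M),1_\cC)=(Y,D_{Y/S}M)$ exists (Lemma~\ref{l.Hom}), so that Remark~\ref{r.dual2}(b) applies; and the third from $\cHom((X,L)\otimes(Y,M),1_\cC)\simeq\cHom((Y,M),1_\cC)\otimes\cHom((X,L),1_\cC)$. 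Reordering factors by the symmetry constraint turns each into the canonical sheaf-theoretic morphism, and biduality is invertible because the categorical biduality morphism is invertible for every dualizable object. The one thing to verify is that the abstract comparison $(Y,M)\otimes(X,L)\spcheck\to\cHom((X,L),(Y,M))$, adjoint to evaluation, agrees on second components with the map of \eqref{e.DL}; I would settle this by unwinding $\ev_{(X,L)}$ and the adjunction of Lemma~\ref{l.Hom} inside the Grothendieck construction.

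Part~(b) records that dualizability makes $L\boxtimes_S(-)$ commute with the remaining operations. For the two morphisms along $\id_X\times_S g$ I would use that, by Remark~\ref{r.dual2}(a), $-\otimes(X,L)$ is simultaneously left and right adjoint to $-\otimes(X,D_{X/S}L)$; the resulting invertibility of the relevant mate (Beck--Chevalley) $2$-morphisms, combined with the exchange isomorphisms $g^!D_{Y'/S}\simeq D_{Y/S}g^*$ and $g_*D_{Y/S}\simeq D_{Y'/S}g_!$ (valid since $g^!K_{Y'/S}\simeq K_{Y/S}$) and the Künneth isomorphisms of (a), forces the base-change maps to be isomorphisms; crucially this uses only dualizability, not $L\in D^+$. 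For the two morphisms along $f\times_S\id_Y$ the extra hypotheses provide auxiliary dualizable objects: by the exchange formulas and biduality from (a), the dual of $(X',f_!D_{X/S}L)$ is $(X',f_*L)$ and the dual of $(X'',{f'}^{*}D_{X/S}L)$ is $(X'',{f'}^{!}L)$, so these objects are dualizable and the two isomorphisms follow by applying (a) and the $\id_X\times_S g$ case to them.

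Part~(c) is the main obstacle, since it converts dualizability into the pointwise condition that $L_x\to R\Gamma(X_{(x)t},L)$ be an isomorphism on Milnor fibers. My plan is to feed the base-change isomorphisms of (b) into this condition. Working étale-locally on $X$ and after replacing $S$ by a strict localization, I would write $X_{(x)}$ and the algebraic geometric point $t$ as cofiltered limits of finite-type $S$-schemes and express $R\Gamma(X_{(x)t},L)$ as the corresponding filtered colimit; each finite stage is governed by a base-change isomorphism of the shape furnished by (b) with $\id_X\times_S g$, and in the colimit these identify $R\Gamma(X_{(x)t},L)$ with $L_x$. The hypothesis $L\in D^+$ enters precisely here, to commute $R\Gamma$ with the limit of schemes. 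The genuinely hard input underlying both (b) and (c) is the production of these base-change isomorphisms from dualizability; concretely I expect to unwind the correspondences $\ev_{(X,L)}$ and $\coev_{(X,L)}$ and to show that their triangle identities force the relevant base-change $2$-morphisms to be invertible.

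For part~(d) I would use the diagonal $\Delta\colon X\to X\times_S X$ and the identity $R\cHom(L,M)\simeq\Delta^!(D_{X/S}L\boxtimes_S M)$, which follows from the first isomorphism of (a) (applied with $Y=X$ and the two projections $p_1,p_2\colon X\times_S X\to X$) together with $\Delta^!R\cHom(p_1^*L,p_2^!M)\simeq R\cHom(\Delta^*p_1^*L,\Delta^!p_2^!M)=R\cHom(L,M)$. Since $\boxtimes_S$ and, by hypothesis, $R\Delta^!$ commute with small direct sums, $R\cHom(L,-)$ commutes with small direct sums; since every affine $U$ étale over $X$ has finite $\Lambda$-cohomological dimension, so does $R\Gamma(X,-)$, whence $R\Hom_{D(X,\Lambda)}(L,-)=R\Gamma(X,R\cHom(L,-))$ commutes with small direct sums, i.e. $L$ is a compact object of $D(X,\Lambda)$. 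A standard dévissage then upgrades compactness to $c$-perfection, the crux being the extraction of the compactness of $R\cHom(L,-)$ from the diagonal identity.
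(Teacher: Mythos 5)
Your proposal is correct and follows essentially the same route as the paper: (a) from the abstract duality formalism plus the identification of internal mapping objects in Lemma~\ref{l.Hom}; (b) by rewriting $L\boxtimes_S(-)$ as $R\cHom(p_X^*D_{X/S}L,p^!(-))$ via biduality and \eqref{e.DL} so that the base-change maps become standard $R\cHom$ compatibilities (the paper writes these chains out directly rather than in your adjunction/mate language, but the content is identical); (c) by the limit argument reducing the Milnor-fiber condition to the first isomorphism of (b) for quasi-finite maps of affine schemes étale over $S$, which is exactly the paper's Lemma~\ref{l.Saito}; and (d) by the diagonal identity $R\cHom(L,M)\simeq\Delta^!(D_{X/S}L\boxtimes_S M)$ and compactness of $\Lambda_X$.
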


Following \cite[XVII D\'efinition 7.7.1]{ILO} we say $L\in D(X,\Lambda)$ is
\emph{$c$-perfect} if there exists a finite stratification $(X_i)$ of $X$ by
constructible subschemes such that for each $i$, $L|_{X_i}\in
D(X_i,\Lambda)$ is locally constant of perfect values. For $\Lambda$
Noetherian, ``$c$-perfect'' is equivalent to ``$\in D_\cft$''.

The condition that $R\Delta^!$ commutes with small direct sums is satisfied
if
\begin{itemize}
\item[(*)] $S$ is Noetherian finite-dimensional and $m\Lambda=0$ with $m$
    invertible on~$S$,
\end{itemize}
by Lemma \ref{l.directsum} below and \cite[XVIII\textsubscript{A} Corollary
1.4]{ILO}. Moreover, the proof below shows that the assumption $L\in
D^+(X,\Lambda)$ in (c) can be removed under condition (*).

\begin{proof}
(a) follows from Remarks \ref{r.dual}, \ref{r.dual2} and the identification
of internal mapping objects (Lemma \ref{l.Hom}). Via biduality and
\eqref{e.DL}, the morphisms in (b) can be identified with the isomorphisms
\begin{gather*}
R\cHom(p'^*_XL\spcheck,p_{Y'}^!g_*M)\simeq R\cHom(p'^*_XL\spcheck,g_{X*}p_Y^!M)\simeq g_{X*} R\cHom(p_X^*L\spcheck,p_Y^!M),\\
R\cHom(p^*_XL\spcheck,p_{Y}^!g^!M')\simeq R\cHom(g_X^* p'^*_XL\spcheck,g_{X}^!p_{Y'}^!M')\simeq g_{X}^! R\cHom(p'^*_XL\spcheck,p_{Y'}^!M'),\\
R\cHom(p^*_{X'}f_!L\spcheck,p'^!_{Y}M)\simeq R\cHom(f_{Y!}p^*_XL\spcheck,p'^!_{Y}M)\simeq f_{Y*} R\cHom(p_X^*L\spcheck,p_Y^!M),\\
R\cHom(p^*_{X''}f'^*L\spcheck,p''^!_{Y}M')\simeq  R\cHom(f'^*_Yp_X^*L\spcheck,f'^!_Yp_Y^!M)\simeq f'^!_Y R\cHom(p_X^*L\spcheck,p_Y^!M),
\end{gather*}
where $L\spcheck=D_{X/S}L$, $g_X=\id_X\times_S g$, $f_Y=f\times_S \id_Y$,
$f'_{Y}=f'\times_S \id_Y$, and $p'_X\colon X\times_S Y'\to X$, $p'_Y\colon
X'\times_S Y\to X'$, $p''_Y\colon X''\times_S Y\to X''$ are the projection.
(c) follows from the first isomorphism in (b) and Lemma \ref{l.Saito} below.
For (d), note that for $M\in D(X,\Lambda)$, $\Hom(\Lambda_X,\Delta^!
(D_{X/S} L\boxtimes_S M))\simeq \Hom(L,M)$ by \eqref{e.DL}. Since $\Delta^!$
commutes with small direct sums and $\Lambda_X$ is a compact object of
$D(X,\Lambda)$, it follows that $L$ is a compact object, which is equivalent
to being $c$-perfect by \cite[Proposition 6.4.8]{BS}.
\end{proof}

The following is a variant of \cite[Theorem 7.6.9]{Fu} and \cite[Proposition
8.11]{Saito}.

\begin{lemma}\label{l.Saito}
Let $X\to S$ be a morphism of coherent schemes and let $L\in D(X,\Lambda)$.
Assume that for every quasi-finite morphism $g\colon Y\to Y'$ of affine
schemes with $Y'$ \'etale over $S$, the canonical morphism $L\boxtimes_S
g_*\Lambda_Y\to (\id_X\times_S g)_* (L\boxtimes_S \Lambda_Y)$ is an
isomorphism. Assume either $L\in D^+(X,\Lambda)$ or that $(\id_X\times_S
g)_*$ has bounded $\Lambda$-cohomological dimension. Then $L$ is locally
acyclic over $S$.
\end{lemma}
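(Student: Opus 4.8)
The plan is to reduce the definition of local acyclicity to the hypothesized pushforward isomorphism by passing to stalks and then to a limit realizing the Milnor fibre. Fix a geometric point $x\to X$ with image $s\to S$ and an algebraic geometric point $t\to S_{(s)}$; I must show that the canonical map $L_x\to R\Gamma(X_{(x)t},L)$ is an isomorphism.

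First I would rewrite the hypothesis at a well-chosen stalk. Let $g\colon Y\to Y'$ be quasi-finite with $Y'$ affine and \'etale over $S$, and pick a geometric point $s'\to Y'$ over $s$, so that $(Y')_{(s')}\simeq S_{(s)}$ and, via the \'etale projection $X\times_S Y'\to X$, the strict localization $(X\times_S Y')_{((x,s'))}\simeq X_{(x)}$. Writing $T\colonequals Y\times_{Y'} S_{(s)}$, which is quasi-finite over $S_{(s)}$, the stalk at $(x,s')$ of the canonical morphism in the statement becomes
\[
L_x\otimes R\Gamma(T,\Lambda)\longrightarrow R\Gamma(X_{(x)}\times_{S_{(s)}} T,L),
\]
an isomorphism by assumption. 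The two supporting computations are $(g_*\Lambda_Y)_{s'}\simeq R\Gamma(T,\Lambda)$ and $((\id_X\times_S g)_*(L\boxtimes_S\Lambda_Y))_{(x,s')}\simeq R\Gamma(X_{(x)}\times_{S_{(s)}} T,L)$, the latter resting on the identification $(X\times_S Y)\times_{X\times_S Y'} X_{(x)}\simeq X_{(x)}\times_{S_{(s)}} T$ and on the fact that $L\boxtimes_S\Lambda_Y$ restricts to the pullback of $L$ from $X_{(x)}$.

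Next I would let $T$ vary to approximate $t$. The algebraic geometric point $t\to S_{(s)}$ can be written as a cofiltered limit $t\simeq \lim_\lambda T_\lambda$ of affine schemes quasi-finite over $S_{(s)}$ with affine transition maps, where each $T_\lambda$ descends to a quasi-finite morphism $g_\lambda\colon Y_\lambda\to Y'_\lambda$ of affine schemes with $Y'_\lambda$ \'etale over $S$ and $T_\lambda\simeq Y_\lambda\times_{Y'_\lambda} S_{(s)}$; this uses the presentation of $S_{(s)}$ as a limit of affine \'etale neighbourhoods together with Zariski's main theorem and standard finite-presentation arguments. Applying the displayed isomorphism to each $g_\lambda$ and taking the filtered colimit over $\lambda$, continuity of \'etale cohomology gives $\operatorname{colim}_\lambda R\Gamma(T_\lambda,\Lambda)\simeq R\Gamma(t,\Lambda)\simeq\Lambda$ on the left, while on the right $\operatorname{colim}_\lambda R\Gamma(X_{(x)}\times_{S_{(s)}} T_\lambda,L)\simeq R\Gamma(X_{(x)t},L)$, since $X_{(x)t}\simeq\lim_\lambda X_{(x)}\times_{S_{(s)}} T_\lambda$ and $L$ is pulled back from the fixed scheme $X_{(x)}$. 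In the limit one obtains $L_x\simeq R\Gamma(X_{(x)t},L)$, which after checking that it coincides with the canonical cospecialization map yields local acyclicity.

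The main obstacle I anticipate is twofold. The geometric heart is the approximation of $t$ by a cofiltered system of quasi-finite $S_{(s)}$-schemes that descend to \'etale neighbourhoods of $s$ in $S$ and whose limit is \emph{exactly} $t$, with no spurious components; isolating the correct quasi-finite models requires care with localizations and Zariski's main theorem. The analytic heart is the commutation of $R\Gamma$ with the filtered colimit in the unbounded case: this is immediate when $L\in D^+(X,\Lambda)$ by degreewise continuity, and in general it is precisely what the hypothesis of bounded $\Lambda$-cohomological dimension of $(\id_X\times_S g)_*$ is there to guarantee, via a truncation argument.
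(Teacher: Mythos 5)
Your proposal is correct and follows essentially the same route as the paper: both identify the Milnor-fibre map $L_x\to R\Gamma(X_{(x)t},L)$ with the hypothesised morphism $L\boxtimes_S g_*\Lambda\to(\id_X\times_S g)_*(L\boxtimes_S\Lambda)$ by writing the algebraic geometric point $t\to S_{(s)}$ as a cofiltered limit of quasi-finite schemes that descend to \'etale neighbourhoods of $s$, and then passing to the limit. The paper compresses all of this into one sentence (``by the assumption and passing to the limit''), working with the morphism of complexes $L|_{X_s}\to i_X^*g_{X*}(L|_{X_t})$ on $X_s$ rather than stalk by stalk, whereas you spell out the limit and stalk identifications explicitly.
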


\begin{proof}
Let $s\to S$ be a geometric point and let $g\colon t\to S_{(s)}$ be an
algebraic geometric point. Consider the diagram
\[\xymatrix{X_t\ar[r]^{g_X}\ar[d] & X_{(s)}\ar[d] & X_s\ar[l]_{i_X}\ar[d]\\
t\ar[r]^g & S_{(s)} & s\ar[l]_i}
\]
obtained by base change. By the assumption and passing to the limit, the
morphism $L|_{X_s}\to i_X^*g_{X*}(L|_{X_t})$ can be identified with
$L\boxtimes_S-$ applied to $\Lambda_s\to i^*g_*\Lambda_t$, which is an
isomorphism.
\end{proof}

\begin{lemma}\label{l.directsum}
Let $i\colon Y\to X$ be a closed immersion of finite presentation. Assume
that $i^!$ has finite $\Lambda$-cohomological dimension, then $Ri^!$
commutes with small direct sums.
\end{lemma}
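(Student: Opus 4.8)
The plan is to reduce everything to the statement that $Rj_*$ commutes with small direct sums, where $j\colon U\to X$ denotes the open immersion complementary to $i$. The first thing to note is that the finite presentation hypothesis on $i$ makes $U$ a quasi-compact open, so $j$ is a quasi-compact separated morphism; this is exactly what guarantees that étale cohomology along $j$ commutes with filtered colimits of torsion sheaves, the input I will need at the level of the heart.

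The key device is the localization triangle, functorial in $L\in D(X,\Lambda)$:
\[ i_*i^!L\to L\to Rj_*j^*L\to. \]
Since $i$ is a closed immersion, $i_*=Ri_*$ is exact, fully faithful (hence conservative), and commutes with direct sums, while $j^*$ is exact and commutes with direct sums. For a family $\{L_\alpha\}$, I would compare the direct sum of these triangles with the triangle attached to $\bigoplus_\alpha L_\alpha$: this is a morphism of triangles whose middle arrow is the identity, whose left arrow is $i_*$ applied to the comparison map $\bigoplus_\alpha i^!L_\alpha\to i^!\bigoplus_\alpha L_\alpha$, and whose right arrow is the comparison map for $Rj_*$ evaluated on the objects $j^*L_\alpha$. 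By the two-out-of-three property for morphisms of triangles, and since $i_*$ reflects isomorphisms, the map for $i^!$ is an isomorphism if and only if the map for $Rj_*$ is. As $j^*$ is essentially surjective, this reduces the lemma to showing that $Rj_*$ commutes with direct sums.

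Next I would transport the finiteness hypothesis. The functor $i^!$ is left t-exact, being right adjoint to the exact functor $i_*$, and by assumption it has finite $\Lambda$-cohomological dimension; the same localization triangle then shows that $Rj_*j^*$, and hence $Rj_*$ (using exactness and essential surjectivity of $j^*$), is left t-exact of finite cohomological dimension, say $\le d$. It remains to prove the general fact that a left t-exact triangulated functor $F=Rj_*$ of finite cohomological dimension $\le d$, with $j$ qcqs, commutes with direct sums. On the heart this holds because $R^kj_*$ commutes with direct sums of torsion sheaves for $j$ qcqs and only finitely many $R^kj_*$ are nonzero; a dévissage along the canonical filtration then covers all bounded complexes. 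For an unbounded complex I would run a truncation argument: fixing a degree $n$, the bound $\le d$ ensures that $H^n(F(-))$ depends only on a bounded truncation $\tau_{[m',m]}(-)$ with $m'\le n-d-1$ and $m\ge n+1$; since truncation functors commute with direct sums, computing $H^n(F\bigoplus_\alpha L_\alpha)$ and $\bigoplus_\alpha H^n(FL_\alpha)$ both reduce to the already-treated bounded case, so the comparison map is an isomorphism in every degree.

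The main obstacle is precisely this passage from bounded to unbounded complexes: for a functor of infinite cohomological dimension the comparison map genuinely can fail to be an isomorphism, and it is only the transported hypothesis on $Rj_*$ that licenses the truncation estimates. A secondary point requiring care is the verification that étale cohomology along the qcqs morphism $j$ commutes with arbitrary direct sums on the heart, which is where the finite presentation of $i$ is used in an essential way.
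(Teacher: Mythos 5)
Your proof is correct and follows essentially the same route as the paper: the paper likewise passes to the complementary open immersion $j$ via the localization triangle and reduces the claim to $Rj_*$ commuting with small direct sums under the finite cohomological dimension hypothesis, citing \cite[Lemma 1.10]{LZdual} for that standard fact. The only difference is that you spell out the standard truncation/d\'evissage argument that the paper leaves to the reference.
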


\begin{proof}
Let $j$ be the complementary open immersion. It suffices to show that $Rj_*$
commutes with small direct sums under the condition that $j_*$ has finite
$\Lambda$-cohomological dimension. This is standard. See for example
\cite[Lemma 1.10]{LZdual}.
\end{proof}

\begin{lemma}\label{l.Hom1}
An object $(X,L)$ of $\cC$ is dualizable if and only if the canonical
morphism $L\boxtimes_S D_{X/S} L\to R\cHom(p_2^*L,p_1^!L)$ is an
isomorphism. Here $p_1$ and $p_2$ are the projections $X\times_S X\to X$.
\end{lemma}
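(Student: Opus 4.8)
The statement is an abstract-nonsense characterization of dualizability in $\cC$, and the natural strategy is to apply Lemma \ref{l.Hom0}, which reduces dualizability to the existence of the two internal mapping objects together with the splitting of a specific map $m$. Since Lemma \ref{l.Hom} already guarantees that $\cC$ is closed monoidal (so $\cHom((X,L),(Y,M))$ always exists), the first two conditions of Lemma \ref{l.Hom0} are automatic; only the split-epimorphism condition on $m\colon (X,L)\otimes \cHom((X,L),1_\cC)\to \cHom((X,L),(X,L))$ needs attention. The plan is therefore to unwind both $m$ and the morphism in the statement into explicit maps in $\cC$ and match them.

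First I would identify the objects. By Lemma \ref{l.Hom}, $\cHom((X,L),1_\cC)=\cHom((X,L),(S,\Lambda_S))=(X,D_{X/S}L)$, since $K_{X/S}=a_X^!\Lambda_S$ and $p_S^!\Lambda_S=K_{X/S}$. Likewise $\cHom((X,L),(X,L))=(X\times_S X,R\cHom(p_2^*L,p_1^!L))$. The tensor product $(X,L)\otimes(X,D_{X/S}L)$ equals $(X\times_S X,L\boxtimes_S D_{X/S}L)$. Thus $m$ is a cohomological correspondence over the identity correspondence of $X\times_S X$, and its sheaf component is exactly the canonical morphism $L\boxtimes_S D_{X/S}L\to R\cHom(p_2^*L,p_1^!L)$ appearing in the statement --- one verifies this by chasing the adjoint description of $m$ through the identification $\alpha$ of Lemma \ref{l.Hom} against the counit $\ev_X$ on $D_{X/S}L\otimes L$. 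This matching is the crux and the main obstacle: one must check that the base-correspondence component of $m$ is the identity correspondence (so that being a split epimorphism in $\cC$ is detected purely on the sheaf level) and that the sheaf component is literally the canonical comparison map.

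Granting this identification, the ``if'' direction is immediate: if the canonical morphism $L\boxtimes_S D_{X/S}L\to R\cHom(p_2^*L,p_1^!L)$ is an isomorphism, then $m$ is an isomorphism in $\cC$, hence a (split) epimorphism, and Lemma \ref{l.Hom0} yields dualizability with dual $(X,D_{X/S}L)$. For the ``only if'' direction, dualizability gives by Proposition \ref{p.dualizable}(a) that the dual is $(X,D_{X/S}L)$ and that the canonical morphism \eqref{e.DL} is an isomorphism for every $(Y,M)$; taking $(Y,M)=(X,L)$ produces precisely the desired isomorphism $L\boxtimes_S D_{X/S}L\to R\cHom(p_2^*L,p_1^!L)$. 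So the reverse implication can be read off directly from the already-established structure of dualizable objects, and no separate splitting argument is needed in that direction.

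The subtle point I would be most careful about is that a ``split epimorphism'' in the $2$-category $\cC$ admits a section only up to $2$-isomorphism, and $2$-morphisms in $\cC$ are constrained to be \emph{proper} morphisms of schemes over the identity correspondence. Since $m$ lives over the identity correspondence of $X\times_S X$, any section and the splitting $2$-isomorphisms also live over identity correspondences, so the condition genuinely reduces to the sheaf component $L\boxtimes_S D_{X/S}L\to R\cHom(p_2^*L,p_1^!L)$ being a split epimorphism in $D(X\times_S X,\Lambda)$. The last observation to record is that a split epimorphism here is automatically an isomorphism: the source and target are exchanged under the symmetry and biduality, so a splitting forces the map to be invertible, closing the gap between ``split epi'' in Lemma \ref{l.Hom0} and ``isomorphism'' in the statement.
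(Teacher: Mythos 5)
Your argument is correct and is essentially the paper's own proof: the ``only if'' direction is read off from the case $(Y,M)=(X,L)$ of Proposition~\ref{p.dualizable}(a), and the ``if'' direction combines Lemma~\ref{l.Hom0} with the identification of internal mapping objects from Lemma~\ref{l.Hom}, exactly as you describe. Your closing observation that a split epimorphism is automatically an isomorphism is not actually needed (and is the only step you leave unjustified), since the ``only if'' direction is already covered by Proposition~\ref{p.dualizable}(a).
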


\begin{proof}
The ``only if'' part is a special case of Proposition \ref{p.dualizable}
(a). The ``if'' part follows from Lemma \ref{l.Hom0} and the identification
of the internal mapping objects (Lemma \ref{l.Hom}).
\end{proof}

\begin{remark}
The evaluation and coevaluation maps for a dualizable object $(X,L)$ of
$\cC$ can be given explicitly as follows. The evaluation map $(X\times_S X,
D_{X/S} L\boxtimes_S L)\to (S,\Lambda)$ is given by $X\times_S
X\xleftarrow{\Delta} X\to S$ and the usual evaluation map
\[\Delta^*(D_{X/S}
L\boxtimes_S L)\simeq D_{X/S} L \otimes L\to K_{X/S},\]
where $\Delta$
denotes the diagonal. The coevaluation map $(S,\Lambda)\to (X\times_S X,
L\boxtimes_S D_{X/S} L)$ is given by $S\leftarrow
X\xrightarrow{\Delta}X\times_S X$ and $\id_L$ considered as a morphism
\[\Lambda_X\to R\cHom(L,L)\simeq \Delta^!R\cHom(p_2^*L,p_1^!L)\simeq
\Delta^!(L\boxtimes_S D_{X/S} L).
\]
\end{remark}

We can identify dualizable objects of $\cC$ under mild assumptions.

\begin{theorem}\label{t.dual}
Let $S$ be a Noetherian scheme, $\Lambda$ a Noetherian commutative ring with
$m\Lambda=0$ for $m$ invertible on $S$. Let $X$ be a scheme separated of
finite type over $S$, $L\in D_{\cft}(X,\Lambda)$. Then $(X,L)$ is a
dualizable object of $\cC$ if and only if $L$ is locally acyclic over $S$.
In this case the dual of $(X,L)$ is $(X,D_{X/S}L)$.
\end{theorem}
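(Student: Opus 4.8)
The plan is to prove the two implications separately, in both cases reading off the dual from Proposition \ref{p.dualizable}(a). The key observation is that, since $L\in D_{\cft}(X,\Lambda)$ has finite tor-amplitude, it is bounded with constructible cohomology; in particular $L\in D^+(X,\Lambda)$ and $L\in D^-_c(X,\Lambda)$, so it meets the hypotheses of both Proposition \ref{p.dualizable}(c) and Proposition \ref{p.D}.

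For the ``only if'' direction, suppose $(X,L)$ is dualizable. Since $L\in D^+(X,\Lambda)$, Proposition \ref{p.dualizable}(c) gives that $L$ is locally acyclic over $S$, and Proposition \ref{p.dualizable}(a) identifies the dual as $(X,D_{X/S}L)$. This direction is immediate and requires no new input.

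For the ``if'' direction, suppose $L$ is locally acyclic over $S$. By Lemma \ref{l.Hom1} it suffices to show that the canonical morphism
\[L\boxtimes_S D_{X/S}L\to R\cHom(p_2^*L,p_1^!L)\]
is an isomorphism, where $p_1,p_2\colon X\times_S X\to X$ are the two projections. This is precisely the content of Proposition \ref{p.D} applied with $Y=X$, taking the factor labelled $X$ there to be the second projection and the factor labelled $Y$ to be the first, and with both input complexes equal to $L$: the target $R\cHom(p_X^*L,p_Y^!M)$ becomes $R\cHom(p_2^*L,p_1^!L)$, and the source $D_{X/S}L\boxtimes_S M$ becomes $L\boxtimes_S D_{X/S}L$ by the symmetry of $\boxtimes_S$. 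One checks routinely that the canonical morphism of Lemma \ref{l.Hom1} is identified with that of Proposition \ref{p.D} under this relabelling. Once the isomorphism is established, Lemma \ref{l.Hom1} gives dualizability, and Proposition \ref{p.dualizable}(a) again identifies the dual as $(X,D_{X/S}L)$.

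The one substantive point, and the main obstacle, is that Proposition \ref{p.D} demands \emph{universal} local acyclicity, whereas the hypothesis only provides ordinary local acyclicity. To bridge this I would invoke that, for a constructible complex on a scheme of finite type over the Noetherian base $S$, local acyclicity coincides with strong local acyclicity, which is stable under arbitrary base change; hence $L$ is universally locally acyclic over $S$. This finiteness input, traceable to \cite[Th.\ finitude]{SGA4d} and recorded in \cite{LZdual}, is what lets the K\"unneth machinery of Subsection \ref{s.Kunn} apply to the constructible complex $L$, and it is the only nonformal ingredient in the argument.
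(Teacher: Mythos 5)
Your overall architecture is exactly that of the paper: the ``only if'' direction and the identification of the dual are read off from Proposition \ref{p.dualizable} (a) and (c), and the ``if'' direction reduces via Lemma \ref{l.Hom0}/\ref{l.Hom1} to the K\"unneth isomorphism of Proposition \ref{p.D} with $Y=X$ and $M=L$. You also correctly isolate the one nonformal point, namely that Proposition \ref{p.D} requires \emph{universal} local acyclicity while the hypothesis only gives local acyclicity.

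The gap is in how you close that point. You argue that local acyclicity coincides with strong local acyclicity (\cite[Lemma 4.7]{LZdual}, as quoted in Subsection \ref{s.Kunn}) and that strong local acyclicity ``is stable under arbitrary base change,'' hence $L$ is universally locally acyclic. The second clause is not a formal property of strong local acyclicity: it is a pointwise condition on Milnor fibers (with coefficients), and the Milnor fibers of $X\times_S S'\to S'$ at a geometric point are genuinely different from those of $X\to S$, since the strict henselizations of $S'$ do not map isomorphically to those of $S$. If strong local acyclicity were base-change stable by fiat, then every locally acyclic complex of finite type over any base would be universally locally acyclic, the constructibility hypothesis would be irrelevant, and the implication would be trivial --- whereas the paper explicitly invokes it as a theorem of Gabber (\cite[Corollary 6.6]{LZdual}: for $L\in D^b_c$ on a scheme of finite type over a Noetherian base, locally acyclic is equivalent to universally locally acyclic), proved by the machinery of nearby cycles over general bases, and the attribution to \cite[Th.\ finitude]{SGA4d} is also off (that reference handles the Artinian base case). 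Your proof becomes complete, and identical to the paper's, once you replace this justification by a citation of Gabber's theorem; as stated, the key step is asserted rather than proved.
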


We will use Gabber's theorem that for $X$ of finite type over $S$, $L\in
D^b_c(X,\Lambda)$ is locally acyclic if and only if it is universally
locally acyclic \cite[Corollary 6.6]{LZdual}.

\begin{proof}
We have already seen the last assertion and the ``only if'' part of the
first assertion in Parts (a) and (c) of Proposition \ref{p.dualizable}. The
``if'' part of the first assertion follows from Lemma \ref{l.Hom1},
Proposition \ref{p.D}, and Gabber's theorem.
\end{proof}

\begin{remark}
Without invoking Gabber's theorem, our proof and Proposition \ref{p.pull}
show that for $L\in D_\cft(X,L)$, $(X,L)$ is dualizable if and only if $L$
is universally locally acyclic over $S$.
\end{remark}

\begin{cor}\label{c.la}
For $S$, $\Lambda$, and $X$ as in Theorem \ref{t.dual} and $L\in
D_{\cft}(X,\Lambda)$ locally acyclic over $S$, $D_{X/S} L$ is locally
acyclic over $S$.
\end{cor}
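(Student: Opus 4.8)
The plan is to read the statement off the equivalence of Theorem \ref{t.dual} together with the stability of dualizability under duality. Since $L\in D_{\cft}(X,\Lambda)$ is locally acyclic over $S$, Theorem \ref{t.dual} shows that $(X,L)$ is a dualizable object of $\cC$ with dual $(X,D_{X/S}L)$. By Remark \ref{r.dual}, the dual of a dualizable object is again dualizable, so $(X,D_{X/S}L)$ is dualizable. Everything then reduces to feeding $(X,D_{X/S}L)$ back into the ``dualizable $\Rightarrow$ locally acyclic'' direction. Concretely, since $(X,D_{X/S}L)$ is already dualizable, Proposition \ref{p.dualizable}(c) yields local acyclicity of $D_{X/S}L$ as soon as we know $D_{X/S}L\in D^+(X,\Lambda)$; alternatively, knowing $D_{X/S}L\in D_{\cft}(X,\Lambda)$ lets us invoke the ``only if'' part of Theorem \ref{t.dual} directly.

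Thus the genuine content is the finiteness statement that the dual is again $c$-perfect, i.e.\ $D_{X/S}L\in D_{\cft}(X,\Lambda)$. I would extract this from Proposition \ref{p.dualizable}(d) applied to the dualizable object $(X,D_{X/S}L)$: it produces $c$-perfectness provided $R\Delta^!$ commutes with small direct sums and every affine $U$ \'etale over $X$ has finite $\Lambda$-cohomological dimension. The first hypothesis follows from Lemma \ref{l.directsum}, since $\Delta\colon X\to X\times_S X$ is a closed immersion of finite codimension (the relative dimension of $X/S$ being finite), so $\Delta^!$ has finite $\Lambda$-cohomological dimension. This already gives $D_{X/S}L\in D_{\cft}$, hence in particular $D_{X/S}L\in D^+$, closing the first paragraph.

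The main obstacle is the second cohomological-dimension input, which is \emph{not} available for an arbitrary Noetherian base: an infinite-dimensional Noetherian $S$ may carry affines of infinite $\Lambda$-cohomological dimension, so Proposition \ref{p.dualizable}(d) does not apply verbatim. To circumvent this I would use that both hypothesis and conclusion are local on $S$. Local acyclicity is tested at geometric points $x\to X$ and algebraic geometric points $t\to S_{(x)}$, so one may replace $S$ by its strict localizations $S_{(s)}$, which are spectra of strictly Henselian Noetherian local rings and hence finite-dimensional. Over such a base condition (*) holds, so the hypotheses of Proposition \ref{p.dualizable}(d), and equally those of the improved form of Proposition \ref{p.dualizable}(c) in which the $D^+$ assumption is dropped, are all satisfied, and one concludes. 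The point requiring care is that the formation of $D_{X/S}L$ is compatible with the base change $S_{(s)}\to S$; this is exactly where the local acyclicity of $L$ enters, ensuring that duality commutes with this localization so that the reduction is legitimate.
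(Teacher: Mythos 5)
Your first paragraph is precisely the paper's proof: by Theorem \ref{t.dual} and Remark \ref{r.dual}, $(X,D_{X/S}L)$ is dualizable, and one concludes by Proposition \ref{p.dualizable}(c). Where you go astray is in diagnosing what remains to be checked. The hypothesis of (c) is only $D_{X/S}L\in D^{+}(X,\Lambda)$, \emph{not} $D_{X/S}L\in D_{\cft}(X,\Lambda)$, and the $D^{+}$ statement is essentially free over an arbitrary Noetherian $S$: any $L\in D_\cft(X,\Lambda)$ is bounded (test its tor-amplitude against $\Lambda_X$), and $K_{X/S}=a_X^!\Lambda_S$ is bounded below, since writing $a_X=\bar a\circ j$ with $j$ an open immersion and $\bar a$ proper, the functor $\bar a_*$ has cohomological dimension at most twice the (finite) maximal fiber dimension, so its right adjoint $\bar a^{\,!}$ carries $D^{\ge 0}$ into $D^{\ge -2d}$. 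Hence $R\cHom(L,K_{X/S})\in D^{+}$ and Proposition \ref{p.dualizable}(c) applies as stated. The entire second half of your proposal is therefore an unnecessary detour: no $c$-perfectness of the dual, no appeal to Proposition \ref{p.dualizable}(d), and no localization on $S$ are needed.

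Within the detour there is also an unjustified step: you assert that $\Delta^!$ has finite $\Lambda$-cohomological dimension because $\Delta$ is a closed immersion ``of finite codimension.'' Codimension is irrelevant; Lemma \ref{l.directsum} takes finite cohomological dimension of $i^!$ as a \emph{hypothesis}, and verifying it amounts to bounding the cohomological dimension of $Rj_*$ for the complementary open immersion, which is exactly where finite-dimensionality of $S$ (condition (*)) and the reference to \cite{ILO} enter in the paper. Over an infinite-dimensional Noetherian base this claim is not available --- the same issue you correctly flag for the second hypothesis of (d). That said, your final localization argument does repair this and yields a valid, if much heavier, alternative: the strict localizations $S_{(s)}$ are Noetherian local, hence finite-dimensional, so (*) holds over them; Proposition \ref{p.pull} (or Corollary \ref{c.dualbc}, whose proof depends only on Proposition \ref{p.pull} and Theorem \ref{t.dual}, so there is no circularity) gives dualizability of the pullback of $(X,D_{X/S}L)$; the strengthened form of (c) under (*) gives local acyclicity over each $S_{(s)}$; and since Milnor fibers are insensitive to the pro-\'etale base change $S_{(s)}\to S$, local acyclicity over $S$ follows. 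So your proposal does reach the statement, but by a route whose extra machinery buys nothing here compared with the one-line boundedness observation the paper's proof tacitly uses.
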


This was known under the additional assumption that $S$ is regular (and
excellent) \cite[Corollary 5.13]{LZdual} (see also \cite[B.6 2)]{BG} for $S$
smooth over a field). Our proof here is different from the one in
\cite{LZdual}. In fact, without invoking Gabber's theorem, our proof here
shows that $D_{X/S}$ preserves universal local acyclicity and makes no use
of oriented topoi.

\begin{proof}
By Theorem \ref{t.dual} and Remark \ref{r.dual}, $(X,D_{X/S}L)$ is
dualizable. We conclude by Proposition \ref{p.dualizable} (c).
\end{proof}

\begin{cor}\label{c.field}
Let $S$ be an Artinian scheme, $\Lambda$ and $X$ as in Theorem \ref{t.dual},
and $L\in D(X,\Lambda)$. Then $(X,L)$ is a dualizable object of $\cC$ if and
only if $L\in D_{\cft}(X,\Lambda)$.
\end{cor}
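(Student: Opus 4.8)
The plan is to derive both implications from Theorem~\ref{t.dual}, using that over a base of dimension zero local acyclicity is automatic. For the ``if'' part, take $L\in D_{\cft}(X,\Lambda)$. Since $S$ is Artinian, each strict localization $S_{(x)}$ is the spectrum of a strictly Henselian Artinian local ring and so has a single point; for an algebraic geometric point $t\to S_{(x)}$ the Milnor fiber $X_{(x)t}=X_{(x)}\times_{S_{(x)}}t$ is obtained from the strict localization $X_{(x)}$ by a nilpotent closed immersion and a purely inseparable extension, and neither operation changes \'etale cohomology. Hence $L_x\to R\Gamma(X_{(x)t},L)$ is an isomorphism for every $L$, so every $L$ is locally acyclic over $S$ and Theorem~\ref{t.dual} shows that $(X,L)$ is dualizable.

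For the ``only if'' part, suppose $(X,L)$ is dualizable; I must show $L\in D_{\cft}(X,\Lambda)$. The natural tool is Proposition~\ref{p.dualizable}(d): it yields that $L$ is $c$-perfect provided $R\Delta^!$ commutes with small direct sums and every affine scheme \'etale over $X$ has finite $\Lambda$-cohomological dimension. The first hypothesis holds because an Artinian $S$ satisfies condition~(*). The second can genuinely fail over $S$; already for $S=\Spec\mathbb{R}$ and $\Lambda=\Z/2$ the scheme $\Spec\mathbb{R}$ has infinite $\Lambda$-cohomological dimension. This is the main obstacle, and the idea is to remove it by passing to geometric points.

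Concretely, by topological invariance of the \'etale site I would replace $S$ by $S_{\mathrm{red}}$ and argue on connected components, reducing to the case $S=\Spec k$ with $k$ a field. Let $\bar k$ be a separable closure and consider the faithfully flat morphism $\Spec\bar k\to\Spec k$. Base change along it is a symmetric monoidal functor $\cC_{k,\Lambda}\to\cC_{\bar k,\Lambda}$ (this is exactly the statement that base change preserves duals, Proposition~\ref{p.pull}), so by Remark~\ref{r.functor} it sends the dualizable object $(X,L)$ to a dualizable object $(X_{\bar k},L_{\bar k})$. Over the separably closed field $\bar k$ every affine scheme of finite type has finite $\Lambda$-cohomological dimension by the Artin--Grothendieck affine Lefschetz bound, so condition~(*) together with this bound lets Proposition~\ref{p.dualizable}(d) apply and give $L_{\bar k}\in D_{\cft}(X_{\bar k},\Lambda)$.

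Finally I would descend: since $\Spec\bar k\to\Spec k$ is faithfully flat, constructibility of the cohomology sheaves and finiteness of the tor-amplitude of $L$ may both be tested after this base change (for tor-amplitude one checks stalks, and every geometric point of $X$ lies over one of $X_{\bar k}$), whence $L\in D_{\cft}(X,\Lambda)$. The heart of the argument is therefore the reduction to a separably closed residue field, which trades the possibly infinite cohomological dimension over $S$ for the finite bound needed in Proposition~\ref{p.dualizable}(d); the remaining ingredients---automatic local acyclicity, the affine Lefschetz bound, and faithfully flat descent of $D_{\cft}$---are standard.
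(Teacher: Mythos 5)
Your proof is correct and follows essentially the same route as the paper: the ``if'' direction combines the triviality of local acyclicity over an Artinian base with Theorem \ref{t.dual} (where the paper simply cites Deligne's [Th.\ finitude, Cor.\ 2.16] you verify the acyclicity directly, which is fine), and the ``only if'' direction reduces to a separably closed field via Proposition \ref{p.pull} before invoking Proposition \ref{p.dualizable}(d), exactly as in the paper. The only addition is that you spell out the descent of $D_{\cft}$ along $\Spec\bar k\to\Spec k$, which the paper leaves implicit.
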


\begin{proof}
For $L\in D_{\cft}(X,\Lambda)$, $L$ is locally acyclic over $S$ by
\cite[Th.\ finitude, Corollaire 2.16]{SGA4d} and thus $(X,L)$ is dualizable
by the theorem. (Alternatively one can apply Lemma \ref{l.Hom1} and
\cite[III (3.1.1)]{SGA5}.) For the converse, we may assume that $S$ is the
spectrum of a separably closed field by Proposition \ref{p.pull}. In this
case, Proposition \ref{p.dualizable} (d) applies.
\end{proof}

\subsection{The relative Lefschetz-Verdier pairing}\label{s.24}
Let $S$ be a coherent scheme and $\Lambda$ a torsion commutative ring.

\begin{notation}\label{n.p}
For objects $(X,L)$ and $(Y,M)$ of $\cC$ with $(X,L)$ dualizable and
morphisms $(c,u)\colon (X,L)\to (Y,M)$ and $(d,v)\colon (Y,M)\to (X,L)$, we
write the pairing $\langle (c,u),(d,v)\rangle\in \Omega \cC$ in Construction
\ref{c.pair} as $(F,\langle u,v\rangle)$, where $F=C\times_{X\times_S Y} D$.
We call $\langle u,v\rangle\in H^0(F,K_{F/S})$ the relative
Lefschetz-Verdier pairing. The pairing is symmetric: $\langle u,v\rangle$
can be identified with $\langle v,u\rangle$ via the canonical isomorphism
$\langle c,d\rangle\simeq \langle d,c\rangle$.

For an endomorphism $(e,w)$ of a dualizable object $(X,L)$ of $\cC$, we
write $\tr(e,w)=(X^e,\tr(w))$, where $X^e=E\times_{e,X\times_S X,\Delta} X$
and $\tr(w)=\langle w,\id_L \rangle\in H^0(X^e,K_{X^e/S})$. We define the
\emph{characteristic class} $\cc_{X/S}(L)$ to be
$\tr(\id_L)=\langle\id_L,\id_L\rangle\in H^0(X,K_{X/S})$. In other words,
$\dim(X,L)=(X,\cc_{X/S}(L))$.
\end{notation}

\begin{theorem}[Relative Lefschetz-Verdier]\label{t.lv}
Let
\begin{equation}\label{e.lv}
\xymatrix{X\ar[d]_{f} & C\ar[l]_{\vecl{c}}\ar[d]^{p}\ar[r]^{\vecr{c}} & Y\ar[d]^{g} & D\ar[l]_{\vecl{d}}\ar[r]^{\vecr{d}}\ar[d]^{q} &
X\ar[d]^{f}\\
X' & C'\ar[l]_{\vecl{c'}}\ar[r]^{\vecr{c'}} & Y' & D'\ar[l]_{\vecl{d'}}\ar[r]^{\vecr{d'}} & X'}
\end{equation}
be a commutative diagram of schemes separated of finite type over $S$, with
$p$ and $D\to D'\times_{Y'} Y$ proper. Let $L\in D(X,\Lambda)$ such that
$(X,L)$ and $(X',f_!L)$ are dualizable objects of $\cC$. Let $M\in
D(Y,\Lambda)$, $u\colon \vecl{c}^*L\to \vecr{c}^! M$, $v\colon
\vecl{d}^*M\to \vecr{d}^! L$. Then $s\colon C\times_{X\times_S Y} D\to
C'\times_{X'\times_S Y'} D'$ is proper and
\[s_*\langle u,v\rangle = \langle p^\sharp_!u,q^\sharp_!v\rangle.\]
\end{theorem}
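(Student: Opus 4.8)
The plan is to deduce Theorem \ref{t.lv} from the functoriality of pairings established in Subsection \ref{ss.2}, by reinterpreting the diagram \eqref{e.lv} together with the cohomological correspondences $u$ and $v$ as a single diagram in the symmetric monoidal $2$-category $\cC$ of the form \eqref{e.twosquare} (or its relaxed variant \eqref{e.fun2}), and then matching the abstractly-produced morphism of traces against the desired equality $s_*\langle u,v\rangle = \langle p^\sharp_!u,q^\sharp_!v\rangle$. The key point is that the hypotheses are tailored exactly to the categorical setup: $(X,L)$ and $(X',f_!L)$ are assumed dualizable, which is precisely the dualizability required of the left-hand objects in Construction \ref{c.fun}, and $(c,u)$, $(d,v)$ are morphisms $(X,L)\to (Y,M)$ and $(Y,M)\to (X,L)$ in $\cC$, so that $\langle(c,u),(d,v)\rangle = (F,\langle u,v\rangle)$ in the notation of Notation \ref{n.p}.

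First I would assemble the diagram \eqref{e.lv} into a commutative square in $\cC$ of the shape \eqref{e.twosquare} with horizontal arrows $(c,u)$ and $(d,v)$ and vertical arrows given by the pushforward functor $f_\natural$, using the $!$-pushforward of Construction \ref{c.push}: by Lemma \ref{l.push} the properness of $q\colon C\to X\times_{X'}C'$ and of $D\to D'\times_{Y'}Y$ yields $2$-morphisms $q$ and the analogue for $v$, producing the cohomological correspondences $(c',p^\sharp_!u)$ and $(d',q^\sharp_!v)$ as the bottom arrows, with $(f,f_!L)$ playing the role of the right adjointable vertical map. I would then observe that the target object $(X',f_!L)$ is dualizable by hypothesis, so Construction \ref{c.fun} (or Construction \ref{c.fun2}, when $f$ is not proper, via the splitting of the down-square) applies and produces a canonical morphism $\langle(c,u),(d,v)\rangle \to \langle(c',p^\sharp_!u),(d',q^\sharp_!v)\rangle$ in $\Omega\cC$. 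Unwinding this morphism through the description of $\Omega\cC$ — where objects are pairs $(Z,\alpha)$ with $\alpha\in H^0(Z,K_{Z/S})$ and morphisms are proper maps inducing $p_*$ on cohomology as in \eqref{e.pp} — the underlying scheme map is exactly $s\colon C\times_{X\times_S Y}D\to C'\times_{X'\times_S Y'}D'$, so its existence and properness follow formally, and the equality of classes reads $s_*\langle u,v\rangle=\langle p^\sharp_!u,q^\sharp_!v\rangle$.

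The main obstacle I expect is the bookkeeping identifying the abstract coevaluation/evaluation composite of Construction \ref{c.fun} with the concrete relative Lefschetz-Verdier pairing, i.e.\ checking that the $2$-morphisms $\bar\epsilon$ \eqref{e.barepsilon} and $\bar\eta$ \eqref{e.bareta} built from the unit and counit of the adjunction $f_\natural\dashv f^\natural$ (Lemma \ref{l.adj}, when $f$ is proper) translate into the correct adjunction morphisms $\adj\colon f_!f^!\to\id$ and $\id\to f^!f_!$ on the level of étale sheaves, and that the resulting class in $\Omega\cC$ is genuinely $s_*\langle u,v\rangle$ rather than some variant. Here the explicit formulas for $\coev$ and $\ev$ from Remark \ref{r.adjoint}(b), together with the description of the evaluation/coevaluation of a dualizable $(X,L)$ via the diagonal, should let me verify that the composite down-square of \eqref{e.twosquare} computes $\tr((d,v)\circ(c,u))\to\tr((d',q^\sharp_!v)\circ(c',p^\sharp_!v))$ correctly, and the compatibility of $!$-pushforward with horizontal and vertical composition noted at the end of Construction \ref{c.push} ensures the general (non-adjointable) case reduces to the pieces handled by Construction \ref{c.fun2}. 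The properness of $s$ itself is a purely scheme-theoretic consequence of the properness hypotheses on $p$ and $D\to D'\times_{Y'}Y$, which I would verify directly by a base-change argument, so that $s_*$ is well-defined before invoking the categorical equality.
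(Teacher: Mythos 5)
Your proposal follows essentially the same route as the paper: build the down-square in $\cC$ via Construction \ref{c.push}, apply the functoriality of pairings (Construction \ref{c.fun}, or Construction \ref{c.fun2} when $f$ is not proper), and read off both the properness of $s$ and the equality of classes from the description of morphisms in $\Omega\cC$. The only step you leave implicit is the explicit source of the splitting required by Construction \ref{c.fun2}: the paper obtains it by decomposing the left half of \eqref{e.lv} vertically into a square moving only $X$ (which yields the diagonal arrow $(e,w)$ with $e=(f\vecl{c},\vecr{c})$ and $w=(f,\id_C,\id_Y)_!u$) followed by a square moving $C$ and $Y$.
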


Combining this with Theorem \ref{t.dual}, we obtain Theorem \ref{t.lv0}.

\begin{proof}
By Construction \ref{c.push} applied to the right half of \eqref{e.lv} and
to the decomposition (which was used in the proof of \cite[Proposition
8.11]{six})
\[
\xymatrix{X\ar[d]_{f} & C\ar[l]_{\vecl{c}}\ar@{=}[d]\ar[r]^{\vecr{c}} & Y\ar@{=}[d]\\
X'\ar@{=}[d] & C\ar[l]_{f\vecl{c}}\ar[d]^{p}\ar[r]^{\vecr{c}} & Y\ar[d]^{g}\\
X' & C'\ar[l]_{\vecl{c'}}\ar[r]^{\vecr{c'}} & Y'}
\]
of the left half of \eqref{e.lv}, we get a diagram in $\cC$
\[\xymatrix{(X,L)\ar[r]^{(c,u)}_\Downarrow\ar[d]_{f_\natural} & (Y,M) \ar[d]^{g_\natural}\ar[r]^{(d,v)} & (X,L)\ar[d]^{f_\natural}\\
(X',f_!L)\ar[r]_{(c',p^\sharp_!u)}^\Downarrow\ar[ru]|{(e,w)} & (Y',g_!M)\ar[r]_{(d',q^\sharp_!v)} & (X',f_!L)\ultwocell\omit{^{}}}
\]
where $e=(f\vecl{c},\vecr{c})$ and $w=(f,\id_C,\id_Y)_!u$. By Construction
\ref{c.fun2}, we then get a morphism $(F,\langle u,v\rangle)\to (F',\langle
p^\sharp_!u,q^\sharp_! v\rangle)$ in $\Omega \cC$ given by $s\colon F\to
F'$.
\end{proof}

In the case where $f$ is proper, the dualizability of $(X',f_*L)$ follows
from that of $(X,L)$ by Proposition \ref{p.push} below. Moreover, in this
case, by Lemma \ref{l.adj}, $f_\natural$ is right adjointable and it
suffices in the above proof to apply the more direct Construction
\ref{c.fun} in place of Construction \ref{c.fun2}.

\begin{cor}
Let $f\colon X\to X'$ be a proper morphism of schemes separated of finite
type over $S$ and let $L\in D(X,\Lambda)$ such that $(X,L)$ is a dualizable
object of $\cC$. Then $f_*\cc_{X/S}(L)=\cc_{X'/S}(f_* L)$.
\end{cor}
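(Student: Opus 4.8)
The plan is to obtain the corollary as the special case of Theorem \ref{t.lv} in which $Y = X$, $M = L$, and the correspondence $(d,v)$ is taken to be the identity endomorphism $(\id_X, \id_L)$ of $(X,L)$, so that the pairing $\langle u,v\rangle$ specializes to a trace and the identity $s_*\langle u,v\rangle = \langle p^\sharp_! u, q^\sharp_! v\rangle$ becomes the desired pushforward compatibility for characteristic classes. First I would apply Theorem \ref{t.lv} to $u = v = \id_L$, i.e.\ to the endomorphism $(e,w) = (\id_{(X,L)})$. By Notation \ref{n.p}, the characteristic class is $\cc_{X/S}(L) = \langle \id_L, \id_L\rangle \in H^0(X, K_{X/S})$, so the left-hand side of the theorem's formula is exactly $s_* \cc_{X/S}(L)$.

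The main point is then to match the right-hand side $\langle p^\sharp_! u, q^\sharp_! v\rangle$ with $\cc_{X'/S}(f_* L)$. Since $f$ is proper, I would set up the diagram \eqref{e.lv} with both correspondences equal to the trivial correspondence $(\id_X, \id_X)$ on $X$ (so $C = D = X$, with $\vecl c = \vecr c = \vecl d = \vecr d = \id_X$), and all the vertical maps equal to $f$ (so $C' = D' = X'$ with the trivial correspondence there, $p = q = f$). The properness hypotheses on $p$ and on $D \to D' \times_{Y'} Y$ hold because $f$ is proper, and $s\colon C\times_{X\times_S X} D \to C'\times_{X'\times_S X'} D'$ becomes identified with $f$ itself (both fiber products collapse to the source and target of $f$ via the diagonals). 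Under this identification $p^\sharp_! u = q^\sharp_! v$ is the image of $\id_L$ under Construction \ref{c.push}, which for the trivial correspondence and proper $f$ is precisely $\id_{f_* L}$; here I would invoke Lemma \ref{l.adj} and the fact that $f_\natural$ is right adjointable to see that the $!$-pushforward of the identity correspondence is the identity correspondence on $(X', f_* L)$.

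The hard part — though it is mild — is the bookkeeping identifying $F = C\times_{X\times_S Y} D$ with $X$ and $F' = C'\times_{X'\times_S Y'} D'$ with $X'$, and checking that the map $s$ agrees with $f$ after these identifications, so that the theorem's equation reads $f_* \cc_{X/S}(L) = \cc_{X'/S}(f_* L)$. This is a direct unwinding of Notation \ref{n.p}, where $X^e = E\times_{e, X\times_S X, \Delta} X$ for the identity endomorphism reduces to $X$, and correspondingly for $X'$. With these identifications in place, the right-hand side $\langle p^\sharp_* u, q^\sharp_* v\rangle = \langle \id_{f_* L}, \id_{f_* L}\rangle$ is by definition $\cc_{X'/S}(f_* L)$, completing the proof. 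Because $f$ is proper, I would use the remark following Theorem \ref{t.lv} to note that $(X', f_* L)$ is automatically dualizable whenever $(X,L)$ is, so no separate dualizability hypothesis is needed beyond that of $(X,L)$.
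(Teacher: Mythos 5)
Your proposal is correct and is essentially identical to the paper's proof, which simply applies Theorem \ref{t.lv} with $c=d=(\id_X,\id_X)$, $c'=d'=(\id_{X'},\id_{X'})$, $u=v=\id_L$, and all vertical maps equal to $f$. The extra bookkeeping you supply (identifying $F\simeq X$ and $F'\simeq X'$ via the diagonal, checking $p^\sharp_!\id_L=\id_{f_*L}$, and invoking Proposition \ref{p.push} for the dualizability of $(X',f_*L)$) is accurate and just makes explicit what the paper leaves implicit.
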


\begin{proof}
This follows from Theorem \ref{t.lv} applied to $c=d=(\id_X,\id_X)$,
$c'=d'=(\id_{X'},\id_{X'})$ and $u=v=\id_L$.
\end{proof}

\begin{prop}\label{p.push}
Let $f\colon X\to Y$ be a proper morphism of schemes separated of finite
type over $S$. Let $(X,L)$ be a dualizable object of $\cC$. Then $(Y,f_*L)$
is dualizable.
\end{prop}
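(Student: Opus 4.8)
The plan is to exhibit $(Y,f_*L)$ as the image of the dualizable object $(X,L)$ under a right adjointable morphism of $\cB_S$ and then to invoke Remark~\ref{r.adjoint}(b). The morphism $f_\natural$ lies above the correspondence $c_f=(\id_X,f)\colon X\to Y$ in $\cB_S$, and since $f$ is proper the functor $F$ sends $c_f$ to $\vecr{c}_!\vecl{c}^*=f_!=f_*$, so that $F(c_f)(L)=f_*L$. It therefore suffices to verify the hypotheses of Remark~\ref{r.adjoint}(b): that $F_{c,c'}$ is an isomorphism, that $c_f$ is right adjointable in $\cB_S$, and that the target object $Y$ is dualizable in $\cB_S$.

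First I would recall that $c_f$ is right adjointable in $\cB_S$, with right adjoint the transposed correspondence $(f,\id_X)\colon Y\to X$. The composite $c_f^!\circ c_f$ has middle $X\times_Y X$ with legs the two projections, so the relative diagonal $\Delta_f\colon X\to X\times_Y X$ gives a unit $2$-morphism $\id_X\to c_f^!\circ c_f$; it is a closed immersion, hence proper, because $X$ and $Y$ are separated over $S$. Dually $c_f\circ c_f^!$ has both legs equal to $f$, so $f$ itself gives a counit $2$-morphism $c_f\circ c_f^!\to\id_Y$, which is proper by hypothesis. This is exactly the adjointability underlying Lemma~\ref{l.adj} and is an instance of Remark~\ref{r.adjoint}(a), the triangle identities reducing to elementary compatibilities of diagonals with fiber products.

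Second I would note that every object of $\cB_S$, in particular $Y$, is dualizable, indeed self-dual, with evaluation and coevaluation the diagonal correspondences $Y\times_S Y\xleftarrow{\Delta_Y}Y\xrightarrow{a_Y}S$ and $S\xleftarrow{a_Y}Y\xrightarrow{\Delta_Y}Y\times_S Y$; this is the standard self-duality of objects in a $2$-category of spans, whose triangle identities reduce to the universal property of fiber products over $S$. Since the K\"unneth transformation $F_{c,c'}$ for $!$-pushforward is an isomorphism, Remark~\ref{r.adjoint}(b) applies and yields that $(Y,F(c_f)(L))=(Y,f_*L)$ is dualizable; by Proposition~\ref{p.dualizable}(a) its dual is necessarily $(Y,D_{Y/S}f_*L)$. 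I expect no genuinely hard step here, since the statement is a formal consequence of Remark~\ref{r.adjoint}(b); the only points requiring care are the triangle identities for the unit and counit of $c_f$ and for the self-duality of $Y$, both routine span computations. Properness of $f$ is used exactly to make the counit a legitimate $2$-morphism of $\cB_S$ and to identify $f_!L$ with $f_*L$.
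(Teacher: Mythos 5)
Your argument is correct and is exactly the paper's primary proof: the paper's own justification begins ``This follows formally from Remark \ref{r.adjoint} (b),'' and your verification of its hypotheses (right adjointability of the correspondence $(\id_X,f)$ via the relative diagonal and $f$ itself, the standard self-duality of every object of $\cB_S$ via the diagonal spans, and the K\"unneth isomorphism $F_{c,c'}$) is the intended content of that remark together with Lemma \ref{l.adj}. The paper additionally records a second, more explicit argument via internal mapping objects, but your route coincides with the one it cites first.
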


\begin{proof}
This follows formally from Remark \ref{r.adjoint} (b). We can also argue
using internal mapping objects as follows. By Proposition \ref{p.dualizable}
and Lemma \ref{l.Hom1}, the canonical morphism
\[\alpha\colon D_{X/S}L\boxtimes_S M\to R\cHom(p_X^*L,p_Z^! M)\]
is an isomorphism for every object $(Z,M)$ of $\cC$ and it suffices to show
that the canonical morphism
\[\beta\colon D_{Y/S} f_* L\boxtimes_S M\to R\cHom(q_Y^*f_*L,q_Z^!M)\]
is an isomorphism. Here $p_X,p_Z,q_Y,q_Z$ are the projections as shown in
the commutative diagram
\[\xymatrix{X\ar[d]_f & X\times_S Z\ar[d]_{f\times_S \id_Z}\ar[l]_{p_X}\ar[rd]^{p_Z}\\
Y & Y\times_S Z\ar[r]^{q_Z}\ar[l]_{q_Y} & Z.}
\]
Via the isomorphisms $D_{Y/S} f_* L\boxtimes_S M\simeq (f\times_S
\id_Z)_*(D_{X/S}L\boxtimes_S M)$ and
\[
R\cHom(q_Y^*f_*L,q_Z^!M)
\simeq R\cHom((f\times_S \id_Z)_*p_X^*L,q_Z^!M)\simeq (f\times_S \id_Z)_*R\cHom(p_X^*L,p_Z^!M),
\]
$\beta$ can be identified with $(f\times_S \id_Z)_*\alpha$.
\end{proof}

\begin{remark}
The relative Lefschetz-Verdier formula and the proof given above hold for
Artin stacks of finite type over an Artin stack $S$, with proper morphisms
replaced by a suitable class of morphisms equipped with canonical
isomorphisms $f_!\simeq f_*$ (such as proper representable morphisms). The
characteristic class lives in $H^0(I_{X/S},K_{I_{X/S}/S})$, where
$I_{X/S}=X\times_{\Delta,X\times_S X,\Delta} X$ is the inertia stack of $X$
over~$S$.
\end{remark}

Theorem \ref{t.lv} does not cover the twisted Lefschetz-Verdier formula in
\cite[A.2.19]{XZ}

\begin{remark}
Scholze remarked that arguments of this paper also apply in the \'etale
cohomology of diamonds and imply the equivalence between dualizability and
universal local acyclicity in this situation. This fact and applications are
discussed in his work with Fargues on the geometrization of the Langlands
correspondence \cite{FS}. In \cite[Section~4]{KW}, Hansen, Kaletha, and
Weinstein adapt our formalism and prove a Lefschetz-Verdier formula for
diamonds and v-stacks .
\end{remark}

\subsection{Base change and duals}\label{s.25}

We conclude this section with a result on the preservation of duals by base
change.

Let $g\colon S\to T$ be a morphism of coherent schemes and let $\Lambda$ be
a torsion commutative ring.

\begin{prop}\label{p.pull}
Let $(Y,M)$ be a dualizable object of $\cC_{T,\Lambda}$. Then $(Y_S,
g_Y^*M)$ is a dualizable object of $\cC_{S,\Lambda}$ and the canonical
morphism $g_Y^*D_{Y/T} M\to D_{Y_S/S} g_Y^* M$ is an isomorphism. Here
$Y_S=Y\times_T S$ and $g_Y\colon Y_S\to Y$ is the projection.
\end{prop}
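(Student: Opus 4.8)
The plan is to realize base change along $g$ as a \emph{symmetric monoidal} functor $\Phi\colon \cC_{T,\Lambda}\to\cC_{S,\Lambda}$ with $\Phi(Y,M)=(Y_S,g_Y^*M)$, and then invoke Remark \ref{r.functor}, according to which symmetric monoidal functors preserve duals. Granting such a $\Phi$, the dualizability of $(Y,M)$ over $T$ immediately yields the dualizability of $(Y_S,g_Y^*M)$ over $S$; and since the dual of $(Y,M)$ is $(Y,D_{Y/T}M)$ by Proposition \ref{p.dualizable}(a), the image $\Phi(Y,D_{Y/T}M)=(Y_S,g_Y^*D_{Y/T}M)$ is a dual of $(Y_S,g_Y^*M)$.

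To construct $\Phi$, I would feed the situation into Construction \ref{c.tri}. Let $H\colon\cB_T\to\cB_S$ be the base change functor $-\times_T S$ on correspondences; it is symmetric monoidal because fiber products base change correctly, $(Y\times_T Y')\times_T S\simeq Y_S\times_S Y'_S$. Let $F_T\colon\cB_T\to\Cat^\co$ and $F_S\colon\cB_S\to\Cat^\co$ be the functors $X\mapsto D(X,\Lambda)$, $(\vecl{c},\vecr{c})\mapsto\vecr{c}_!\vecl{c}^*$ whose Grothendieck constructions are $\cC_{T,\Lambda}$ and $\cC_{S,\Lambda}$. The pullbacks $\alpha_X=g_X^*\colon D(X,\Lambda)\to D(X_S,\Lambda)$ should assemble into a right-lax symmetric monoidal natural transformation $\alpha\colon F_T\to F_S\circ H$: for a correspondence $c$ over $T$ the required cell $\alpha_c$ is the base change isomorphism $g_Y^*\vecr{c}_!\vecl{c}^*\simeq(\vecr{c}_S)_!g_C^*\vecl{c}^*=(\vecr{c}_S)_!(\vecl{c}_S)^*g_X^*$; the comparison $\alpha_{X,X'}$ is the isomorphism $g_X^*L\boxtimes_S g_{X'}^*L'\simeq g_{X\times_T X'}^*(L\boxtimes_T L')$ expressing compatibility of external products with pullback; and $e_\alpha\colon\Lambda_S\to g^*\Lambda_T$ is the canonical identification. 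Construction \ref{c.tri} then produces functors $\cC_{F_T}\to\cC_{F_S\circ H}\to\cC_{F_S}$, and because $e_\alpha$ and $\alpha_{X,X'}$ are isomorphisms and $H$ is symmetric monoidal, the composite $\Phi$ is symmetric monoidal (Construction \ref{c.Groth2}).

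It then remains to identify the dual produced by $\Phi$ with the canonical one. Applying Proposition \ref{p.dualizable}(a) to the dualizable object $(Y_S,g_Y^*M)$ over $S$ shows that its dual is \emph{also} $(Y_S,D_{Y_S/S}g_Y^*M)$. Uniqueness of duals then supplies a canonical isomorphism $g_Y^*D_{Y/T}M\simeq D_{Y_S/S}g_Y^*M$, and I would finish by checking that this isomorphism coincides with the canonical base change morphism in the statement; this is done by transporting the evaluation and coevaluation maps through $\Phi$ and comparing them with the explicit duality data for dualizable objects of $\cC_{S,\Lambda}$ coming from the diagonal $\Delta\colon Y_S\to Y_S\times_S Y_S$.

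The main obstacle I anticipate lies in the second step: verifying that the pullbacks $g_X^*$ genuinely form a right-lax symmetric monoidal natural transformation, i.e. that $\alpha_c$, $\alpha_{X,X'}$, and $e_\alpha$ satisfy all the coherence compatibilities of Construction \ref{c.Groth2}. These reduce to standard compatibilities among the base change isomorphisms for $\vecr{c}_!$, the composition law \eqref{e.comp}, and the K\"unneth isomorphism $F_{c,c'}$; since every comparison map in sight is an isomorphism, no acyclicity or finiteness hypotheses are needed, but the relevant diagrams must be chased. The final matching of the transported duality data with the stated canonical morphism is likewise routine, though it requires unwinding the explicit formulas for $\ev$ and $\coev$.
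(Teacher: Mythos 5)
Your proposal is correct and follows essentially the same route as the paper: the authors also construct the base change symmetric monoidal functor $g^*\colon \cC_{T,\Lambda}\to\cC_{S,\Lambda}$ (noting explicitly that it is the special case of Construction \ref{c.tri} with $H$ given by base change and $\alpha_Y=g_Y^*$) and then conclude by Remark \ref{r.functor} together with the identification of duals from Proposition \ref{p.dualizable}(a).
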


We prove the proposition by constructing a symmetric monoidal functor
$g^*\colon \cC_{T,\Lambda}\to \cC_{S,\Lambda}$ as follows. We take
$g^*(Y,M)=(Y_S,g_Y^*M)$. For $(d,v)\colon (Y,M)\to (Z,N)$, we take
$g^*(d,v)=(d_S,v_S)$, here $d_S$ is the base change of $d$ by $g$ and $v_S$
is the composite
\[\vecl{d}^*_S g_Y^*M\simeq g_D^*\vecl{d}^* M\xrightarrow{g_D^* v} g_D^*\vecr{d}^! M\to \vecr{d}_S^!g_Z^* M,\]
where $D$ is the source of $\vecl{d}$ and $\vecr{d}$, $g_D$ and $g_Z$ are
defined similarly to $g_Y$. For every $2$-morphism $p$ of $\cC_{T,\Lambda}$,
we take $g^*(p)=p\times_T S$. The symmetric monoidal structure on $g^*$ is
obvious. Proposition \ref{p.pull} then follows from the fact that $g^*\colon
\cC_{T,\Lambda}\to \cC_{S,\Lambda}$ preserves duals (Remark
\ref{r.functor}).

The construction above is a special case of Construction \ref{c.tri}
(applied to $H\colon \cB_T\to \cB_S$ given by base change by $g$ and
$\alpha_Y$ given by $g_Y^*$).

\begin{cor}\label{c.dualbc}
Let $g\colon S\to T$ be a morphism of coherent schemes with $T$ Noetherian
and let $\Lambda$ be a Noetherian commutative ring with $m\Lambda=0$ for $m$
invertible on~$T$. Then for any scheme $Y$ separated of finite type over $T$
and any $M\in D_{\cft}(Y,\Lambda)$ locally acyclic over~$T$, the canonical
morphism $g_Y^*D_{Y/T} M\to D_{Y_S/S} g_Y^* M$ is an isomorphism. Here
$Y_S=Y\times_T S$ and $g_Y\colon Y_S\to Y$ is the projection.
\end{cor}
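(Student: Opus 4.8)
The plan is to deduce the corollary immediately from the two main results that precede it, namely Theorem \ref{t.dual} (characterizing dualizability by local acyclicity over a Noetherian base) and Proposition \ref{p.pull} (asserting that pullback preserves duals). The only work is to check that the hypotheses of the corollary match those of these two results when applied to the appropriate base scheme.

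First I would apply Theorem \ref{t.dual} with the base scheme taken to be $T$. The hypotheses of the corollary are tailored for exactly this: $T$ is Noetherian, $\Lambda$ is Noetherian with $m\Lambda=0$ for some $m$ invertible on $T$, and $Y$ is separated of finite type over $T$. Since by assumption $M\in D_{\cft}(Y,\Lambda)$ is locally acyclic over $T$, Theorem \ref{t.dual} yields that $(Y,M)$ is a dualizable object of $\cC_{T,\Lambda}$, with dual $(Y,D_{Y/T}M)$.

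Next I would feed this dualizable object into Proposition \ref{p.pull}. The morphism $g\colon S\to T$ is a morphism of coherent schemes and $\Lambda$ is torsion (being Noetherian with $m\Lambda=0$), so the hypotheses of Proposition \ref{p.pull} hold verbatim; note in particular that $S$ need only be coherent here, since the Noetherian hypotheses were used only over $T$. The proposition then asserts directly that $(Y_S,g_Y^*M)$ is dualizable in $\cC_{S,\Lambda}$ and that the canonical morphism $g_Y^*D_{Y/T}M\to D_{Y_S/S}g_Y^*M$ is an isomorphism, which is precisely the conclusion of the corollary.

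In this argument there is no real obstacle to overcome: all of the substantive content has already been absorbed into the two inputs. Proposition \ref{p.pull} encodes the compatibility of duals through the symmetric monoidal functor $g^*\colon \cC_{T,\Lambda}\to \cC_{S,\Lambda}$ together with Remark \ref{r.functor}, and it produces the comparison morphism in the form stated, so no separate identification of duals is needed. Theorem \ref{t.dual}, which relies on Gabber's theorem, supplies the translation from the geometric hypothesis of local acyclicity into the categorical condition of dualizability that Proposition \ref{p.pull} requires. The one point worth noting explicitly is that the canonical morphism of Proposition \ref{p.pull} is literally the morphism appearing in the corollary, so the two statements line up by construction.
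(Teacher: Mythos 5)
Your argument is correct and is exactly the paper's proof: the authors likewise deduce the corollary by applying Theorem \ref{t.dual} over $T$ to convert local acyclicity into dualizability and then invoking Proposition \ref{p.pull}. Your verification of the hypotheses is accurate and the elaboration adds nothing beyond what the paper's one-line proof already contains.
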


Note that the statement does not involve cohomological correspondences.

\begin{proof}
This follows from Proposition \ref{p.pull} and Theorem \ref{t.dual}.
\end{proof}

\section{Nearby cycles over Henselian valuation rings}\label{s.3}
Let $R$ be a Henselian valuation ring and let $S=\Spec(R)$. We do not assume
that the valuation is discrete. In other words, we do not assume $S$
Noetherian. Let $\eta$ be the generic point and let $s$ be the closed point.
Let $X$ be a scheme of finite type over $S$. Let $X_\eta=X\times_S \eta$,
$X_s=X\times_S s$. We consider the morphisms of topoi
\[X_\eta\xrightarrow{\aPsi_{X}} X \atimes_S \eta \xleftarrow{i_X}
X_s\atimes_S \eta\simeq X_s\ttimes_s \eta,
\]
where $\atimes$ denotes the oriented product of topoi \cite[XI]{ILO} and
$\ttimes$ denotes the fiber product of topoi. Let $\Lambda$ be a commutative
ring such that $m\Lambda=0$ for some $m$ invertible on $S$. We will study
the composite functor
\[\Psi_{X}\colon D(X_\eta,\Lambda)\xrightarrow{\aPsi_{X}} D(X\atimes_S \eta,\Lambda)\xrightarrow{i_X^*} D(X_s\ttimes_s \eta,\Lambda).\]

Let $\bar s$ be an algebraic geometric point above $s$ and let $\bar\eta\to
S_{(\bar s)}$ be an algebraic geometric point above~$\eta$. The restriction
of $\Psi_X L$ to $X_{\bar s}\simeq X_{\bar s}\ttimes_{\bar s} \bar \eta$ can
be identified with $(j_* L)|_{X_{\bar s}}$, where $j\colon X_{\bar \eta}\to
X_{(\bar s)}$, and was studied by Huber \cite[Section 4.2]{Huber}. We do not
need Huber's results in this paper.

In Subsection \ref{s.31}, we study the symmetric monoidal functor given by
$\Psi$ and cohomological correspondences. We deduce that $\Psi$ commutes
with duals (Corollary \ref{c.Gabber}), generalizing a theorem of Gabber. We
also obtain a new proof of the theorems of Deligne and Huber that $\Psi$
preserves constructibility (Corollary \ref{c.fin}). In Subsection
\ref{s.32}, extending results of Vidal, we use the compatibility of
specialization with proper pushforward to deduce a fixed point result.

\subsection{K\"unneth formulas and duals}\label{s.31}

\begin{prop}[K\"unneth formulas]\label{p.PsiKunn}
Let $X$ and $Y$ be schemes of finite type over $S$ and let $L\in
D(X_\eta,\Lambda)$, $M\in D(Y_\eta,\Lambda)$, then the canonical morphisms
\[\aPsi_{X}
L\boxtimes \aPsi_{Y} M\to \aPsi_{X\times_S Y}(L\boxtimes M),\quad
\Psi_X
L\boxtimes \Psi_Y M\to \Psi_{X\times_S Y}(L\boxtimes M),
\]
are isomorphisms.
\end{prop}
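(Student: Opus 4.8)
The statement asserts two Künneth isomorphisms for the nearby cycle functors $\aPsi$ and $\Psi$. My plan is to prove the first isomorphism, for $\aPsi$, directly from the defining geometry of the oriented product of topoi, and then deduce the second, for $\Psi$, by restricting along $i_X$ and using compatibility of the oriented-product construction with fiber products.

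For the $\aPsi$ assertion, the key is the behavior of the oriented product $X \atimes_S \eta$ under products. First I would establish a base-change/Künneth identity relating the oriented product of $X \times_S Y$ with $\eta$ to the fiber product over $\eta$ of the individual oriented products $X \atimes_S \eta$ and $Y \atimes_S \eta$; concretely, I expect a canonical equivalence of topoi
\[
(X\times_S Y)\atimes_S \eta \simeq (X\atimes_S \eta)\ttimes_{\eta} (Y\atimes_S \eta),
\]
under which $\aPsi_{X\times_S Y}$ is identified with the external tensor product of $\aPsi_X$ and $\aPsi_Y$ along this fiber product of topoi. Granting such an identification, the canonical morphism $\aPsi_X L \boxtimes \aPsi_Y M \to \aPsi_{X\times_S Y}(L\boxtimes M)$ becomes the Künneth morphism for a fiber product of topoi, and one reduces its invertibility to the usual smooth/proper-base-change Künneth formula applied after passing to the (algebraic) geometric fibers. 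The standard reduction is to check an isomorphism stalkwise: by \cite[XI]{ILO}, the points of $X\atimes_S \eta$ are given by pairs consisting of a geometric point $x$ of $X$ and an algebraic geometric point $t\to S_{(x)}$ over $\eta$, and at such a point the stalk of $\aPsi_X L$ computes $R\Gamma$ of $L$ on the corresponding Milnor fiber $X_{(x)t}$. I would therefore reduce the claim to the statement that, for the product, the Milnor fiber of $X\times_S Y$ at $(x,y)$ over a common $t$ is the product of the Milnor fibers of $X$ at $x$ and of $Y$ at $y$, which is a limit computation over the filtered system of étale neighborhoods, combined with the Künneth formula for $R\Gamma$ over a separably closed field.

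For the $\Psi$ assertion I would simply apply $i_X^*$ (respectively $i_{X\times_S Y}^*$) to the $\aPsi$ isomorphism. Since $i_X$ is the immersion of $X_s \atimes_S \eta \simeq X_s \ttimes_s \eta$ into $X\atimes_S \eta$, compatibility of the oriented-product identification above with these closed immersions gives $i_{X\times_S Y}^*(\aPsi_X L\boxtimes \aPsi_Y M)\simeq \Psi_X L\boxtimes \Psi_Y M$ and $i_{X\times_S Y}^*\aPsi_{X\times_S Y}(L\boxtimes M)\simeq \Psi_{X\times_S Y}(L\boxtimes M)$; since $i^*$ is exact and preserves external tensor products, the second isomorphism follows from the first.

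The main obstacle I anticipate is the first step: pinning down the identification of $(X\times_S Y)\atimes_S \eta$ with the fiber product $(X\atimes_S \eta)\ttimes_{\eta}(Y\atimes_S \eta)$ of topoi at the right level of generality, and verifying that the \emph{canonical} Künneth morphism in the statement is the one arising from this identification rather than some a priori different comparison map. Because $S$ need not be Noetherian, I cannot invoke Noetherian finiteness results, so the stalk computation must be carried out as a filtered limit over étale neighborhoods, and the interchange of this limit with the external tensor product is where care is needed. Once the geometry of oriented products under fiber products is in hand, the remaining verifications are the standard base-change arguments.
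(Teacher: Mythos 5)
Your reduction of the second isomorphism to the first by applying $i^*$ is fine and agrees with the paper, but your proof of the first isomorphism has two genuine gaps. First, the proposed equivalence $(X\times_S Y)\atimes_S \eta \simeq (X\atimes_S \eta)\ttimes_{\eta} (Y\atimes_S \eta)$ is false: a point of the right-hand side is a pair of points $(x,\,t\to S_{(x)})$ and $(y,\,t\to S_{(y)})$ sharing the same $t$ but with $x$ and $y$ allowed to lie over \emph{different} geometric points of $S$, whereas a point of the left-hand side requires $x$ and $y$ to lie over a common geometric point of $S$. (This identification is not needed to make sense of the canonical morphism, which is formed on $(X\times_S Y)\atimes_S\eta$ via the two projections, so you cannot lean on it as the organizing principle.)

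Second, and more seriously, your stalkwise computation bottoms out at a false geometric statement. At a point $((x,y),t)$ the two sides have stalks $R\Gamma(X_{(x)t},L)\otimes R\Gamma(Y_{(y)t},M)$ and $R\Gamma\bigl((X\times_S Y)_{((x,y))t},L\boxtimes M\bigr)$, and you propose to identify $(X\times_S Y)_{((x,y))t}$ with $X_{(x)t}\times_t Y_{(y)t}$ by ``a limit computation over the filtered system of \'etale neighborhoods.'' But the products $U\times_S V$ of \'etale neighborhoods of $x$ and $y$ are \emph{not} cofinal among the \'etale neighborhoods of $(x,y)$ in $X\times_S Y$, and the strict localization $(X\times_S Y)_{((x,y))}$ is strictly smaller than $X_{(x)}\times_{S_{(s)}}Y_{(y)}$ (already for $X=Y=\mathbb{A}^1$ over a field: an \'etale neighborhood of the origin in $\mathbb{A}^2$ need not receive a map from any $U\times V$). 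The assertion that the two Milnor fibers nevertheless have the same cohomology is precisely the nontrivial content of the K\"unneth formula for nearby cycles; it is not formal, and it is exactly where an input such as $\Psi$-goodness must enter. The paper's proof instead writes $\aPsi_X$ as a limit of the sliced functors $\aPsi_{X,U/S}$ over open subschemes $U\subseteq S$ (using finiteness of cohomological dimension), reduces to $U=S$, and invokes the K\"unneth formula for nearby cycles over a general base \cite[Theorem A.3]{IZ}, whose hypotheses hold by Orgogozo's theorem \cite[Th\'eor\`eme 2.1]{Org}. Some theorem of this kind is unavoidable; a purely local stalk computation of the sort you sketch cannot close the argument.
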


The K\"unneth formula for $\Psi$ over a Henselian \emph{discrete} valuation
ring is a theorem of Gabber (\cite[Th\'eor\`eme 4.7]{Illusie}, \cite[Lemma
5.1.1]{BB}).

\begin{proof}
It suffices to show that the first morphism is an isomorphism. By passing to
the limit and the finiteness of cohomological dimensions, it suffices to
show that $\Psi_{X,U/S}\colon X_U\to X\atimes_S U$ satisfies K\"unneth
formula for each open subscheme $U\subseteq S$. We then reduce to the case
$U=S$, where the K\"unneth formula is \cite[Theorem A.3]{IZ}. The
$\Psi$-goodness is satisfied by Orgogozo's theorem (\cite[Th\'eor\`eme
2.1]{Org}, \cite[Example 4.26 (2)]{LZdual}).
\end{proof}

\begin{construction}
Let $f\colon X\to Y$ be a separated morphism of schemes of finite type over
$S$. Then we have canonical natural transformations
\begin{align}
f_s^*\Psi_{Y}\to \Psi_{X}f_\eta^*,\label{e.bc1}\\
\Psi_{Y}f_{\eta*}\to f_{s*}\Psi_{X},\label{e.bc2}\\
f_{s!}\Psi_{X}\to \Psi_{Y}f_{\eta!},\label{e.bc3}\\
\Psi_{Y}f_\eta^!\to f_s^!\Psi_{Y}.\label{e.bc4}
\end{align}
Here we denoted $f_s\ttimes_s \eta$ by $f_s$. \eqref{e.bc1} is the base
change
\[f_s^*i_Y^*\aPsi_{Y}\simeq i_X^*(f\atimes_S \id_\eta)^*\aPsi_Y \to i_X^*\aPsi_Xf_\eta^*\]
and \eqref{e.bc4} is defined similarly to \cite[(4.9)]{LZdual} as
\[i_X^*\aPsi_Xf_\eta^!\simeq i_X^*(f\atimes_S \id_\eta)^!\aPsi_Y \to f_s^!i_Y^*\aPsi_Y.\]
\eqref{e.bc1} and \eqref{e.bc2} correspond to each other by adjunction. The
same holds for \eqref{e.bc3} and \eqref{e.bc4}. For $f$ proper,
\eqref{e.bc2} and \eqref{e.bc3} are inverse to each other.
\end{construction}

\begin{construction}
We construct symmetric monoidal $2$-categories $\cC_1$ and $\cC_2$ and a
symmetric monoidal functor $\psi\colon \cC_1\to \cC_2$ as follows.

The construction of $\cC_1$ is identical to that of $\cC_{S,\Lambda}$
(Construction \ref{c.C}) except that we replace the derived category
$D(-,\Lambda)$ by $D((-)_\eta,\Lambda)$. Thus an object of $\cC_1$ is a pair
$(X,L)$, where $X$ is a scheme separated of finite type over $S$ and $L\in
D(X_\eta,\Lambda)$. A morphism $(X,L)\to (Y,M)$ is a pair $(c,u)$, where
$c\colon X\to Y$ is a correspondence over~$S$ and $(c_\eta,u)$ is a
cohomological correspondence over $\eta$. A $2$-morphism $(c,u)\to (d,v)$ is
a $2$-morphism $p\colon c\to d$ such that $p_\eta$ is a $2$-morphism
$(c_\eta,u)\to (d_\eta,v)$. We have $(X,L)\boxtimes (Y,M)= (X\times_S
Y,L\boxtimes_\eta M)$.  The monoidal unit is $(S,\Lambda_\eta)$.

The construction of $\cC_2$ is identical to that of $\cC_{s,\Lambda}$ except
that we replace the derived category $D(-,\Lambda)$ by $D((-)\ttimes_s
\eta,\Lambda)$. Thus an object of $\cC_2$ is a pair $(X,L)$, where $X$ is a
scheme separated of finite type over $s$ and $L\in D(X\ttimes_s
\eta,\Lambda)$. The monoidal unit is $(s,\Lambda_\eta)$.

We define $\psi$ by $\psi(X,L)=(X_s,\Psi_{X} L)$, $\psi(c,u)=(c_s,\psi u)$,
where $\psi u$ is specialization of $u$ defined as the composite
\[\vecl{c}_s^* \Psi_{X} L\xrightarrow{\eqref{e.bc1}} \Psi_{C} \vecl{c}_\eta^*L
\xrightarrow{\Psi_{C} (u)} \Psi_{C}\vecr{c}_\eta^! M \xrightarrow{\eqref{e.bc4}} \vecr{c}_s^!\Psi_{Y} M.
\]
For every $2$-morphism $p$, $\psi p=p_s$. The symmetric monoidal structure
is given by the K\"unneth formula (Proposition \ref{p.PsiKunn}) and the
canonical isomorphism $\Psi_{S} \Lambda_S \simeq \Lambda_\eta$.
\end{construction}

\begin{remark}\label{r.psi}
The symmetric monoidal $2$-category $\cC_1$ (resp.\ $\cC_2$) is obtained via
the Grothendieck construction (Construction \ref{c.Groth}) from the
right-lax symmetric monoidal functor $\cB_S\to \Cat^\co$ (resp.\ $\cB_s\to
\Cat^\co$) carrying $X$ to $D(X_\eta,\Lambda)$ (resp.\ $D(X\ttimes_s
\eta,\Lambda)$). The symmetric monoidal functor $\psi$ is a special case of
Construction \ref{c.tri} (with $H\colon \cB_S\to \cB_s$ given by taking
special fiber). More explicitly, if $\cC'_2$ denotes the symmetric monoidal
$2$-category obtained from the right-lax symmetric monoidal functor
$\cB_S\to \Cat^\co$ carrying $X$ to $D(X_s\ttimes_s \eta,\Lambda)$, then
$\psi$ decomposes into $\cC_1\xrightarrow{\psi_1}
\cC'_2\xrightarrow{\psi_2}\cC_2$, where $\psi_1$ carries $(X,L)$ to
$(X,\Psi_{X} L)$ and $\psi_2$ carries $(X,L)$ to $(X_s,L)$.
\end{remark}

The proof the following lemma is identical to that of Lemma \ref{l.Hom}.

\begin{lemma}\label{l.Hom2}
The symmetric monoidal structures $\otimes$ on $\cC_1$ (resp.\ $\cC_2$) is
closed, with mapping object $\cHom((X,L),(Y,M))=(X\times_S
Y,R\cHom(p_{X_\eta}^*L,p_{Y_\eta}^!M))$ (resp.\
$\cHom((X,L),(Y,M))=(X\times_s Y,R\cHom(p_X^*L,p_Y^!M))$).
\end{lemma}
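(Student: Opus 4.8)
The statement asserts that the monoidal structure $\otimes$ on $\cC_1$ (resp.\ $\cC_2$) is closed, with the indicated internal mapping object. The plan is to follow verbatim the argument of Lemma \ref{l.Hom}, since the only change is that the derived category $D(-,\Lambda)$ is replaced by $D((-)_\eta,\Lambda)$ (resp.\ $D((-)\ttimes_s \eta,\Lambda)$), and the six-functor formalism used in the proof of Lemma \ref{l.Hom} --- in particular the tensor-hom adjunction $R\cHom(A\otimes B,C)\simeq R\cHom(A,R\cHom(B,C))$ --- holds equally well in these topoi.

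Concretely, I would exhibit for objects $(X,L)$, $(Y,M)$, $(Z,N)$ of $\cC_1$ an isomorphism of categories
\[F\colon \Hom((X,L)\otimes (Y,M),(Z,N))\simeq \Hom((X,L),\cHom((Y,M),(Z,N))).\]
An object of either side is a pair consisting of a correspondence $c\colon C\to X\times_S Y\times_S Z$ together with a section $u$ lying in $H^0(C_\eta,c_\eta^!(-))$ applied to one of the two sides of the tensor-hom isomorphism
\[R\cHom(p_{X_\eta}^*L\otimes p_{Y_\eta}^*M,\,p_{Z_\eta}^!N)\simeq R\cHom(p_{X_\eta}^*L,\,R\cHom(p_{Y_\eta}^*M,p_{Z_\eta}^!N)),\]
where the projections are taken from $(X\times_S Y\times_S Z)_\eta$. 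One then defines $F$ on objects by transporting $u$ across this isomorphism, and $F(p)=p$ on $2$-morphisms; the fact that $F$ is an isomorphism of categories is immediate once the adjunction isomorphism is in hand. The argument for $\cC_2$ is identical, replacing $(-)_\eta$ by $(-)\ttimes_s \eta$ and $X\times_S Y$ by $X\times_s Y$, and using the tensor-hom adjunction in $D((X\times_s Y)\ttimes_s \eta,\Lambda)$.

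There is essentially no obstacle: the proof is a transcription of Lemma \ref{l.Hom}, and the only point worth checking is that the formation of the correspondence data and the tensor-hom adjunction are compatible with restriction to the generic fiber (resp.\ with the fiber product of topoi $\ttimes_s \eta$). Both hold because the relevant six operations and their adjunctions are available on the topoi $X_\eta$ and $X\ttimes_s \eta$, and because $p_{X_\eta}^*$, $p_{Y_\eta}^*$, $p_{Z_\eta}^!$ are computed after base change along $\eta\to S$. Accordingly, the proof reduces to the sentence already supplied before the statement, namely that the argument is identical to that of Lemma \ref{l.Hom}, and no further calculation is required.
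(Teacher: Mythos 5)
Your proposal is correct and coincides with the paper's own treatment: the paper disposes of this lemma with the single remark that its proof is identical to that of Lemma \ref{l.Hom}, which is exactly the transcription you carry out. Spelling out that the tensor-hom adjunction and the relevant six operations are available on the topoi $X_\eta$ and $X\ttimes_s\eta$ is a reasonable extra check, but nothing beyond the verbatim repetition of Lemma \ref{l.Hom} is needed.
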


\begin{remark}\label{r.psidual}
It follows from Remark \ref{r.dual2} and Lemma \ref{l.Hom2} that the dual of
a dualizable object $(X,L)$ in $\cC_1$ (resp.\ $\cC_2$) is $(X,D_{X_\eta}
L)$ (resp.\ $(X,D_{X\ttimes_s \eta} L)$). Here for $a\colon U\to \eta$ and
$b\colon V\to s$ separated of finite type, we write $K_U=K_{U/\eta}$,
$D_U=D_{U/\eta}$ and $K_{V\ttimes_s \eta}=(b\ttimes_s \eta)^!\Lambda_\eta$,
$D_{V\ttimes_s \eta}=R\cHom(-,K_{V\ttimes_s \eta})$.
\end{remark}

In the rest of Subsection \ref{s.31}, we assume that $\Lambda$ is
Noetherian.

\begin{prop}\label{p.psidual}
An object $(X,L)$ in $\cC_1$ or $\cC_2$ is dualizable if and only if $L\in
D_\cft$.
\end{prop}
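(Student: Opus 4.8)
The plan is to reduce both statements to the field case, Corollary \ref{c.field}, exploiting that the cohomological fibers of $\cC_1$ and $\cC_2$ live over the generic point $\eta$, which is the spectrum of a field (and over which local acyclicity is vacuous). The first step is to record, for $\cC_1$ and $\cC_2$, the internal-Hom criterion for dualizability. Combining Lemma \ref{l.Hom0} with the computation of internal mapping objects in Lemma \ref{l.Hom2} and the identification of the prospective dual in Remark \ref{r.psidual}, I obtain the exact analog of Lemma \ref{l.Hom1}: $(X,L)$ is dualizable in $\cC_1$ (resp.\ in $\cC_2$) if and only if the canonical biduality morphism $L\boxtimes_\eta D_{X_\eta}L\to R\cHom(p_2^*L,p_1^!L)$ over $X_\eta\times_\eta X_\eta$ (resp.\ the analogous morphism $L\boxtimes D_{X\ttimes_s\eta}L\to R\cHom(p_2^*L,p_1^!L)$ over $(X\times_s X)\ttimes_s\eta$) is an isomorphism.

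For the ``if'' direction I would assume $L\in D_\cft$ and verify this biduality isomorphism. For $\cC_1$ the base is the field $\eta$, and the morphism is an isomorphism by the K\"unneth--duality formula \cite[III (3.1.1)]{SGA5}, exactly as in the alternative proof of Corollary \ref{c.field}. For $\cC_2$, since isomorphisms in $D(-\ttimes_s\eta,\Lambda)$ may be checked after restriction to the geometric Milnor fibers $X_{\bar s}\simeq X_{\bar s}\ttimes_{\bar s}\bar\eta$, and this restriction is compatible with $\boxtimes$, $R\cHom$, and the duality functor (the topos-theoretic analog of the base-change statements of Subsections \ref{s.Kunn} and \ref{s.25}, using Proposition \ref{p.PsiKunn}), I would reduce the biduality isomorphism over $(X\times_s X)\ttimes_s\eta$ to the same formula over the separably closed field $\overline{k(s)}$, where it again follows from \cite[III (3.1.1)]{SGA5}.

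For the ``only if'' direction I would produce dual-preserving symmetric monoidal functors to cohomological correspondences over a field, so that Corollary \ref{c.field} applies. For $\cC_1$, base change along $\eta\to S$ is a symmetric monoidal functor $H\colon\cB_S\to\cB_\eta$, and the construction preceding Construction \ref{c.tri} yields a symmetric monoidal functor $\cC_1\to\cC_{\eta,\Lambda}$ sending $(X,L)$ to $(X_\eta,L)$ (compatibly with Remark \ref{r.psi}); by Remark \ref{r.functor} it preserves duals, so a dualizable $(X,L)$ maps to a dualizable object over the field $\eta$, whence $L\in D_\cft(X_\eta,\Lambda)$ by Corollary \ref{c.field}. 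For $\cC_2$ I would instead restrict to the geometric Milnor fiber: pullback along $X_{\bar s}\to X_s\ttimes_s\eta$ should assemble into a dual-preserving symmetric monoidal functor $\cC_2\to\cC_{\bar s,\Lambda}$ (a topos-theoretic analog of the base-change functor of Proposition \ref{p.pull}), so that the restriction of $L$ to $X_{\bar s}$ is $c$-perfect by Corollary \ref{c.field}, and one then deduces that $L$ itself is $c$-perfect on $X_s\ttimes_s\eta$.

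The main obstacle is the $\cC_2$ case, where the base of the cohomological fibers is the fiber product of topoi $X\ttimes_s\eta$ rather than a scheme over a field. The conceptual point is that everything is fibered over the field $\eta$, so the field-case arguments should apply; the work lies in establishing the required compatibilities of the six operations in this topos-theoretic setting --- in particular the K\"unneth--duality isomorphism over $X\ttimes_s\eta$ and the commutation of $D_{X\ttimes_s\eta}$ and $R\cHom$ with restriction to the geometric Milnor fibers $X_{\bar s}$ --- together with the verification that $c$-perfectness over $X\ttimes_s\eta$ is detected on the $X_{\bar s}$. I expect these to follow from Proposition \ref{p.PsiKunn} and the reduction to geometric points, paralleling the scheme-theoretic results already proved, but they are the genuinely new input beyond the field case.
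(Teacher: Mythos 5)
Your proposal is correct and follows essentially the same route as the paper: both reduce dualizability in $\cC_1$ and $\cC_2$ to dualizability over a field via Lemma \ref{l.Hom0} and the internal mapping objects of Lemma \ref{l.Hom2}, and then invoke Corollary \ref{c.field}. The only point you defer --- that dualizability in $\cC_2$, i.e.\ the relevant internal-Hom condition over $X\ttimes_s\eta$, is detected after restriction to the geometric special fiber $X_{\bar s}$, compatibly with the six operations --- is exactly what the paper supplies by citing \cite[Lemma 1.29]{LZdual}, so your flagged ``genuinely new input'' is a known result rather than a gap.
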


\begin{proof}
By Lemma \ref{l.Hom0} and the identification of internal mapping objects
(Lemmas \ref{l.Hom} and \ref{l.Hom2}), $(X,L)$ in $\cC_1$ is dualizable if
and only if $(X_\eta,L)$ in $\cC_\eta$ is dualizable. The latter condition
is equivalent to $L\in D_\cft$ by Corollary \ref{c.field}.

Similarly, $(X,L)$ in $\cC_2$ is dualizable if and only if $(X_{\bar
s},L|_{X_{\bar s}})$ in $\cC_{\bar s}$ is dualizable, by \cite[Lemma
1.29]{LZdual}. The latter condition is equivalent to $L|_{X_{\bar s}}\in
D_\cft$, which is in turn equivalent to $L\in D_\cft$.
\end{proof}

\begin{cor}\label{c.Gabber}
Let $X$ be a scheme separated of finite type over $S$ and let $L\in
D^-_c(X_\eta,\Lambda)$. The canonical morphism $\Psi_X D_{X_\eta} L \to
D_{X_s\ttimes_s \eta}\Psi_X L$ is an isomorphism in $D(X_s\ttimes_s
\eta,\Lambda)$.
\end{cor}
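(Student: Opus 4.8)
The plan is to deduce the corollary from the fact that the symmetric monoidal functor $\psi\colon \cC_1\to \cC_2$ preserves duals (Remark \ref{r.functor}), first in the dualizable case $L\in D_\cft$ and then in general by dévissage.

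First I would treat the case $L\in D_\cft(X_\eta,\Lambda)$. Then $(X,L)$ is dualizable in $\cC_1$ by Proposition \ref{p.psidual}, and its dual is $(X,D_{X_\eta}L)$ by Remark \ref{r.psidual}. Since $\psi$ is symmetric monoidal, it carries this dual to a dual of $\psi(X,L)=(X_s,\Psi_X L)$. As $\psi(X,D_{X_\eta}L)=(X_s,\Psi_X D_{X_\eta}L)$, while the dual of $(X_s,\Psi_X L)$ in $\cC_2$ is $(X_s,D_{X_s\ttimes_s\eta}\Psi_X L)$ again by Remark \ref{r.psidual}, this produces an isomorphism $\Psi_X D_{X_\eta}L\simeq D_{X_s\ttimes_s\eta}\Psi_X L$. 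The one point requiring care here is to identify this abstractly produced isomorphism with the canonical morphism of the statement; I would verify this by unwinding the definition of the specialization maps $\psi u$ (built from \eqref{e.bc1} and \eqref{e.bc4}) against the explicit evaluation and coevaluation maps, that is, by a naturality argument tracing the duality data through $\psi$.

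For general $L\in D^-_c$, the object $(X,L)$ need not be dualizable, so I would reduce to the previous case by dévissage. Following the reduction used in the proof of Proposition \ref{p.D} via \cite[IX Proposition 2.7]{SGA4}, I would resolve $L$ by a bounded-above complex of objects of the form $j_!\Lambda_U$ with $j\colon U\to X_\eta$ \'etale and $U$ affine; each such object lies in $D_\cft$. Writing $L$ as the homotopy limit of the brutal truncations of this resolution, which lie in $D_\cft$ and stabilize in each cohomological degree, reduces the claim to the dualizable case already handled, provided both sides of the canonical morphism commute with this limit.

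The main obstacle is precisely this commutation, and it is where the running hypotheses enter. Over the field $\eta$ the functor $D_{X_\eta}$ has finite cohomological dimension on constructible complexes, so it converts the homotopy limit of truncations into a filtered homotopy colimit of their duals; and $\Psi_X$ likewise has finite cohomological dimension, by the constructibility and $\Psi$-goodness input (Orgogozo's theorem) already invoked in the proof of Proposition \ref{p.PsiKunn}, so it commutes with both the limit and the colimit. Granting these finiteness statements, the canonical morphism for $L$ is the colimit of the canonical morphisms attached to the truncated, dualizable pieces, each of which is an isomorphism by the first step; hence it is an isomorphism. I would take care to arrange the resolution and its truncations compatibly with these cohomological-dimension bounds, so that the stabilization in each degree leaves no $\lim^1$ obstruction.
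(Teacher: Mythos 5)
Your proposal is correct and follows essentially the same route as the paper: both arguments reduce $L\in D^-_c$ to the case $L=u_!\Lambda_U\in D_{\cft}$ by d\'evissage using the finite cohomological dimension of $\Psi_X$, and then conclude from the dualizability of $(X,L)$ in $\cC_1$ (Proposition \ref{p.psidual}), the preservation of duals by the symmetric monoidal functor $\psi$ (Remark \ref{r.functor}), and the identification of duals in $\cC_1$ and $\cC_2$ (Remark \ref{r.psidual}). The only difference is the order of the two steps (the paper d\'evisses first), plus your extra care about matching the abstract duality isomorphism with the canonical morphism, which the paper leaves implicit.
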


This generalizes a theorem of Gabber for Henselian \emph{discrete} valuation
rings \cite[Th\'eor\`eme 4.2]{Illusie}. Our proof here is different from
that of Gabber. One can also deduce Corollary \ref{c.Gabber} from the
commutation of duality with sliced nearby cycles over general bases
\cite[Theorem 0.1]{LZdual}.

\begin{proof}
The cohomological dimension of $\Psi_X$ is bounded by $\dim(X_\eta)$. Thus
we may assume that $L$ is of the form $u_!\Lambda_U$, where $u\colon U\to
X_\eta$ is an \'etale morphism of finite type. In particular, we may assume
$L\in D_{\cft}(X_\eta,\Lambda)$. In this case, $(X,L)$ is dualizable by
Proposition \ref{p.psidual}. We conclude by the fact that $\psi$ preserve
duals (Remark \ref{r.functor}) and the identification of duals (Remark
\ref{r.psidual}).
\end{proof}

We also deduce a new proof of the following finiteness theorem of Deligne
(for Henselian \emph{discrete} valuation rings) \cite[Th.\ finitude,
Th\'eor\`eme 3.2]{SGA4d} and Huber \cite[Proposition 4.2.5]{Huber}. Our
proof relies on Deligne's theorem on local acyclicity over a field
\cite[Th.\ finitude, Corollaire 2.16]{SGA4d}.

\begin{cor}\label{c.fin}
Let $X$ be a scheme of finite type over $S$. Then $\Psi_X$ preserves $D^b_c$
and $D_\cft$.
\end{cor}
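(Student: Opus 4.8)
The plan is to prove the two preservation statements separately, obtaining the $D_\cft$ case as a formal consequence of the dualizability dictionary already established and then bootstrapping from $D_\cft$ to $D^b_c$ by a dévissage to simple generators.

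For $D_\cft$, let $L\in D_\cft(X_\eta,\Lambda)$. By Proposition \ref{p.psidual} the object $(X,L)$ is dualizable in $\cC_1$. Since $\psi\colon \cC_1\to \cC_2$ is symmetric monoidal, it preserves dualizable objects (Remark \ref{r.functor}), so $\psi(X,L)=(X_s,\Psi_X L)$ is dualizable in $\cC_2$. Applying Proposition \ref{p.psidual} again, now in $\cC_2$, gives $\Psi_X L\in D_\cft$, with no further computation needed.

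For $D^b_c$, I would separate boundedness from constructibility. Boundedness follows from the fact that $\Psi_X$ has cohomological dimension bounded by $d\colonequals\dim(X_\eta)$ (recorded in the proof of Corollary \ref{c.Gabber}) together with the left t-exactness of $\aPsi_X$ and the exactness of $i_X^*$; thus $\Psi_X$ carries $D^{[a,b]}$ into $D^{[a,b+d]}$ and preserves boundedness. For constructibility, take $L\in D^b_c(X_\eta,\Lambda)$ and, by \cite[IX Proposition 2.7]{SGA4}, choose a bounded-above resolution $P^\bullet\xrightarrow{\sim} L$ whose terms are finite direct sums of sheaves $u_!\Lambda_U$ with $u\colon U\to X_\eta$ étale of finite type. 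Each such generator is flat (its stalks are free $\Lambda$-modules on finite fibers) and constructible, hence lies in $D_\cft$; by the $D_\cft$ case already proved, $\Psi_X P^i\in D_\cft\subseteq D^b_c$.

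The remaining, and main, step is to pass from the generators back to $L$ using only the finite cohomological dimension, since $D^b_c$ is strictly larger than $D_\cft$ and a general bounded constructible complex is \emph{not} a finite complex of generators. For each $n$, I would compare $\Psi_X P^\bullet\simeq \Psi_X L$ with its stupid truncations via the triangle $\sigma_{\geq m}P^\bullet\to P^\bullet\to \sigma_{\leq m-1}P^\bullet$. Since $\sigma_{\leq m-1}P^\bullet\in D^{\leq m-1}$, we have $\Psi_X(\sigma_{\leq m-1}P^\bullet)\in D^{\leq m-1+d}$, so for $m\leq n-d-1$ the long exact sequence yields $H^n(\Psi_X L)\simeq H^n(\Psi_X\sigma_{\geq m}P^\bullet)$; the right-hand side is the cohomology of a \emph{finite} complex of objects of $D^b_c$, hence constructible. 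As this holds for every $n$ and $\Psi_X L$ is bounded, we conclude $\Psi_X L\in D^b_c$. The subtlety is precisely that one cannot apply $\Psi_X$ termwise to a finite resolution; the finite cohomological dimension is exactly what makes the stupid-truncation comparison close, and the bookkeeping of the degree shifts is where care is required.
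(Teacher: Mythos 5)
Your proof is correct and follows essentially the same route as the paper: the $D_\cft$ case via Proposition \ref{p.psidual} and the fact that the symmetric monoidal functor $\psi$ preserves dualizable objects, and the reduction of $D^b_c$ to $D_\cft$ by d\'evissage to the generators $u_!\Lambda_U$ using the finite cohomological dimension of $\Psi_X$ (your stupid-truncation bookkeeping just spells out the reduction the paper compresses into one line in the proof of Corollary \ref{c.Gabber}). The only step you omit is the initial reduction to $X$ separated, needed because $\cC_1$, $\cC_2$, and Proposition \ref{p.psidual} are only defined for separated schemes; this is a routine Zariski localization that the paper dispatches with ``we may assume that $X$ is separated.''
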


\begin{proof}
We may assume that $X$ is separated. As in the proof of Corollary
\ref{c.Gabber}, we are reduced to the case of $D_\cft$. This case follows
from Proposition \ref{p.psidual} and the fact that $\psi$ preserves
dualizable objects (Remark \ref{r.functor}).
\end{proof}

By Remark \ref{r.functor}, $\psi$ also preserves pairings, and we obtain the
following generalization of \cite[Proposition 1.3.5]{Var}.

\begin{cor}
Consider morphisms of schemes separated of finite type over $S$:
\[\xymatrix{X& C\ar[l]_{\vecl{c}}\ar[r]^{\vecr{c}} & Y & D\ar[l]_{\vecl{d}}\ar[r]^{\vecr{d}}& X.}\]
Let $L\in D_\cft(X_\eta,\Lambda)$, $M\in D(Y_\eta,\Lambda)$, $u\colon
\vecl{c}_\eta^*L\to \vecr{c}_\eta^!M$, $v\colon \vecl{d}_\eta^* M\to
\vecr{d}_\eta^! L$. Then $\spe\langle u,v\rangle=\langle \psi u,\psi
v\rangle$, where $\spe$ is the composition
\[H^0(F_\eta,K_{F_\eta})\to
H^0(F_s\ttimes_s \eta,\Psi_F K_{F_\eta})\to H^0(F_s\ttimes_s \eta,K_{F_s\ttimes_s \eta})
\]
and $F=C\times_{X\times Y}D$.
\end{cor}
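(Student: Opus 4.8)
The plan is to realize the statement as a direct consequence of the functoriality of pairings under the symmetric monoidal functor $\psi\colon \cC_1\to \cC_2$, exactly as the relative Lefschetz-Verdier formula (Theorem \ref{t.lv}) was deduced from functoriality in $\cC=\cC_{S,\Lambda}$. The object $(X,L)$ with $L\in D_\cft(X_\eta,\Lambda)$ is dualizable in $\cC_1$ by Proposition \ref{p.psidual}, so the pairing $\langle u,v\rangle$ is defined in $\Omega\cC_1$ and equals $(F,\langle u,v\rangle)$ in the notation of Notation \ref{n.p} (applied to $\cC_1$), where $F=C\times_{X\times_S Y} D$ and $\langle u,v\rangle\in H^0(F_\eta,K_{F_\eta})$. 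Since $\psi$ is symmetric monoidal, Remark \ref{r.functor} guarantees that it preserves duals and pairings; thus $\psi$ carries the pairing $\langle u,v\rangle$ of $(c,u)$ and $(d,v)$ in $\cC_1$ to the pairing $\langle \psi u,\psi v\rangle$ of $\psi(c,u)$ and $\psi(d,v)$ in $\cC_2$.

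The main step is then to unwind what $\psi$ does on $\Omega$-categories. First I would identify $\Omega\cC_1$ and $\Omega\cC_2$: objects of $\Omega\cC_1$ are pairs $(G,\alpha)$ with $G$ separated of finite type over $S$ and $\alpha\in H^0(G_\eta,K_{G_\eta})$, while objects of $\Omega\cC_2$ are pairs $(G',\alpha')$ with $G'$ separated of finite type over $s$ and $\alpha'\in H^0(G'\ttimes_s \eta,K_{G'\ttimes_s \eta})$. The induced functor $\Omega\psi\colon \Omega\cC_1\to \Omega\cC_2$ sends $(G,\alpha)$ to $(G_s,\spe(\alpha))$, where $\spe$ is precisely the composite $H^0(G_\eta,K_{G_\eta})\to H^0(G_s\ttimes_s \eta,\Psi_G K_{G_\eta})\to H^0(G_s\ttimes_s \eta,K_{G_s\ttimes_s \eta})$ appearing in the statement: the first map is the specialization map $\aPsi_G$ followed by $i_G^*$, and the second is induced by the specialization morphism $\Psi_G K_{G_\eta}=\Psi_G a_{G_\eta}^!\Lambda_\eta\to a_{G_s\ttimes_s \eta}^!\Lambda_\eta=K_{G_s\ttimes_s \eta}$ coming from \eqref{e.bc4} applied to the structural morphism, i.e.\ the $\Omega$-component of the specialization of $\id$. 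Applying this to $G=F$ yields $\Omega\psi(F,\langle u,v\rangle)=(F_s,\spe\langle u,v\rangle)$.

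Combining the two ingredients, $\psi$ preserves the pairing, so $\Omega\psi(F,\langle u,v\rangle)=(F_s,\langle \psi u,\psi v\rangle)$ in $\Omega\cC_2$; comparing the second components gives $\spe\langle u,v\rangle=\langle \psi u,\psi v\rangle$ in $H^0(F_s\ttimes_s \eta,K_{F_s\ttimes_s \eta})$, as desired. One should note that $\langle \psi u,\psi v\rangle$ lives on the fiber product computed in $\cC_2$, which is $F_s=(C\times_{X\times_S Y} D)_s\simeq C_s\times_{X_s\times_s Y_s}D_s$; this identification is automatic since $F_s\ttimes_s \eta$ is the $\Omega\cC_2$-support of the pairing of $\psi(c,u)$ and $\psi(d,v)$, whose correspondences are the special fibers $c_s$ and $d_s$.

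The step I expect to be the main obstacle is verifying that the $\Omega$-component of $\psi$ agrees with the explicitly-written map $\spe$, rather than merely with some abstractly-defined specialization. Concretely, one must check that the second component of $\psi(c,u)$, namely the specialization $\psi u$ built from \eqref{e.bc1} and \eqref{e.bc4}, induces on traces exactly the composite of $\aPsi_F$, $i_F^*$, and the map induced by the specialization of the dualizing complex. This amounts to tracing the coevaluation-evaluation composite defining $\tr$ through the right-lax symmetric monoidal structure of the functor computing $\Psi$, and confirming that the specialization morphism $\Psi_F K_{F_\eta}\to K_{F_s\ttimes_s \eta}$ used in $\spe$ is the same as the one entering the symmetric monoidal structure on $\psi$ (the isomorphism $\Psi_S\Lambda_S\simeq \Lambda_\eta$ together with the K\"unneth formula of Proposition \ref{p.PsiKunn}). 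Once this compatibility is pinned down, the result is formal.
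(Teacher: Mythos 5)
Your proposal is correct and follows exactly the paper's (very terse) argument: the paper justifies this corollary in one sentence by invoking Remark \ref{r.functor} (preservation of pairings by the symmetric monoidal functor $\psi$) together with the dualizability of $(X,L)$ from Proposition \ref{p.psidual}. Your additional unwinding of the induced functor on $\Omega$-categories and the identification of its effect with the map $\spe$ is a correct elaboration of what the paper leaves implicit.
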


\subsection{Pushforward and fixed points}\label{s.32}

\begin{construction}[$!$-Pushforward in $\cC_2$]
Consider a commutative diagram \eqref{e.diag} in  $\cB_s$ such that $q\colon
C\to X\times_{X'} C'$ is proper. Let $(c,u)\colon (X,L)\to (Y,M)$ be a
morphism in $\cC_2$ above $c$. By Lemma \ref{l.push}, we have a unique
morphism $(c',p^\sharp_{!}u)\colon (X',f_{!}L')\to (Y',g_{!}M')$ in $\cC_2$
above $c'$ such that $q$ defines a $2$-morphism in $\cC_2$:
\[\xymatrix{(X,L)\ar[r]^{(c,u)}\ar[d]_{f_\natural} & (Y,M) \ar[d]^{g_\natural} \\
(X',f_!L)\ar[r]^{(c',p^\sharp_!u)} & (Y',g_!M).\ultwocell\omit{^{q}}}
\]
For $f$, $g$, $p$ proper, we write $p^\sharp_*u$ for $p^\sharp_!u$.
\end{construction}

Applying Lemma \ref{l.push2} to the functor $\psi_1$ in Remark \ref{r.psi},
we obtain the following.

\begin{prop}\label{p.sp}
Consider a commutative diagram of schemes separated of finite type over $S$
\[
\xymatrix{X\ar[d]_{f} & C\ar[l]_{\vecl{c}}\ar[d]^{p}\ar[r]^{\vecr{c}} & Y\ar[d]^{g} \\
X' & C'\ar[l]_{\vecl{c'}}\ar[r]^{\vecr{c'}} & Y' }
\]
such that $C\to X\times_{X'} C'$ is proper. Let $L\in D(X_\eta,\Lambda)$,
$M\in D(Y_\eta,\Lambda)$, $u\colon \vecl{c}_\eta^* L\to \vecr{c}_\eta^! M$.
Then the square
\[\xymatrix{\vecl{c'}_s^*f_{s!}\Psi_X L\ar[r]^{p^\sharp_{s!} \psi u}\ar[d] & \vecr{c'}_s^!g_{s!}\Psi_Y M\ar[d]\\
\vecl{c'}_s^*\Psi_{X'} f_{\eta!}L\ar[r]^{\psi p^\sharp_{\eta!}u} & \vecr{c'}_s^!\Psi_{Y'} g_{\eta!}M}
\]
commutes. Here the vertical arrows are given by \eqref{e.bc3}. In
particular, in the case where $f$, $g$, $p$ are proper, $p^\sharp_{s*} \psi
u$ can be identified with $\psi p^\sharp_{\eta*}u$ via the isomorphisms
$f_{s*}\Psi_X\simeq \Psi_{X'} f_{\eta*}$ and $g_{s*}\Psi_{Y}\simeq \Psi_{Y'}
g_{\eta*}$.
\end{prop}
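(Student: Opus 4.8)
The plan is to deduce the square from the abstract commutativity of Lemma \ref{l.push2}, applied to the functor $\psi_1\colon \cC_1\to \cC'_2$ of Remark \ref{r.psi}. Recall that $\psi_1$ is produced by Construction \ref{c.Groth2} from the right-lax symmetric monoidal transformation $\alpha$ between the functors $F,G\colon \cB_S\to \Cat^\co$ underlying $\cC_1$ and $\cC'_2$, with $F(X)=D(X_\eta,\Lambda)$, $G(X)=D(X_s\ttimes_s\eta,\Lambda)$, and component $\alpha_X=\Psi_X$ at each object $X$. Under this dictionary the vertical map $f_\natural=(\id_X,f)$ satisfies $F(f_\natural)=f_{\eta!}$ and $G(f_\natural)=f_{s!}$, so its structural transformation $\alpha_{f_\natural}\colon f_{s!}\Psi_X\to \Psi_{X'}f_{\eta!}$ is precisely \eqref{e.bc3}, and similarly for $g$.

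First I would match the two horizontal arrows with the $!$-pushforwards of Construction \ref{c.push}. The commutative square of the statement together with the properness of $C\to X\times_{X'}C'$ is exactly the datum of a $2$-morphism of type \eqref{e.square} in $\cB_S$, to which Lemma \ref{l.push} applies. The morphism associated to $(c,u)$ through the functor $F$ is then by definition the $\cC_1$-pushforward $p^\sharp_{\eta!}u$, while the morphism associated to $(c,\psi u)$ through $G$ is the $\cC'_2$-pushforward $p^\sharp_{s!}(\psi u)$.

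With these identifications in place, Lemma \ref{l.push2} gives the commutativity of the corresponding square, whose vertical arrows are $\alpha_{f_\natural}$ and $\alpha_{g_\natural}$. To reach the form in the statement, I would rewrite that square through the adjunction $\vecr{c'}_{s!}\dashv \vecr{c'}_s^!$ and the unit of $\vecl{c'}_s^*$, which turns the functorial expressions $G(c')(-)=\vecr{c'}_{s!}\vecl{c'}_s^*(-)$ into correspondence data $\vecl{c'}_s^*(-)\to \vecr{c'}_s^!(-)$; the vertical arrows then become \eqref{e.bc3} pulled back along $\vecl{c'}_s^*$ and $\vecr{c'}_s^!$. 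Finally, post-composing with $\psi_2$ merely relabels the base $X$ by its special fibre $X_s$ without altering any map of sheaves, so the square obtained is literally the one asserted.

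For the last assertion, when $f$, $g$, and $p$ are proper the transformation \eqref{e.bc3} is an isomorphism, being inverse to \eqref{e.bc2}; the vertical arrows become the isomorphisms $f_{s*}\Psi_X\simeq \Psi_{X'}f_{\eta*}$ and $g_{s*}\Psi_Y\simeq \Psi_{Y'}g_{\eta*}$, and commutativity identifies $p^\sharp_{s*}\psi u$ with $\psi p^\sharp_{\eta*}u$. I expect the only real work to lie in this translation between the abstract functorial statement of Lemma \ref{l.push2} and the concrete base-change maps \eqref{e.bc1} and \eqref{e.bc3} defining $\alpha$ and $\psi u$; all the genuine content is already contained in Lemma \ref{l.push2}, which is proved at the level of the Grothendieck construction, so no new geometric input is required.
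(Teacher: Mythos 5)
Your proposal is correct and follows exactly the paper's route: the paper's entire proof is the single sentence ``Applying Lemma \ref{l.push2} to the functor $\psi_1$ in Remark \ref{r.psi}, we obtain the following,'' and your dictionary ($\alpha_X=\Psi_X$, $\alpha$ of the correspondence $(\id_X,f)$ being \eqref{e.bc3}, the adjunction translating $\vecr{c'}_{s!}\vecl{c'}_s^*(-)\to(-)$ into $\vecl{c'}_s^*(-)\to\vecr{c'}_s^!(-)$) is precisely the unwinding the authors leave implicit. The treatment of the proper case via the invertibility of \eqref{e.bc3} also matches the paper.
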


This generalizes a result of Vidal \cite[Th\'eor\`eme 7.5.1]{Vidal1} for
certain Henselian valuation rings of rank~$1$. As in \cite[Sections 7.5,
7.6]{Vidal1}, Proposition \ref{p.sp} implies the following fixed point
result, generalizing \cite[Proposition 5.1, Corollaire 7.5.3]{Vidal1}.

\begin{cor}\label{c.Vidal}
Assume that $\eta$ is separably closed. Consider a commutative diagram of
schemes
\[\xymatrix{X\ar[r]^f\ar[d]^g & S\ar[d]_\sigma^{\simeq}\\
X\ar[r]^f & S}
\]
with $f$ proper and $\sigma$ fixing $s$. Assume that $g_s$ does not fix any
point of $X_s$. Then $\tr(g,R\Gamma(X_\eta,\Lambda))=0$. If, moreover, $g$
is an isomorphism and $U\subseteq X_\eta$ is an open subscheme such that
$g(U)=U$, then $\tr(g,R\Gamma_c(U,\Lambda))=0$.
\end{cor}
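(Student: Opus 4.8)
The plan is to present $\tr(g,R\Gamma(X_\eta,\Lambda))$ as the pushforward of a Lefschetz--Verdier local term living on a fixed-point scheme, and then to kill that term by specializing to the special fibre. Following Vidal, I would package the pair $(g,\sigma)$ as an endomorphism $(e,w)$ of the object $(X,\Lambda_{X_\eta})$ of $\cC_1$, whose underlying correspondence is the graph of $g$; the automorphism $\sigma$ is absorbed into the identification of source and target, using that $\sigma$ fixes both $\eta$ and $s$ and acts trivially on the coefficient groups. The object $(X,\Lambda_{X_\eta})$ is dualizable by Proposition \ref{p.psidual}, since $\Lambda_{X_\eta}\in D_\cft$, so the trace $\tr(e,w)$ is defined. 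By Notation \ref{n.p} this trace is a class $\tr(w)$ supported on the fixed-point scheme $X^e$, and pushing it forward along the proper morphism $f$ recovers $\tr(g,R\Gamma(X_\eta,\Lambda))\in H^0(\eta,\Lambda)$ by the functoriality of traces under proper pushforward, i.e.\ the relative Lefschetz--Verdier formula (Theorem \ref{t.lv}) applied to the map to the base.

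Next I would specialize using the functor $\psi\colon\cC_1\to\cC_2$. By Remark \ref{r.functor}, $\psi$ preserves duals, traces, and the functoriality of pairings, and by Proposition \ref{p.sp} it is compatible with proper pushforward. Applying $\psi$ to $\tr(e,w)$ therefore produces $\tr(\psi(e,w))$, the trace of the specialized endomorphism over $s$; since $\psi(X,L)=(X_s,\Psi_X L)$, this class is supported on the special fibre of the fixed-point scheme, namely $\mathrm{Fix}(g_s)\ttimes_s\eta$. This is exactly the specialization identity $\spe\langle u,v\rangle=\langle\psi u,\psi v\rangle$ established above, applied to the correspondences $u,v$ that cut out the trace of $(e,w)$.

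Finally, the hypothesis that $g_s$ fixes no point of $X_s$ means $\mathrm{Fix}(g_s)=\varnothing$, so the class over $s$ lies in the zero group $H^0(\varnothing,-)$ and its pushforward to $s$ vanishes. Because pushforward commutes with $\psi$ (Proposition \ref{p.sp}), it follows that $\spe\tr(g,R\Gamma(X_\eta,\Lambda))=0$; and since $\spe\colon H^0(\eta,\Lambda)\to H^0(s\ttimes_s\eta,\Lambda)$ is an isomorphism (both groups being canonically $\Lambda$), I would conclude $\tr(g,R\Gamma(X_\eta,\Lambda))=0$. For the second assertion the same argument applies verbatim with $\Lambda_{X_\eta}$ replaced by $j_!\Lambda_U$ for $j\colon U\hookrightarrow X_\eta$ the open immersion: this sheaf lies in $D_\cft$ (so $(X,j_!\Lambda_U)$ is again dualizable), it carries a canonical endomorphism since $g$ is an isomorphism with $g(U)=U$, and $R\Gamma(X_\eta,j_!\Lambda_U)\simeq R\Gamma_c(U,\Lambda)$ because $f$ is proper; the fixed-point scheme is unchanged, so emptiness of its special fibre forces the trace to vanish.

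The step I expect to be the main obstacle is the first one: correctly encoding $(g,\sigma)$ as a $\sigma$-twisted endomorphism in $\cC_1$ and verifying that its categorical trace localizes on a fixed-point scheme whose special fibre is precisely $\mathrm{Fix}(g_s)$, so that the geometric hypothesis translates into emptiness of the support. Once this localization, together with the specialization compatibility of Proposition \ref{p.sp} and the fact that $\spe$ is an isomorphism on the base point, is in place, the vanishing is formal.
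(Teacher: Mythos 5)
Your plan has a genuine gap at exactly the point you flag as ``the main obstacle,'' and the proposal does not overcome it. The category $\cC_1$ has as morphisms cohomological correspondences \emph{over $S$}: a correspondence is a pair of $S$-morphisms. But $g\colon X\to X$ is not an $S$-morphism --- it covers the nontrivial automorphism $\sigma$ of $S$ --- so its graph is not a correspondence in $\cB_S$ and there is no endomorphism $(e,w)$ of $(X,\Lambda_{X_\eta})$ in $\cC_1$ whose categorical trace computes $\tr(g,R\Gamma(X_\eta,\Lambda))$. Saying that $\sigma$ is ``absorbed into the identification of source and target'' because it fixes $\eta$ and $s$ is not a construction: $\sigma$ fixes these points but $\sigma_\eta$ need not be the identity of $\eta$, so no such identification exists over $S$ or over $\eta$. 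For the same reason one cannot apply the Lefschetz--Verdier formula over $\eta$ to the graph of $g_\eta$. The paper's proof circumvents this by factoring $g$ as $X\xrightarrow{\gamma}\sigma^*X\to X$, where $\gamma$ \emph{is} an $S$-morphism, so that $(c,u)$ with $c=(\gamma,\id_X)$ is a genuine morphism $(\sigma^*X_\eta,\Lambda)\to(X_\eta,\Lambda)$ in $\cC_1$; it then specializes via $\psi$ and closes the loop only over $s$, where $\sigma$ fixing $s$ gives $(\sigma^*X)_s\simeq X_s$ and the extra correspondence $((\id_{X_s},\id_{X_s}),\sigma)$. The trace is thus never formed over $S$ or $\eta$: the action of $g$ on $R\Gamma(X_\eta,\Lambda)$ is transported to an endomorphism of $R\Gamma(X_s,\Psi_X\Lambda)$ (using properness of $f$ and Proposition \ref{p.sp}), and the classical Lefschetz--Verdier formula \emph{over the field $s$} localizes the trace on $X_s^{g_s}=\varnothing$. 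In the special case $\sigma=\id$ your route (form the trace over $\eta$, push its local terms forward, then specialize them using $\spe\langle u,v\rangle=\langle\psi u,\psi v\rangle$ and injectivity of $\spe$ on the base) would be a legitimate variant; for general $\sigma$ the twist must be handled as above.

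Two smaller points. First, even where your argument is salvageable, note that the relevant vanishing comes from the \emph{special fibre} of the fixed-point locus being empty; the fixed locus of $g_\eta$ itself may be nonempty, so you cannot conclude anything before specializing --- you have this right, but it is worth stating that the class $\tr(w)$ over $\eta$ is a priori nonzero. Second, for the statement about $R\Gamma_c(U,\Lambda)$ the paper does not rerun the categorical argument with $j_!\Lambda_U$; it uses additivity, $\tr(g,R\Gamma_c(U,\Lambda))=\tr(g,R\Gamma(X_\eta,\Lambda))-\tr(g,R\Gamma(Z_\eta,\Lambda))$ with $Z$ the closure of $X_\eta\setminus U$ in $X$, and applies the first assertion to $X$ and to $Z$. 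Your alternative via $j_!\Lambda_U$ is plausible in outline but inherits the same twisting problem, whereas the additivity argument is immediate once the first assertion is proved.
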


\begin{proof}
For completeness, we recall the arguments of \cite[Corollaire
7.5.2]{Vidal1}. We may assume $\Lambda=\Z/m\Z$. We decompose the commutative
diagram into
\[\xymatrix{X\ar[r]^\gamma\ar[rd]_g &\sigma^*X\ar[r]\ar[d]^\sigma & S\ar[d]^\sigma\\
& X\ar[r]^f & S.}
\]
Consider the cohomological correspondences
$((\id_{X_s},\id_{X_s}),\sigma)\colon (X_s,\Psi_{X}\Lambda)\to
(X_s,\Psi_{\sigma^* X}\Lambda)$ and $(c_\eta,u)\colon
(\sigma^*X_\eta,\Lambda)\to (X_\eta,\Lambda)$, where $c=(\gamma,\id_{X})$
and $u=\id_{\Lambda_{X_\eta}}$. We have a commutative diagram
\[\xymatrix{R\Gamma(X_\eta,\Lambda)\ar[r]^\sigma\ar[d]_{\simeq} & R\Gamma(\sigma^*X_\eta,\Lambda)\ar[d]^\simeq\ar[r]^{f_{\eta*}^\sharp u} &
R\Gamma(X_\eta,\Lambda)\ar[d]^\simeq\\
R\Gamma(X_s,\Psi_X \Lambda)\ar[r]^{\sigma} & R\Gamma(X_s,\Psi_{\sigma^*X} \Lambda)\ar[r]^{f_{s*}^\sharp \psi u} & R\Gamma(X_s,\Psi_X \Lambda)}
\]
where the square on the right commutes by Proposition \ref{p.sp}. The
composite of the upper horizontal arrows is the action of $g$. Thus, by the
Lefschetz-Verdier formula over $s$, we have
\[\tr(g,R\Gamma(X_\eta,\Lambda))=\int_{X_s^{g_s}} \langle \sigma,\psi u\rangle=0,\]
where $\int_{F}\colon H^0(F,K_{F})\to \Lambda$ denotes the trace map. For
the last assertion of the corollary, it suffices to note that
\[\tr(g,R\Gamma_c(U,\Lambda))=\tr(g,R\Gamma(X_\eta,\Lambda))-\tr(g,R\Gamma(Z_\eta,\Lambda))=0,\]
where $Z$ is the closure of $X_\eta\backslash U$ in $X$, equipped with the
reduced subscheme structure.
\end{proof}

\subsection*{Financial support}
This work was partially supported by National Key Research and Development
Program of China (grant number 2020YFA0712600), National Natural Science
Foundation of China (grant numbers 12125107, 11822110, 11688101, 11621061);
Beijing Natural Science Foundation (grant number 1202014), Fundamental
Research Funds for Central Universities of China, and China Scholarship
Council.

\begin{bibdiv}
\begin{biblist}
\bib{SGA1}{collection}{
    label={SGA1},
   title={Rev\^etements \'etales et groupe fondamental (SGA 1)},
   language={French},
   series={Documents Math\'ematiques (Paris) [Mathematical Documents (Paris)]},
   volume={3},
   note={S\'eminaire de g\'eom\'etrie alg\'ebrique du Bois Marie 1960--61.
   [Algebraic Geometry Seminar of Bois Marie 1960-61];
   Directed by A. Grothendieck;
   With two papers by M. Raynaud;
   Updated and annotated reprint of the 1971 original [Lecture Notes in
   Math., 224, Springer, Berlin;  MR0354651 (50 \#7129)]},
   publisher={Soci\'et\'e Math\'ematique de France, Paris},
   date={2003},
   pages={xviii+327},
   isbn={2-85629-141-4},
   review={\MR{2017446}},
}

\bib{SGA4}{book}{
   label={SGA4},
   title={Th\'eorie des topos et cohomologie \'etale des sch\'emas},
   series={Lecture Notes in Mathematics, Vol. 269, 270, 305},
   note={S\'eminaire de G\'eom\'etrie Alg\'ebrique du Bois-Marie 1963--1964
   (SGA 4);
   Dirig\'e par M. Artin, A. Grothendieck, et J. L. Verdier. Avec la
   collaboration de N. Bourbaki, P. Deligne et B. Saint-Donat},
   publisher={Springer-Verlag},
   place={Berlin, 1972--1973},
   review={\MR{0354652 (50 \#7130)}},
   review={\MR{0354653 (50 \#7131)}},
   review={\MR{0354654 (50 \#7132)}},
}

\bib{SGA5}{book}{
    label={SGA5},
        title={Cohomologie $l$-adique et fonctions $L$},
        language={French},
        series={Lecture Notes in Mathematics},
        volume={589},
        note={S\'eminaire de G\'eometrie Alg\'ebrique du Bois-Marie 1965--1966
   (SGA 5). Dirig\'e par A. Grothendieck, avec la collaboration de I. Bucur, C. Houzel, L. Illusie, J.-P. Jouanolou et J.-P. Serre},
        publisher={Springer-Verlag},
        place={Berlin},
        date={1977},
        isbn={3-540-08248-4},
        review={\MR{0491704 (58 \#10907)}},
}

\bib{BB}{article}{
   author={Be\u\i linson, A.},
   author={Bernstein, J.},
   title={A proof of Jantzen conjectures},
   conference={
      title={I. M. Gel\cprime fand Seminar},
   },
   book={
      series={Adv. Soviet Math.},
      volume={16},
      publisher={Amer. Math. Soc., Providence, RI},
   },
   date={1993},
   pages={1--50},
   review={\MR{1237825}},
}

\bib{BN}{article}{
    author={Ben-Zvi, David},
    author={Nadler, David},
    title={Nonlinear traces},
    book = {
      title = {Derived algebraic geometry},
      isbn = {978-2-85629-938-8},
      series={Panoramas et Synth\`eses},
      volume={55},
      publisher = {Soc. Math. France, Paris},
    },
 date = {2021},
 pages = {39--84},
}

\bib{BS}{article}{
   author={Bhatt, Bhargav},
   author={Scholze, Peter},
   title={The pro-\'{e}tale topology for schemes},
   language={English, with English and French summaries},
   journal={Ast\'{e}risque},
   number={369},
   date={2015},
   pages={99--201},
   issn={0303-1179},
   isbn={978-2-85629-805-3},
   review={\MR{3379634}},
}

\bib{BG}{article}{
   author={Braverman, A.},
   author={Gaitsgory, D.},
   title={Geometric Eisenstein series},
   journal={Invent. Math.},
   volume={150},
   date={2002},
   number={2},
   pages={287--384},
   issn={0020-9910},
   review={\MR{1933587}},
   doi={10.1007/s00222-002-0237-8},
}

\bib{SGA4d}{book}{
   author={Deligne, P.},
   title={Cohomologie \'etale},
   series={Lecture Notes in Mathematics, Vol. 569},
   note={S\'eminaire de G\'eom\'etrie Alg\'ebrique du Bois-Marie SGA
   4$\frac{1}{2}$.
   Avec la collaboration de J. F. Boutot, A. Grothendieck, L. Illusie et J.
   L. Verdier},
   publisher={Springer-Verlag},
   place={Berlin},
   date={1977},
   review={\MR{0463174 (57 \#3132)}},
}

\bib{DP}{article}{
   author={Dold, Albrecht},
   author={Puppe, Dieter},
   title={Duality, trace, and transfer},
   conference={
      title={Proceedings of the International Conference on Geometric
      Topology },
      address={Warsaw},
      date={1978},
   },
   book={
      publisher={PWN, Warsaw},
   },
   date={1980},
   pages={81--102},
   review={\MR{656721}},
}

\bib{FS}{article}{
    author={Fargues, Laurent},
    author={Scholze, Peter},
    title={Geometrization of the local Langlands correspondence},
    note={arXiv:2102.13459},
    date={2021},
}

\bib{Fu}{book}{
   author={Fu, Lei},
   title={Etale cohomology theory},
   series={Nankai Tracts in Mathematics},
   volume={14},
   edition={Revised edition},
   publisher={World Scientific Publishing Co. Pte. Ltd., Hackensack, NJ},
   date={2015},
   pages={x+611},
   isbn={978-981-4675-08-6},
   review={\MR{3380806}},
   doi={10.1142/9569},
}

\bib{KW}{article}{
    author={Hansen, David},
    author={Kaletha, Tasho},
    author={Weinstein, Jared},
    title={On the Kottwitz conjecture for local shtuka spaces},
    note={arXiv:1709.06651v3},
    year={2021},
}

\bib{HSS}{article}{
   author={Hoyois, Marc},
   author={Scherotzke, Sarah},
   author={Sibilla, Nicol\`o},
   title={Higher traces, noncommutative motives, and the categorified Chern
   character},
   journal={Adv. Math.},
   volume={309},
   date={2017},
   pages={97--154},
   issn={0001-8708},
   review={\MR{3607274}},
   doi={10.1016/j.aim.2017.01.008},
}

\bib{Huber}{book}{
   author={Huber, Roland},
   title={\'{E}tale cohomology of rigid analytic varieties and adic spaces},
   series={Aspects of Mathematics, E30},
   publisher={Friedr. Vieweg \& Sohn, Braunschweig},
   date={1996},
   pages={x+450},
   isbn={3-528-06794-2},
   review={\MR{1734903}},
   doi={10.1007/978-3-663-09991-8},
}

\bib{Illusie}{article}{
   author={Illusie, Luc},
   title={Autour du th\'eor\`eme de monodromie locale},
   language={French},
   note={P\'eriodes $p$-adiques (Bures-sur-Yvette, 1988)},
   journal={Ast\'erisque},
   number={223},
   date={1994},
   pages={9--57},
   issn={0303-1179},
   review={\MR{1293970}},
}

\bib{IZ}{article}{
   author={Illusie, Luc},
   title={Around the Thom-Sebastiani theorem, with an appendix by Weizhe
   Zheng},
   journal={Manuscripta Math.},
   volume={152},
   date={2017},
   number={1-2},
   pages={61--125},
   issn={0025-2611},
   review={\MR{3595371}},
   doi={10.1007/s00229-016-0852-0},
}

\bib{ILO}{collection}{
   title={Travaux de Gabber sur l'uniformisation locale et la cohomologie
   \'etale des sch\'emas quasi-excellents},
   language={French},
   editor={Illusie, Luc},
   editor={Laszlo, Yves},
   editor={Orgogozo, Fabrice},
   note={S\'eminaire \`a l'\'Ecole Polytechnique 2006--2008. [Seminar of the
   Polytechnic School 2006--2008];
   With the collaboration of Fr\'ed\'eric D\'eglise, Alban Moreau, Vincent
   Pilloni, Michel Raynaud, Jo\"el Riou, Beno\^\i t Stroh, Michael Temkin and
   Weizhe Zheng;
   Ast\'erisque No. 363-364 (2014)},
   publisher={Soci\'et\'e Math\'ematique de France, Paris},
   date={2014},
   pages={i--xxiv and 1--625},
   issn={0303-1179},
   review={\MR{3309086}},
}

\bib{LZdual}{article}{
   author={Lu, Qing},
   author={Zheng, Weizhe},
   title={Duality and nearby cycles over general bases},
   journal={Duke Math. J.},
   volume={168},
   date={2019},
   number={16},
   pages={3135--3213},
   issn={0012-7094},
   review={\MR{4027830}},
   doi={10.1215/00127094-2019-0057},
}

\bib{Org}{article}{
   author={Orgogozo, Fabrice},
   title={Modifications et cycles proches sur une base g\'en\'erale},
   language={French},
   journal={Int. Math. Res. Not.},
   date={2006},
   pages={Art. ID 25315, 38},
   issn={1073-7928},
   review={\MR{2249998}},
   doi={10.1155/IMRN/2006/25315},
}

\bib{Petit}{article}{
   author={Petit, Fran\c{c}ois},
   title={The Lefschetz-Lunts formula for deformation quantization modules},
   journal={Math. Z.},
   volume={273},
   date={2013},
   number={3-4},
   pages={1119--1138},
   issn={0025-5874},
   review={\MR{3030692}},
   doi={10.1007/s00209-012-1046-4},
}

\bib{Saito}{article}{
   author={Saito, Takeshi},
   title={The characteristic cycle and the singular support of a
   constructible sheaf},
   journal={Invent. Math.},
   volume={207},
   date={2017},
   number={2},
   pages={597--695},
   issn={0020-9910},
   review={\MR{3595935}},
   doi={10.1007/s00222-016-0675-3},
}

\bib{Var}{article}{
   author={Varshavsky, Yakov},
   title={Lefschetz-Verdier trace formula and a generalization of a theorem
   of Fujiwara},
   journal={Geom. Funct. Anal.},
   volume={17},
   date={2007},
   number={1},
   pages={271--319},
   issn={1016-443X},
   review={\MR{2306659}},
   doi={10.1007/s00039-007-0596-9},
}

\bib{Vidal1}{article}{
   author={Vidal, Isabelle},
   title={Th\'eorie de Brauer et conducteur de Swan},
   language={French, with French summary},
   journal={J. Algebraic Geom.},
   volume={13},
   date={2004},
   number={2},
   pages={349--391},
   issn={1056-3911},
   review={\MR{2047703}},
}

\bib{XZ}{article}{
    author={Xiao, Liang},
    author={Zhu, Xinwen},
    title={Cycles on Shimura varieties via geometric Satake},
    note={arXiv:1707.05700v1},
    date={2017},
}

\bib{YZ}{article}{
   author={Yang, Enlin},
   author={Zhao, Yigeng},
   title={On the relative twist formula of $\ell$-adic sheaves},
   journal={Acta Math. Sin. (Engl. Ser.)},
   volume={37},
   date={2021},
   number={1},
   pages={73--94},
   issn={1439-8516},
   review={\MR{4204536}},
   doi={10.1007/s10114-019-8356-y},
}

\bib{six}{article}{
   author={Zheng, Weizhe},
   title={Six operations and Lefschetz-Verdier formula for Deligne-Mumford
   stacks},
   journal={Sci. China Math.},
   volume={58},
   date={2015},
   number={3},
   pages={565--632},
   issn={1674-7283},
   review={\MR{3319927}},
   doi={10.1007/s11425-015-4970-z},
}
\end{biblist}
\end{bibdiv}
\end{document}